\documentclass[12pt]{article}
\usepackage{amsmath,amssymb,amsthm}
\usepackage{graphicx}
\usepackage{hyperref}
\newtheorem{theorem}{Theorem}[section]
\newtheorem{lemma}[theorem]{Lemma}
\newtheorem{proposition}[theorem]{Proposition}

\newtheorem{problem}[theorem]{Problem}
\theoremstyle{definition}
\newtheorem{definition}[theorem]{Definition}

\def\bN{\mathbb{N}}

\def\bZ{\mathbb{Z}}
\def\cA{\mathcal{A}}

\def\cG{\mathcal{G}}

\def\cI{\mathcal{I}}
\def\cK{\mathcal{K}}

\def\al{\alpha}

\def\ga{\gamma}
\def\de{\delta}
\def\eps{\varepsilon}

\def\om{\omega}
\def\si{\sigma}

\def\Om{\Omega}

\def\subst{\mathsf{s}}
\def\TM{\subst_{\mathrm{TM}}}
\def\pd{\subst_{\mathrm{pd}}}
\def\sub{\subst_{\mathrm{Mu}}}
\def\xiTM{\xi_{\mathrm{TM}}}
\def\xipd{\xi_{\mathrm{pd}}}
\def\xiGE{\xi_{\mathrm{Mu}}}

\def\Homeo{\mathop{\mathrm{Homeo}}}
\def\Sch{\mathop{\mathrm{Sch}}}
\def\id{\mathrm{id}}
\def\0{\mathbf{0}}
\title{Thue-Morse sequence and groups of intermediate growth}
\author{Rostislav Grigorchuk \and Yaroslav Vorobets}
\date{}
\begin{document}

\maketitle

\begin{abstract}
We consider the substitution subshift generated by the Thue-Morse 
substitution $0\to01$, $1\to10$.  We prove that the topological full group 
of the subshift contains a subgroup of intermediate growth.  Namely, one 
group from the family known as the Grigorchuk groups embeds into this group.

To obtain our main result, we prove an embedding theorem for topological 
full groups, and also develop a technique to prove isomorphism of groups 
using the Schreier graphs.
\end{abstract}

\section{Introduction}\label{sect-intro}

There are many topics where the interests of theory of dynamical systems 
and algebra converge.  The interconnection between these two mathematical 
disciplines goes both ways.  Theory of dynamical systems is looking for 
algebraic invariants to use in various classification problems.  Besides, a 
lot of dynamical systems are of algebraic origin (see \cite{Schmidt, 
LindSchmidt}).  On the other hand, algebra (first of all, group theory) 
uses ideas and methods of theory of dynamical systems for new constructions 
and for solving old problems as demonstrated, for instance, in the recent 
book by V. Nekrashevych \cite{Nek2022}.

This paper is centered around one specific topic on the intersection of 
theory of dynamical systems with group theory, namely, the topological full 
groups associated with Cantor minimal systems.

The \emph{Cantor minimal systems} are classical objects of study in 
topological dynamics.  Such a system $(X,T)$ consists of a Cantor set $X$ 
and a homeomorphism $T:X\to X$, which defines a continuous action of the 
infinite cyclic group $\bZ$ on $X$.  The action is to be minimal in the 
sense that it allows no closed invariant set other than the empty set and 
$X$.  We are interested in systems coming from symbolic dynamics.  Recall 
that the two-sided \emph{shift} over a finite alphabet $\cA$ (of at least 
two symbols) is the transformation $\si$ of the space $\cA^{\bZ}$ of 
bi-infinite sequences of symbols from $\cA$ that acts by the rule 
$\si(\ldots x_{-2}x_{-1}x_0.x_1x_2\ldots)=\ldots 
y_{-2}y_{-1}y_0.y_1y_2\ldots$, where $y_n=x_{n+1}$ for all $n\in\bZ$.
A \emph{subshift} is the restriction of $\si$ to a closed shift-invariant 
set $\Om\subset\cA^{\bZ}$.  Any subshift is uniquely determined by the set 
of all forbidden words (i.e., finite strings of symbols that must not occur 
in elements of $\Om$) or, equivalently, by the set of all admissible 
words.  Two major classes of subshifts are subshifts of finite type and 
minimal subshifts.  They represent two different kinds of dynamic behavior 
(chaotic vs.\@ regular).  A subshift of finite type is determined by 
specifying a finite number of forbidden words.  The minimal subshifts are 
not so easy to construct.  The main source of examples are the substitution 
subshifts generated by primitive substitutions.  A \emph{substitution} over 
an alphabet $\cA$ is a procedure that replaces every symbol from the 
alphabet with a certain word over $\cA$.  Suppose that the substitution 
preserves an infinite sequence $\xi=x_1x_2x_3\ldots$ of symbols from $\cA$, 
and let $\bigl(\Om(\xi),\si\bigr)$ be the subshift for which the admissible 
words are words that occur infinitely often in $\xi$.  Then it is called a 
\emph{substitution subshift}.  The substitution is \emph{primitive} if, 
starting with any symbol and applying the substitution several times, we 
eventually end up with a word that includes all symbols in the alphabet.  
If this is the case then the subshift $\bigl(\Om(\xi),\si\bigr)$ is 
necessarily a Cantor minimal system (see, e.g., the books 
\cite{AlloucheShallit:automatic, Fogg:substitutions, LindMarcus}). 

The \emph{Thue-Morse} substitution $\TM$ over the binary alphabet 
$\{0,1\}$, given by $0\to01$, $1\to10$, is among the simplest and most 
famous substitutions that appear in many areas of mathematics.  Starting 
with $0$ and repeatedly applying the substitution $\TM$, we obtain a 
sequence of finite binary strings $0,01,0110,01101001,\dots$ where each 
string extends the previous one.  This results in an infinite binary string 
$\xiTM=01101001\dots$ invariant under the substitution rule.  The string 
$\xiTM$ is known as the \emph{Thue-Morse sequence}.  Since the substitution 
$\TM$ is primitive, the sequence $\xiTM$ generates a minimal subshift over 
the binary alphabet $\{0,1\}$, called the \emph{Morse subshift}.  One 
instance where the sequence $\xiTM$ appears in algebra is a criterion of
nilpotency of a group in terms of semigroup identities, which uses the idea 
of iteration in the spirit of the Thue-Morse substitution as shown 
implicitly by A. Maltsev \cite{Maltsev} and repeated in an explicit form in 
\cite{BoffaPoint}.  This is also related to the so-called Zimin words (see, 
e.g., the book \cite{Sapir}), which are used in theory of rings.  L. Rowen 
\cite{Rowen:Koethe} suggested that $\xiTM$ may be the key to solving one of 
the most famous problems in theory of rings, the Koethe conjecture.  For 
many more applications of the Thue-Morse sequence in many different 
contexts, see the survey \cite{AlloucheShallit:ubiquitous}.

\begin{figure}[t]
\centerline{\includegraphics[scale=0.8]{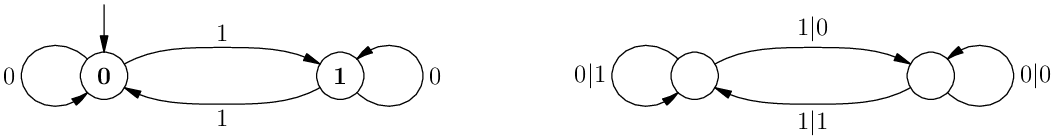}}
\caption{The automaton computing $\xiTM$ and the automaton generating the 
lamplighter group.}
\label{fig-automata}
\end{figure}

Here is another curious connection of the Thue-Morse sequence to group 
theory.  The sequence $\xiTM$ is $2$-automatic, which means there is an 
automaton that, given a number $n$ in its binary form as input, computes 
the $n$-th symbol of $\xiTM$.  If the symbols are counted starting from 
$0$, the task is done by a $2$-state automaton shown in Figure 
\ref{fig-automata} on the left (see \cite{AlloucheShallit:automatic}).  
It is an acceptor-like automaton in which the result is determined by the 
terminal state.  Now let us consider a transducer automaton with the same 
transition machine shown in Figure \ref{fig-automata} on the right.  This 
automaton outputs one symbol per each symbol of input.  Choosing one of the 
states as initial, the automaton generates a transformation on the set of 
binary strings.  The transformation happens to be related to a Gray code.  
Recall that a Gray code is an ordering of numbers $0,1,\dots,2^k-1$ such 
that the binary representations of any two neighbors differ in a single 
digit.  Given a number $n$ as a $k$-digit binary input, the automaton 
outputs the position of $n$ in the Gray code.  The two states of the 
automaton give rise to two transformations, which in turn generate a 
transformation group.  The group turns out to be the lamplighter group, 
i.e., the wreath product of an infinite cyclic group with a group of order 
$2$ (see Theorem 7 in \cite{Classification}).

The notion of a \emph{topological full group} (TFG) is relatively recent.  
Such a group can be associated to any homeomorphism $T:X\to X$, where $X$ 
is a Cantor set.  Informally, the TFG of $T$, denoted $[[T]]$, consists of 
all homeomorphisms of $X$ that locally coincide with elements of the cyclic 
group generated by $T$ (see Definition \ref{def-TFG}).  After the paper 
\cite{GPS} by T. Giordano, I. Putnam and C. Skau, the TFG emerged as an 
algebraic invariant of topological dynamics of the system $(X,T)$ provided 
it is a Cantor minimal system (see Theorem \ref{flip-conjugacy}).

The TFG's of Cantor minimal systems are groups with unique properties.  
Already in \cite{GPS} it was shown that they are indicable (admit a 
homomorphism onto $\bZ$).  The systematic study of algebraic properties of 
groups $[[T]]$ was spearheaded by H. Matui \cite{Matui} who showed that the 
commutator subgroup $[[T]]'$ is simple and, in the case of a minimal 
subshift, finitely generated.  The studies were continued by Matui 
\cite{Matui2}, K. Medynets and the first author \cite{GrigMed,GrigMed2}, K. 
Juschenko and N. Monod \cite{JuschMonod}, and others.  Any finite group and 
the free abelian group of infinite rank embed into $[[T]]$.  If the system 
$(X,T)$ is not topologically conjugate to a subshift, then the group 
$[[T]]$ is locally virtually abelian (i.e., every finitely generated 
subgroup $H\subset [[T]]$ contains an abelian subgroup of finite index in 
$H$).  Otherwise the lamplighter group embeds into $[[T]]'$ (see 
\cite{Matui2}).  The latter implies exponential growth of the group 
$[[T]]'$ as the lamplighter group contains a free semigroup on two 
generators.  Further remarkable properties of TFG's include non-elementary 
amenability \cite{JuschMonod} and factorization of $[[T]]'$ into a product 
of two locally finite subgroups \cite{Matui,GrigMed}.  Neither $[[T]]$ nor 
$[[T]]'$ can be finitely presented but an effective description of the 
system $(X,T)$ may help to obtain an infinite presentation.  For instance, 
if $(X,T)$ is a subshift such that all admissible words form a recursive 
language, then the commutator subgroup $[[T]]'$ allows a recursive 
presentation and, as a consequence, has a decidable word problem (see 
\cite{GrigMed2}).  The maximal subgroups of TFG's were studied by the 
authors in \cite{GrigVor24}.

N. Matte Bon \cite{MatteBon} observed that the TFG of a Cantor minimal 
system may contain a subgroup of intermediate (faster than polynomial but 
slower than exponential) growth.  Namely, each group from a family 
$\cG_\om$, $\om\in\{0,1,2\}^{\bN}$ constructed in \cite{Grig84} embeds into 
the TFG of a minimal subshift.  The growth of $\cG_\om$ is intermediate if 
the sequence $\om$ is not eventually constant.  The groups $\cG_\om$ have  
many other unusual properties.  They are branch, just-infinite groups, most 
of them are torsion groups (that is, every element has finite order).  Each 
group acts transitively on levels of the binary rooted tree, and the 
Schreier graphs of those actions have line-like shapes.  This defines a 
linear order on each level that gives rise to a Gray code (see 
\cite{MatteBon}).  The code is the same for all groups $\cG_\om$ and, 
somehow, it is related to the Gray code computed by the automaton in Figure 
\ref{fig-automata}.  Each group $\cG_\om$ also acts naturally on the 
boundary of the binary rooted tree, and the Schreier graphs associated with 
the orbits of that action also have line-like shapes.  The latter fact was 
used in numerous articles for various purposes, in particular, to initiate 
the study of a joint spectrum of self-similar groups (see \cite{BarGri00}) 
or to construct first examples of simple groups of intermediate growth 
\cite{Nek16}.  Linear structure of the Schreier graphs will be important in 
this paper as well.

Among groups $\cG_\om$, a special role belongs to the groups 
$\cG_{(012)^\infty}$ and $\cG_{(01)^\infty}$ associated to periodic 
sequences $012012\dots$ and $010101\dots$.  Unlike generic groups in the 
family, these two are self-similar groups, namely, each of them naturally 
embeds as a subgroup of finite index into the wreath product of itself with 
the group of order $2$.  The two groups share many properties but there are 
also essential differences.  For example, the first group is torsion while 
the second one has elements of infinite order.  Each maximal subgroup in 
$\cG_{(012)^\infty}$ has finite index while $\cG_{(01)^\infty}$ admits 
maximal subgroups of infinite index.

The Schreier graphs of the group $\cG_{(012)^\infty}$ were studied by the 
second author in \cite{Vor12}.  D. Lenz, T. Nagnibeda and the first author 
observed in \cite{GLN17} that $\cG_{(012)^\infty}$ can be embedded into the 
TFG of the substitution subshift generated by a substitution 
$\subst_{\mathrm{Ly}}$ given by $a\to aca$, $b\to d$, $c\to b$, $d\to c$.  
They used this fact to link the spectral problem for Schreier graphs of 
$\cG_{(012)^\infty}$ to the spectral theory of random Schroedinger 
operators.  The substitution $\subst_{\mathrm{Ly}}$ is not primitive but, 
in fact, the associated subshift coincides with the subshift generated by a 
primitive substitution $a\to ac$, $b\to ad$, $c\to ab$, $d\to ac$ (see 
\cite{AlloucheDQ:hidden}).  Note that the TFG of this subshift is finitely 
generated.  An explicit generating set was found in \cite{Vor20}.

The substitution $\subst_{\mathrm{Ly}}$ occurs naturally in the study of 
the group $\cG_{(012)^\infty}$.  Indeed, the group is generated by four 
elements denoted $a,b,c,d$ and the substitution $\subst_{\mathrm{Ly}}$ 
induces an injective endomorphism of $\cG_{(012)^\infty}$.  This was first 
observed by I. Lysenok \cite{Lys85} who obtained the following  
presentation of the group $\cG_{(012)^\infty}$ by generators and relators:
\[
\bigl\langle a,b,c,d \mid 1=a^2=b^2=c^2=d^2=bcd
=\subst_{\mathrm{Ly}}^k\bigl((ad)^4\bigr)
=\subst_{\mathrm{Ly}}^k\bigl((adacac)^4\bigr), \ k=1,2,\dots \bigr\rangle.
\]
Recursive group presentations of this kind are called 
\emph{$L$-presentations} by analogy with the $L$-systems that are used for 
the study of formal languages and in some other areas of science.  Note 
that the above presentation is optimal in the sense that no relator is 
redundant (as shown in \cite{Grig99}).  Interestingly, this optimality is 
preserved after the profinite completion of the group $\cG_{(012)^\infty}$ 
(as proved by M. G. Benli \cite{Benli}).

Remarkably, the group $\cG_{(01)^\infty}$ also admits an $L$-presentation  
\[
\bigl\langle a,c,d \mid \sub^k(r)=1, \ r\in R, \ k\ge0 \bigr\rangle,
\]
where the set of basic relators $R$ is
\[
\{a^2,(cd)^2,(ad)^4,[d,ac]^2,[d,cac]^2,[d,acaca]^2,
[d,acacac]^2,[d,cacacac]^2\}
\]
and $\sub$ is the substitution given by $a\to aca$, $c\to d$, $d\to c$.  
It was obtained by Y. Muntyan \cite{Muntyan} in his dissertation.  It is 
not known yet whether this presentation is optimal.  We do not need the 
presentation in this paper, but the substitution $\sub$ will be very 
important in what follows.  Just like $\subst_{\mathrm{Ly}}$, the 
substitution $\sub$ is not primitive.  However, as we will show in this 
paper, the associated substitution subshift is topologically conjugate to 
the subshift generated by the \emph{period doubling} substitution $\pd$: 
$x\to xy$, $y\to xx$, which is primitive.  Note that the substitution 
subshifts generated by the substitutions $\subst_{\mathrm{Ly}}$ and $\sub$ 
belong to a family of Toeplitz subshifts studied in \cite{Vor10}.

The main result of this paper adds another item to the list of properties 
of the group $\cG_{(01)^\infty}$ showing its connection to the classical 
Thue-Morse sequence.

\begin{theorem}\label{main}
The group of intermediate growth $\cG_{(01)^\infty}$ embeds into the 
topological full group of the substitution subshift generated by the 
Thue-Morse substitution $\TM$ as well as into the TFG of the substitution 
subshift generated by the period doubling substitution $\pd$.  
\end{theorem}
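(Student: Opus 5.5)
Here is how I would attack Theorem~\ref{main}, using three ingredients. The first realizes $\cG_{(01)^\infty}$ inside the TFG of the subshift generated by the Muntyan substitution $\sub$. The second identifies that subshift with the period doubling subshift. The third embeds the TFG of the period doubling subshift into the TFG of the Morse subshift.

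\textbf{Step 1: $\cG_{(01)^\infty}$ inside $[[\Omega(\xiGE)]]$.} I would work with the action of $\cG_{(01)^\infty}$ on the boundary of the binary tree. The orbital Schreier graphs of a generic point are two-ended lines, with edges labelled by $a$, $c$, $d$ (loops ignored). Reading labels along such a line gives a bi-infinite word over $\{a,c,d\}$. The self-similarity of the group, given by the wreath recursion $a\mapsto$ swap, $c\mapsto(a,d)$, $d\mapsto(1,c)$ for the $(01)^\infty$ pattern, makes the substitution $\sub$ describe how the Schreier graph of level $n+1$ is obtained from that of level $n$. This is the standard mechanism, as for $\subst_{\mathrm{Ly}}$ in \cite{GLN17}. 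So the label sequences of generic orbits are exactly the points of $\Omega(\xiGE)$.

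Each generator then acts on a labelled line by moving one step left or right, or staying put, according to the labels next to the current position. That is a locally constant power of the shift, so each generator defines an element of $[[\si]]$ on $\Omega(\xiGE)$. Faithfulness follows because the action on generic orbits is conjugate, via Schreier graph isomorphisms, to the faithful action on the boundary. The paper's announced technique of proving isomorphism of groups via Schreier graphs is presumably exactly what certifies this. The hard part is handling the countably many exceptional (one-ended or cofinal) orbits; I would avoid them by working only with a dense set of generic points and using continuity.

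\textbf{Step 2: $\Omega(\xiGE)\cong\Omega(\xipd)$.} Every other letter of $\xiGE$ is $a$, because $\sub$ sends $a\mapsto aca$ and $c,d$ map to single letters. So the word looks like $a\,z_1\,a\,z_2\,a\ldots$ with $z_i\in\{c,d\}$, and the sequence $(z_i)$ satisfies a period-doubling-type recursion. I would check that $(z_i)$ is, after renaming $c,d$ to $x,y$, a substitution fixed point of $\pd$ or of its square. Compressing $aZ$ to the single letter $Z$ then gives a conjugacy between $\si^2$ on $\Omega(\xiGE)$ and $\si$ on $\Omega(\xipd)$. Since $\Omega(\xiGE)$ is the tower of height two over this, its TFG contains elements acting as in Step 1. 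Rather than argue through $\si^2$, I would directly set up a sliding block code of radius one between $\Omega(\xiGE)$ with its full shift and a subshift conjugate to $\Omega(\xipd)$. TFGs are conjugacy invariants, so the image transfers.

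\textbf{Step 3: period doubling into Thue--Morse.} The map $x_n\mapsto x_n+x_{n+1}\pmod 2$ is a two-to-one factor map from the Morse subshift onto the period doubling subshift. I would use the paper's embedding theorem for topological full groups, which I expect says that a finite-to-one factor $\pi:(X,T)\to(Y,S)$ with suitable local structure lets each $g\in[[S]]$ lift to $\tilde g\in[[T]]$ with the same cocycle $n_g\circ\pi$. The lift is well defined because $T^{n}$ commutes with $\pi$. The assignment $g\mapsto\tilde g$ is a homomorphism, since cocycles compose. It is injective because $\pi$ is surjective and the cocycle determines the element.

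I expect the main obstacle to be Step 1, specifically the faithfulness and identification argument. Steps 2 and 3 are explicit symbolic computations plus the general lifting lemma.
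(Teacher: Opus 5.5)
Your architecture matches the paper's. Step~3 is Theorem~\ref{TFG-embeds}, and your $x_n+x_{n+1}$ map is $f_{\phi_1}$. The hypothesis that matters there is aperiodicity, which makes the cocycle $n_g$ unique and hence $g\mapsto\tilde g$ well defined and multiplicative; finite-to-one-ness plays no role.

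The step that does not go through as written is Step~2, and both halves of the theorem rest on it, since the Thue--Morse half passes through $\Om(\xipd)$. Compressing $aZ\mapsto Z$ does not conjugate $\si^2$ on $\Om(\xiGE)$ with $\si|\Om(\xipd)$. Indeed, $\si^2$ on $\Om(\xiGE)$ is not even minimal, since it preserves the clopen set $[.a]_{\{a,c,d\}}\cap\Om(\xiGE)$. What the compression gives is a conjugacy of $\si^2$ restricted to that set, i.e. it shows that $\Om(\xiGE)$ is a height-two tower over $\Om(\xipd)$. A tower structure embeds the TFG of the base into the TFG of the tower, which is the wrong direction for you. Your ``sliding block code to a subshift conjugate to $\Om(\xipd)$'' then leaves the actual conjugacy unspecified.

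The paper uses the one-block map $a,d\mapsto x$, $c\mapsto y$. By Lemmas~\ref{xi-pd} and~\ref{xi-GE} it carries $\xiGE$ letter by letter onto $\xipd$, hence $\Om(\xiGE)$ onto $\Om(\xipd)$. It is injective on $\Om(\xiGE)$ because $xxxx$ never occurs in $\xipd$ (Lemma~\ref{block-factor-2}):
\begin{itemize}
\item every image contains a $y$, which can only come from a $c$;
\item this fixes the parity class of the positions of $a$;
\item after that, $a$ and $d$ are told apart by parity.
\end{itemize}
Alternatively, you can complete your tower argument. Note that $\Om(\xipd)$ is itself a height-two tower over the letter-swapped copy of $\Om(\xipd)$, because $\xipd$ has $x$ in every odd position and its even-indexed subsequence is $\xipd$ with $x,y$ exchanged.

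Step~1 is a genuinely different and valid route to Proposition~\ref{inj-homomorph}, but it is underargued. The paper uses exceptional orbits on both sides. One is the one-ended orbit of $1^\infty$, whose Schreier graph is identified exactly as $\Gamma_{\{a,c,d\}}(\xiGE)$ (Lemma~\ref{Schr-half-line}). The other is the line through $\overleftarrow{\xiGE}d.\xiGE$ in $\Om(\xiGE)$. Since a half-line is not a line, the paper then needs Lemmas~\ref{Sch-line-halfline} and~\ref{GE-line-halfline}: kernel elements for the half-line move only finitely many points of the line, and a homeomorphism of a Cantor set fixing all but finitely many points of a dense orbit is trivial.

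You instead match one two-ended orbit $O$ of $\cG_{(01)^\infty}$ with the $\si$-orbit of its label sequence $\om$. The two kernels in $\cI_{\{a,c,d\}}$ then coincide at once, both actions being faithful by density, and the half-line/line comparison disappears. The price is that you must prove $\om\in\Om(\xiGE)$, and ``the standard mechanism'' is not a proof. It does go through with the paper's tools:
\begin{itemize}
\item the prefix map from $O$ to words of length $k$ is a covering of $\Gamma_{\{a,c,d\}}\bigl(\sub^{k-1}(a)\bigr)$ (Lemma~\ref{Schr-level});
\item a two-sided version of Lemma~\ref{half-line-covers} gives $\om=\ldots w\,a_0\,w\,a_1\,w\ldots$ with $w=\sub^{k-1}(a)$ a palindrome and $a_i\in\{c,d\}$;
\item both $wcw$ and $wdw$ occur in $\sub^{k+1}(a)$.
\end{itemize}

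Also, ``exactly the points of $\Om(\xiGE)$'' is too strong. The $\si$-orbits of $\overleftarrow{\xiGE}c.\xiGE$ and $\overleftarrow{\xiGE}d.\xiGE$, which unfold the half-line at $1^\infty$, are realized by no orbit. So the density you need on the subshift side must come from minimality of $\Om(\xiGE)$. That is not automatic, because $\sub$ is not primitive; the paper proves it in Proposition~\ref{3-subshifts-minimal}, and it also follows from Step~2. Together with $\Om(\xiGE)$ being infinite, minimality also makes $\om$ non-periodic, as Lemma~\ref{Schr-subshift} requires.
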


The embeddings in Theorem \ref{main} can be described explicitly (see 
Proposition \ref{two-more-inj} at the end of the paper).

The proof of Theorem \ref{main} relies on the following general result.

\begin{theorem}\label{main0}
Let $(X_1,T_1)$ and $(X_2,T_2)$ be aperiodic Cantor systems.  If 
$(X_2,T_2)$ is a continuous factor of the system $(X_1,T_1)$ then the TFG 
$[[T_2]]$ embeds into $[[T_1]]$.
\end{theorem}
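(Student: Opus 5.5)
Let $\pi:X_1\to X_2$ be a continuous surjection with $\pi\circ T_1=T_2\circ\pi$. The natural plan is to lift elements of $[[T_2]]$ through $\pi$ using their cocycles. By the definition of the topological full group (Definition~\ref{def-TFG}), each $g\in[[T_2]]$ has a continuous cocycle $n_g:X_2\to\bZ$ with $g(y)=T_2^{n_g(y)}(y)$ for all $y\in X_2$. Because $T_2$ is aperiodic, the orbit map $n\mapsto T_2^n(y)$ is injective, so $n_g$ is uniquely determined by $g$. Continuity of $n_g$ on the compact set $X_2$ means it takes finitely many values on a finite partition into clopen sets. I will define
\[
\Phi(g)(x)=T_1^{\,n_g(\pi(x))}(x),\qquad x\in X_1 .
\]
The function $n_g\circ\pi$ is continuous and finitely valued on $X_1$, so $\Phi(g)$ is a continuous map of $X_1$ that is piecewise a power of $T_1$ on a clopen partition.

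Next I check that $\Phi$ is a homomorphism. Aperiodicity gives the cocycle identity $n_{gh}(y)=n_g(h(y))+n_h(y)$, again by uniqueness of exponents. Equivariance gives $\pi(\Phi(h)(x))=T_2^{n_h(\pi x)}\pi(x)=h(\pi(x))$. Therefore
\[
\Phi(g)\Phi(h)(x)=T_1^{\,n_g(h(\pi x))+n_h(\pi x)}(x)=T_1^{\,n_{gh}(\pi x)}(x)=\Phi(gh)(x).
\]
Since $n_{\id}=0$, we get $\Phi(\id)=\id$. Hence $\Phi(g^{-1})$ is a two-sided inverse of $\Phi(g)$, so $\Phi(g)$ is a homeomorphism and lies in $[[T_1]]$.

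Finally, injectivity. Suppose $\Phi(g)=\id$. Then $T_1^{n_g(\pi x)}(x)=x$ for every $x\in X_1$. Aperiodicity of $T_1$ forces $n_g(\pi x)=0$, and surjectivity of $\pi$ then gives $n_g\equiv0$, so $g=\id$.

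The main point needing care is the well-definedness of the cocycle, which is exactly where aperiodicity of $T_2$ is used. Without it, $n_g$ would be ambiguous on periodic points and the cocycle identity could fail. Aperiodicity of $T_1$ is needed only for injectivity. Nothing else in the argument is delicate; note that minimality of either system plays no role.
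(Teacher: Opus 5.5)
Your proof is correct and is essentially the paper's argument: the paper constructs the same embedding $g\mapsto\bigl(x\mapsto T_1^{n_g(\pi(x))}(x)\bigr)$, but packages it through Putnam's monoid $(C(X,\bZ),\oplus_T)$, whose invertible elements are identified with $[[T]]$ via $\ell\mapsto h_{T,\ell}$ when $T$ is aperiodic, and then uses the pullback $\ell\mapsto\ell\circ f$. You instead verify the cocycle identity and the invertibility of $\Phi(g)$ (via $\Phi(g^{-1})$) directly. As a small refinement of your closing remark, aperiodicity of $T_1$ is not actually an extra assumption: it follows from that of $T_2$ through the factor map, and injectivity also follows directly from $\pi\circ\Phi(g)=g\circ\pi$ together with surjectivity of $\pi$.
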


Given a factor map $f:X_1\to X_2$, the induced embedding 
$\mathcal{E}_f:[[T_2]]\to[[T_1]]$ in Theorem \ref{main0} can 
be described explicitly.  Namely, every homeomorphism $S_2\in[[T_2]]$ is 
given by $S_2(y)=T_2^{\nu(y)}(y)$, $y\in X_2$ for some continuous function 
$\nu:X_2\to\bZ$.  Then the homeomorphism $S_1=\mathcal{E}_f(S_2)$ is given 
by $S_1(x)=T_1^{\nu(f(x))}(x)$, $x\in X_1$.

Theorem \ref{main} inspires a number of questions and topics to explore.  
We would like to formulate some of them.

\begin{problem}
Suppose $T_{\subst}$ is a substitution subshift generated by a primitive 
substitution $\subst$.
\begin{itemize}
\item[(i)]
Under what condition on the substitution $\subst$ the topological full  
group $[[T_{\subst}]]$ of the subshift $T_{\subst}$ contains a group of 
intermediate growth?
\item[(ii)]
Under what conditions on $\subst$ the group $[[T_{\subst}]]$ contains a 
subgroup of Burnside type (i.e., finitely generated infinite torsion group)?
\end{itemize}
In particular, what can be said about the TFG of the subshift generated by 
the Fibonacci substitution $0\to01$, $1\to0$?  Or the Rudin-Shapiro 
substitution $a\to ab$, $b\to ac$, $c\to db$, $d\to dc$?  Or the Chacon 
substitution $0\to 0012$, $1\to 12$, $2\to 012$?
\end{problem}

There are other remarkable groups that have finite $L$-presentations.  One 
example is the iterated monodromy group (IMG) of the polynomial $z^2+i$.  
This group first appeared in \cite{BarGrigNek} and then in many other 
articles including \cite{GrigSavSun} where a finite $L$-presentation for  
$\mathrm{IMG}(z^2+i)$ was found.  It is quite similar to the above 
presentation of $\cG_{(01)^\infty}$ but uses a substitution $a\to b$, $b\to 
c$, $c\to aba$.  This substitution preserves no infinite sequence.  
Instead, there are three sequences cyclically permuted by the 
substitution.  Each of the three sequences generates the same minimal 
subshift.  Note that the group $\mathrm{IMG}(z^2+i)$ acts naturally on the 
boundary of a rooted binary tree but, unlike the groups $\cG_\om$, the 
Schreier graphs of orbits are not of linear shape.

\begin{problem}\label{problem-IMG}
Does the group $\mathrm{IMG}(z^2+i)$ embed into the TFG of a minimal Cantor 
system?  Does it embed into the TFG of a substitution subshift generated by 
a primitive substitution?
\end{problem}

We should also mention that connections between the Thue-Morse substitution 
and self-similar groups were previously explored in the paper 
\cite{Barth:Thue-Morse} where the substitution $\TM$ was used to construct 
a certain self-similar group that turned out to be the IMG of the rational 
map $f(z)=\bigl(z-\frac12 z^2\bigr)^{-1}$.  Note that this group can be 
generated by a $3$-state automaton and, in fact, it is featured in the 
classification \cite{Classification} of such groups (see Automaton no.\@ 
2853 in \cite{Classification}).

The paper is organized as follows.  In Section \ref{sect-sub} we recall 
basic facts about substitution subshifts, discuss a few specific examples 
(including the subshift generated by the Thue-Morse substitution) and 
establish relations between them (Proposition \ref{3-subshifts-factors}).  
For the reader's convenience, this section is self-contained though some 
material in it is known and can be found in textbooks on symbolic dynamics.
In Section \ref{sect-TFG} we consider topological full groups associated
with aperiodic homeomorphisms of a Cantor set and prove the embedding 
Theorem \ref{main0} (Theorem \ref{TFG-embeds}).  This allows to determine 
at the end of the section a certain group generated by three involutive 
homeomorphisms that has intermediate growth and embeds into the TFG of the 
Thue-Morse subshift (Theorem \ref{sub-main}).  In Section \ref{sect-growth} 
we describe the family $\cG_\om$, $\om\in\{0,1,2\}^{\bN}$ of groups of 
intermediate growth constructed in \cite{Grig84} and single out the group 
$\cG_{(01)^\infty}$ as the one that is isomorphic to the group described in 
Section \ref{sect-TFG} (Proposition \ref{inj-homomorph}).  In Section 
\ref{sect-Schr} we consider the Schreier graphs of groups actions.  We 
study the Schreier graphs with linear structure and use them to prove 
Proposition \ref{inj-homomorph}.  Then we prove Theorem \ref{sub-main} and 
the main Theorem \ref{main}.  The section is concluded with the proof of 
Proposition \ref{two-more-inj}.

\medskip

\emph{Acknowledgements}.
The first author is supported by the Travel Support for Mathematicians 
grant MP-TSM-00002045 from the Simons Foundation.

\section{Substitution subshifts}\label{sect-sub}

An \emph{alphabet} $\cA$ is a finite set consisting of more than one 
element.  Elements of $\cA$ are referred to as \emph{letters} or 
\emph{symbols}.  Let $\cA^*$ denote the set of all finite strings 
$x_1x_2\ldots x_n$ of letters from $\cA$ (including the empty one 
$\varnothing$).  We refer to them as \emph{words} over $\cA$ and write 
without any delimiters.  The number of letters in a word is called its 
\emph{length}.  Elements of $\cA$ are identified with words of length $1$.  
For any words $u,w\in\cA^*$ let $uw$ denote their \emph{concatenation}: if 
$u=x_1x_2\ldots x_m$ and $w=y_1y_2\ldots y_n$, where each $x_i$ and $y_j$ 
is a letter, then $uw=x_1x_2\ldots x_my_1y_2\ldots y_n$.  This operation is 
associative and turns $\cA^*$ into a monoid.  Likewise, for any finite list 
of words $w_1,w_2,\dots,w_k$ their concatenation is denoted $w_1w_2\ldots 
w_k$.  The concatenation of $k$ copies of the same word $w$ is denoted 
$w^k$.  We say that a word $u\in\cA^*$ is a \emph{prefix} of a word 
$w\in\cA^*$ if $w=uv$ for some $v\in\cA^*$.  We say that $u$ is a 
\emph{subword} of $w$ if $w=vuv'$ for some $v,v'\in\cA^*$.

For any nonempty set $S$ let $\cA^S$ denote the set of all functions 
$\xi:S\to\cA$.  In the cases $S=\bN$ (positive integers), $S=-\bN$ 
(negative integers), and $S=\bZ$ (all integers), we represent an element 
$\xi\in\cA^S$ as respectively an infinite string $x_1x_2x_3\ldots$, a 
left-infinite string $\ldots x_{-3}x_{-2}x_{-1}$, and a bi-infinite string 
$\ldots x_{-2}x_{-1}x_0.x_1x_2x_3\ldots$, where $x_i=\xi(i)$ for each $i\in 
S$.  Notation for bi-infinite strings includes a dot that serves as a 
reference point.  We say that a nonempty word $w\in\cA^*$ is a 
\emph{subword} of the string $\xi$ if $w=x_nx_{n+1}\ldots x_m$ for some 
integers $n$ and $m$, $n\le m$ such that $\{n,n+1,\dots,m\}\subset S$.

Given a word $w=x_1x_2\ldots x_n\in\cA^*$, an infinite string 
$\eta=y_1y_2\ldots\in\cA^{\bN}$, and a left-infinite string $\zeta=\ldots 
z_2z_1\in\cA^{-\bN}$, we define \emph{concatenations} $w\eta$, $\zeta w$, 
and $\zeta.\eta$ as respectively the infinite string $x_1x_2\ldots x_n 
y_1y_2\ldots$, the left-infinite string $\ldots z_2z_1x_1x_2\ldots x_n$, 
and the bi-infinite string $\ldots z_2z_1.y_1y_2\ldots$.  We say that a 
word $w\in\cA^*$ is a \emph{prefix} of an infinite string $\xi\in\cA^{\bN}$ 
if $\xi=w\eta$ for some $\eta\in\cA^{\bN}$.  Note that for any words 
$u,w\in\cA^*$ and strings $\eta\in\cA^{\bN}$ and $\zeta\in\cA^{-\bN}$, we 
have $(uw)\eta=u(w\eta)$ and $\zeta(uw)=(\zeta u)w$ so that expressions 
$uw\eta$ and $\zeta uw$ make sense.  On the other hand, usually 
$\zeta.(u\eta)\ne(\zeta u).\eta$.

Let $w_1,w_2,w_3,\dots$ be an infinite list of words in $\cA^*$ in which at 
most finitely many words are empty.  Then there exists a unique string 
$\eta\in\cA^{\bN}$ such that each finite concatenation $w_1w_2\ldots w_k$, 
$k\ge1$ is a prefix of $\eta$.  We denote $\eta$ as an \emph{infinite 
concatenation} $w_1w_2w_3\ldots$.  If all $w_i$ are copies of the same word 
$w$, we also use notation $w^\infty$.  Likewise, the \emph{left-infinite 
concatenation} $\ldots w_3w_2w_1$ is defined as the only left-infinite 
string $\zeta\in\cA^{-\bN}$ such that for any $k\ge1$ we have 
$\zeta=\zeta'(w_k\ldots w_2w_1)$ for some $\zeta'\in\cA^{-\bN}$.  

Next we are going to introduce the \emph{shift} and \emph{subshifts} over 
an alphabet $\cA$, which are topological dynamical systems on the set 
$\cA^{\bZ}$ and its subsets.  First we need a topology on $\cA^{\bZ}$.  We 
regard $\cA$ as a discrete topological space, then $\cA^{\bZ}$ is endowed 
with the \emph{product topology}.  Given any nonempty finite set 
$S_0\subset\bZ$ and any function $\om_0:S_0\to\cA$, let 
$\Sigma_{\cA}(S_0,\om_0)$ denote the set of all functions $\om:\bZ\to\cA$ 
such that the restriction of $\om$ to $S_0$ coincides with $\om_0$.  Sets 
of the form $\Sigma_{\cA}(S_0,\om_0)$, referred to as \emph{cylinders}, are 
open subsets of $\cA^{\bZ}$.  Moreover, they form a base of the topology on 
$\cA^{\bZ}$.  The topological space $\cA^{\bZ}$ is compact and metrizable.  
Convergence in $\cA^{\bZ}$ is the pointwise convergence of functions.  In 
terms of bi-infinite strings, a sequence of strings $\om^{(k)}=\ldots 
x^{(k)}_{-2}x^{(k)}_{-1}x^{(k)}_0.x^{(k)}_1x^{(k)}_2x^{(k)}_3\ldots$, 
$k=1,2,\dots$ converges to a string $\eta=\ldots 
y_{-2}y_{-1}y_0.y_1y_2y_3\ldots$ in $\cA^{\bZ}$ if for every $n\in\bZ$ the 
sequence of letters $x^{(1)}_n,x^{(2)}_n,x^{(3)}_n,\dots$ eventually 
coincides with the constant sequence $y_n,y_n,y_n,\dots$.

\begin{definition}\label{def-shift}
The (two-sided) \textbf{shift} over an alphabet $\cA$ is a transformation 
of $\cA^{\bZ}$, denoted $\si_{\cA}$ or simply $\si$, that sends any 
function $\om:\bZ\to\cA$ to a function $\eta:\bZ\to\cA$ given by 
$\eta(n)=\om(n+1)$ for all $n\in\bZ$.  In terms of bi-infinite strings, 
$\si(\ldots x_{-2}x_{-1}x_0.x_1x_2x_3\ldots)=\ldots x_{-1}x_0x_1.x_2x_3x_4
\ldots$.  A \textbf{subshift} over $\cA$ is the restriction $\si_{\cA}|Y$ 
of the shift $\si_{\cA}$ to a nonempty closed set $Y\subset\cA^{\bZ}$ such 
that $\si_{\cA}(Y)=Y$.  
\end{definition}

The shift $\si_{\cA}$ is invertible and maps cylinders onto cylinders, 
hence it is a homeomorphism of the topological space $\cA^{\bZ}$ onto 
itself.  Any subshift $\si_{\cA}|Y$ is a homeomorphism of the compact 
topological space $Y$ onto itself.

Given an infinite string $\xi\in\cA^{\bN}$, let $\Omega(\xi)$ denote the 
set of all bi-infinite strings $\om\in\cA^{\bZ}$ such that every subword of 
$\om$ is also a subword of $\xi$.

\begin{lemma}\label{omega-xi}
For any $\xi\in\cA^{\bN}$, the set $\Omega(\xi)$ is nonempty and closed, 
and $\si_{\cA}(\Omega(\xi))=\Omega(\xi)$.  Every subword of any string in 
$\Omega(\xi)$ occurs infinitely many times as a subword of $\xi$.
\end{lemma}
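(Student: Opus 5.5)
Let $W$ denote the set of words occurring infinitely often in $\xi$. The proof rests on one observation: a word $u$ lies in $W$ if and only if, for every $k\ge1$, some occurrence of $u$ in $\xi$ starts at a position greater than $k$. Moreover, if $u\in W$ then every subword of $u$ is in $W$, and $u$ extends both to the left and to the right within $W$. Indeed, all but at most one occurrence of $u$ start at a position $\ge2$, so infinitely many occurrences are preceded by some letter. Since $\cA$ is finite, one letter $a$ precedes infinitely many of them, and then $au\in W$. For right extensions we can always find a following letter, and pigeonholing again gives some $ub\in W$.

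\emph{Nonemptiness.} Since $\xi$ is infinite and $\cA$ is finite, some letter $a_0$ occurs infinitely often, so $a_0\in W$. Alternating left and right extensions, I build words $w_0=a_0,w_1,w_2,\dots$ in $W$, each extending the previous one by one letter on the left and one on the right. I place the first letter $a_0$ at index $0$; these words then converge to a bi-infinite string $\om$. Every finite subword of $\om$ is a subword of some $w_n\in W$, hence lies in $W$ and in particular occurs in $\xi$. So $\om\in\Om(\xi)$. (A compactness argument with cylinders would also work.)

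\emph{Closedness and shift invariance.} Suppose $\om\notin\Om(\xi)$. Then some finite subword of $\om$, occupying positions $n..m$, does not occur in $\xi$. The cylinder fixing those positions is an open neighbourhood of $\om$ that is disjoint from $\Om(\xi)$, so $\Om(\xi)$ is closed. The set of subwords of $\si(\om)$ coincides with the set of subwords of $\om$. Hence $\si(\Om(\xi))\subset\Om(\xi)$, and likewise $\si^{-1}(\Om(\xi))\subset\Om(\xi)$, which gives equality.

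\emph{The last claim.} This is the only step that is not purely formal. Let $\om\in\Om(\xi)$ and let $u$ be a subword of $\om$ at positions $n..m$. Suppose, for contradiction, that $u$ occurs in $\xi$ only finitely often. Then every occurrence of $u$ starts at a position $\le K$ for some $K$. Now take the longer subword $v$ of $\om$ at positions $n-K..m$. It lies in $\om$, so it occurs in $\xi$, say starting at position $p\ge1$. Inside that occurrence, $u$ appears starting at position $p+K>K$, which is a contradiction. The point is that a long enough left extension of $u$ forces $u$ to appear far into $\xi$.
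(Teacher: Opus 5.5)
Your proof is correct and follows essentially the same route as the paper. Closedness comes from cylinders, shift invariance from $\omega$ and $\sigma(\omega)$ having the same subwords, the infinitely-often claim from left extensions of $u$ inside $\omega$ (you argue by contradiction, the paper directly for each $k$), and nonemptiness from two-sided extension within the set of infinitely recurring words (you extend left and right separately, the paper pigeonholes on the pair $(l,r)$ at once).
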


\begin{proof}
For any bi-infinite string $\om\in\cA^{\bZ}$, the shifted string 
$\si_{\cA}(\om)$ has exactly the same subwords as $\om$.  It follows that 
$\si_{\cA}(\Omega(\xi))=\Omega(\xi)$.  Next we prove that $\Omega(\xi)$ is 
a closed set.  Given a nonempty word $w\in\cA^*$ of length $k\ge1$ and any 
$n\in\bZ$, let $\Sigma_{w,n}$ be the set of all bi-infinite strings $\ldots 
y_{-1}y_0.y_1y_2\ldots$ in $\cA^{\bZ}$ such that $y_ny_{n+1}\ldots 
y_{n+k-1}=w$.  The set $\Sigma_{w,n}$ is a cylinder.  The complement of 
$\Omega(\xi)$ is the union of cylinders $\Sigma_{w,n}$ over all words 
$w\in\cA^*$ that are not subwords of $\xi$ and over all $n\in\bZ$.  Hence 
the complement is an open set.  Then the set $\Omega(\xi)$ is closed.

Next we prove that every subword $u$ of any string $\om\in\Om(\xi)$ occurs 
infinitely often as a subword of $\xi$.  Let $\xi=x_1x_2x_3\ldots$ and 
$\om=\ldots y_{-1}y_0.y_1y_2\ldots$.  If $u$ is a subword of length $s\ge1$ 
of $\om$, we have $y_my_{m+1}\ldots y_{m+s-1}=u$ for some $m\in\bZ$.  Then 
for any $k\ge1$ the word $u_k=y_{m-k}y_{m-k+1}\ldots 
y_{m-1}y_my_{m+1}\ldots y_{m+s-1}$ is a subword of length $s+k$ of $\om$.  
Since $\om\in\Om(\xi)$, the same word $u_k$ is also a subword of the string 
$\xi$.  That is, $x_{n_k}x_{n_k+1}\ldots x_{n_k+k+s-1}=u_k$ for some 
$n_k\ge1$.  Then $x_{n_k+k}x_{n_k+k+1}\ldots x_{n_k+k+s-1}$ is another 
occurrence of $u$ as a subword of $\xi$.  Since $n_k+k\ge k+1$, there are 
infinitely many numbers of the form $n_k+k$, $k\ge1$.  Therefore $u$ occurs 
infinitely often as a subword of the string $\xi$.

It remains to prove that the set $\Om(\xi)$ is not empty.  Let $W_\xi$ be 
the set of all nonempty words that occur infinitely many times as subwords 
of $\xi$.  Since the alphabet $\cA$ is finite while the string $\xi$ is 
infinite, $W_\xi$ contains at least one $1$-letter word.  Let us show that 
for any word $w\in W_\xi$ there exist letters $l,r\in\cA$ such that the 
word $lwr$ belongs to $W_\xi$ as well.  Let $s$ be the length of $w$.  
There is a sequence $\{n_k\}$, $2\le n_1<n_2<n_3<\ldots$, such that 
$x_{n_k}x_{n_k+1}\ldots x_{n_k+s-1}=w$ for all $k\ge1$.  Since the 
alphabet $\cA$ is finite, we can find letters $l,r\in\cA$ such that 
$(x_{n_k-1},x_{n_k+s})=(l,r)$ for infinitely many values of $k$.  Then 
$lwr\in W_\xi$.

Now it follows by induction from the above that there exist two sequences 
of letters $r_0,r_1,r_2,\dots$ and $l_1,l_2,l_3,\dots$ such that for any 
$k\ge1$ the word $w_k=l_kl_{k-1}\ldots l_2l_1r_0r_1r_2\ldots r_k$ belongs 
to $W_\xi$.  Then the bi-infinite string $\om=\ldots l_2l_1r_0.r_1r_2 
\ldots$ belongs to the set $\Om(\xi)$ so that this set is not empty.  
Indeed, any subword $u$ of $\om$ is also a subword of $w_k$ for all 
sufficiently large $k$.  Since each $w_k$ is a subword of $\xi$, so is $u$.
\end{proof}

In view of Lemma \ref{omega-xi}, the restriction of the shift $\si_{\cA}$ 
to the set $\Om(\xi)$, where $\xi\in\cA^{\bN}$, is a subshift.  We refer to 
it as the subshift generated by the string $\xi$.  Even though $\xi$ does 
not belong to $\cA^{\bZ}$, the shift directly relates it to the set 
$\Om(\xi)$ via the following construction.

\begin{lemma}\label{limit-points}
For any infinite string $\xi\in\cA^{\bN}$ and left-infinite string 
$\zeta\in\cA^{-\bN}$, the set of all limit points of the sequence 
$\zeta.\xi,\si_{\cA}(\zeta.\xi),\si_{\cA}^2(\zeta.\xi),\dots$ in 
$\cA^{\bZ}$ coincides with $\Omega(\xi)$.
\end{lemma}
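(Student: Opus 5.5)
Write $\xi=x_1x_2x_3\ldots$, $\zeta=\ldots z_2z_1$, and $\om_n=\si^n(\zeta.\xi)$ for $n\ge0$. Then $\om_n=\ldots y^{(n)}_0.y^{(n)}_1\ldots$ with $y^{(n)}_i=x_{n+i}$ whenever $n+i\ge1$. So on the window $\{1-n,\dots,N\}$ the string $\om_n$ agrees with a block of $\xi$, and $\zeta$ only affects positions $i\le -n$. I will prove the two inclusions separately, using that convergence in $\cA^{\bZ}$ is pointwise and that the cylinders $\Sigma_{w,m}$ from the proof of Lemma~\ref{omega-xi} form a neighbourhood base.

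\emph{Limit points lie in $\Om(\xi)$.} Let $\eta$ be a limit point, so $\om_{n_k}\to\eta$ along some sequence $n_k\to\infty$. A limit point of the sequence may require $n_k$ to be bounded if some $\om_n$ repeats. I will handle this by noting that if $\om_n=\eta$ for infinitely many $n$, we may still take $n_k\to\infty$. If instead $\eta$ equals $\om_n$ for only finitely many $n$, then every neighbourhood of $\eta$ contains $\om_n$ for infinitely many $n$, so again we can choose $n_k\to\infty$. Now take a subword $u=y_m\ldots y_{m+s-1}$ of $\eta$. For $k$ large we have $n_k+m\ge1$, and $\om_{n_k}$ agrees with $\eta$ on $\{m,\dots,m+s-1\}$. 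Hence $u=x_{n_k+m}\ldots x_{n_k+m+s-1}$ is a subword of $\xi$, and therefore $\eta\in\Om(\xi)$.

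\emph{Points of $\Om(\xi)$ are limit points.} Let $\eta=\ldots y_0.y_1\ldots\in\Om(\xi)$. For each $k$ consider the central word $u_k=y_{-k}\ldots y_k$. By Lemma~\ref{omega-xi}, $u_k$ occurs infinitely often in $\xi$, so we can pick an occurrence $x_{p_k}\ldots x_{p_k+2k}=u_k$ with $p_k$ so large that $n_k:=p_k+k\ge k+1$ and $n_k>n_{k-1}$. Then $\om_{n_k}$ has $y^{(n_k)}_i=x_{n_k+i}=y_i$ for $|i|\le k$, because $n_k+i\ge p_k\ge1$. Hence $\om_{n_k}\to\eta$ with $n_k$ strictly increasing, so $\eta$ is a limit point of the sequence.

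The only delicate point is the definitional one in the first inclusion: we must ensure the approximating indices tend to infinity, so that the coordinates near the reference point eventually come from $\xi$ rather than $\zeta$. Apart from that, both directions are direct index bookkeeping.
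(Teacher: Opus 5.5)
Your proof is correct and follows the paper's argument essentially step for step. In one direction, the central words $y_{-k}\ldots y_k$ occur arbitrarily far out in $\xi$ (Lemma~\ref{omega-xi}), giving a subsequence that converges to any given point of $\Om(\xi)$; in the other, pointwise convergence along indices tending to infinity shows that every subword of a limit point occurs in $\xi$. Your case analysis about possibly bounded indices is unnecessary: a limit point of the sequence is by definition the limit of a subsequence $\si^{k_i}(\zeta.\xi)$ with $k_1<k_2<\cdots$, which is exactly how the paper uses the notion.
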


\begin{proof}
Let $\ldots x_{-1}x_0.x_1x_2\ldots$ be the bi-infinite string $\zeta.\xi$.  
Then $\xi=x_1x_2\ldots$.  For any $k\ge0$ we have $\si_{\cA}^k(\zeta.\xi)= 
\ldots x^{(k)}_{-1}x^{(k)}_0.x^{(k)}_1x^{(k)}_2\ldots$, where 
$x^{(k)}_n=x_{n+k}$ for all $n\in\bZ$.  Consider an arbitrary bi-infinite 
string $\om=\ldots y_{-1}y_0.y_1y_2\ldots\in\cA^{\bZ}$.  First assume that 
$\om\in\Om(\xi)$.  For any $k\ge1$ the word $w_k=y_{-k}\ldots 
y_{-1}y_0.y_1\ldots y_k$ is a subword of $\om$ of length $2k+1$.  Hence it 
is also a subword of $\xi$.  That is, $x_{n_k}x_{n_k+1}\ldots 
x_{n_k+2k}=w_k$ for some $n_k\ge1$.  Moreover, $n_k$ can be chosen 
arbitrarily large since $w_k$ occurs infinitely many times as a subword of 
$\xi$ (due to Lemma \ref{omega-xi}).  Therefore we can build the sequence 
$\{n_k\}_{k\ge1}$ inductively in such a way that $1<n_1+1<n_2+2<n_3+3< 
\ldots$.  Then $\{\si_{\cA}^{n_k+k}(\zeta.\xi)\}_{k\ge1}$ is a subsequence 
of the sequence $\zeta.\xi,\si_{\cA}(\zeta.\xi),\si_{\cA}^2(\zeta.\xi), 
\dots$.  This subsequence converges to $\om$ since 
$x^{(n_k+k)}_m=x_{m+n_k+k}=y_m$ whenever $k\ge|m|$.

Conversely, assume that $\om$ is a limit point of the sequence 
$\zeta.\xi,\si_{\cA}(\zeta.\xi),\si_{\cA}^2(\zeta.\xi),\dots$.  That is, 
$\si_{\cA}^{k_i}(\zeta.\xi)\to\om$ as $i\to\infty$, where $0\le 
k_1<k_2<k_3<\ldots$.  Take any nonempty subword $u$ of the string $\om$.  
We have $y_{n_1}y_{n_1+1}\ldots y_{n_2}=u$ for some integers $n_1$ and 
$n_2$, $n_1\le n_2$.  The convergence implies that the word 
$x^{(k_i)}_{n_1}x^{(k_i)}_{n_1+1}\ldots x^{(k_i)}_{n_2}= 
x_{n_1+k_i}x_{n_1+1+k_i}\ldots x_{n_2+k_i}$ coincides with $u$ for all 
sufficiently large $i$.  If $i$ is large enough then, additionally, 
$n_1+k_i\ge1$ so that $u$ occurs as a subword of the string $\xi$.  Thus 
every subword of $\om$ is also a subword of $\xi$, which means that 
$\om\in\Om(\xi)$.
\end{proof}

\begin{definition}\label{def-sub}
A \textbf{substitution} over an alphabet $\cA$ is a map 
$\mathsf{s}:\cA\to\cA^*\setminus\{\varnothing\}$.  The substitution can be 
applied term by term to any finite or infinite string $x_1x_2x_3\ldots$ of 
letters from $\cA$, which yields a finite or infinite concatenation 
$\mathsf{s}(x_1)\mathsf{s}(x_2)\mathsf{s}(x_3)\ldots$.  Hence the map 
$\mathsf{s}$ induces a transformation of $\cA^*$ and a transformation of 
$\cA^{\bN}$.  We use the same notation for the substitution and both 
induced transformations.
\end{definition}

Let $\mathsf{s}$ be a substitution over an alphabet $\cA$.  Suppose $z$ is 
a letter in $\cA$ such that the word $\mathsf{s}(z)$ begins with $z$ and 
has length greater than $1$.  That is, $\mathsf{s}(z)=zw$ for some nonempty 
word $w\in\cA^*$.  Consider a sequence of words 
$z,\mathsf{s}(z),\mathsf{s}^2(z),\dots$ obtained by repeatedly applying the 
substitution to the letter $z$.  For any $k\ge1$ we have $\mathsf{s}^k(z)= 
\mathsf{s}^{k-1}(zw)=\mathsf{s}^{k-1}(z)\mathsf{s}^{k-1}(w)$.  Note that 
$\mathsf{s}^{k-1}(w)\ne\varnothing$ since $w\ne\varnothing$.  Hence every 
word in the sequence is a prefix of the next word and of smaller length 
than the next word.  It follows that, as $k\to\infty$, words 
$\mathsf{s}^k(z)$ in a sense converge to an infinite string.  To be 
precise, there exists a unique string $\xi\in\cA^{\bN}$ such that each 
$\mathsf{s}^k(z)$ is a prefix of $\xi$.  Clearly, $\mathsf{s}(\xi)=\xi$.  
Moreover, $\xi$ is the only infinite string that begins with the letter $z$ 
and is preserved by the substitution $\mathsf{s}$.

\begin{definition}\label{def-subshift}
The \textbf{substitution subshift} generated by a substitution $\mathsf{s}$ 
over an alphabet $\cA$ is a subshift $\si_{\cA}|\Omega(\xi)$, where $\xi$ 
is an infinite string preserved by $\mathsf{s}$.
\end{definition}

In this paper we are interested in substitution subshifts generated by 
three substitutions.  The first one is the \emph{Thue-Morse} substitution 
$\TM$ over the alphabet $\{0,1\}$ given by $\TM(0)=01$, $\TM(1)=10$.  The 
second one is the so-called \emph{period doubling} substitution $\pd$ over 
the alphabet $\{x,y\}$ given by $\pd(x)=xy$, $\pd(y)=xx$.  The third one is 
a substitution $\sub$ over the alphabet $\{a,c,d\}$ given by $\sub(a)=aca$, 
$\sub(c)=d$, $\sub(d)=c$.

Since the word $\TM(0)=01$ begins with $0$ and the word $\TM(1)=10$ begins 
with $1$, the substitution $\TM$ preserves a unique infinite string 
$\xiTM\in\{0,1\}^{\bN}$ that begins with $0$ and another infinite string 
$\xiTM'$ that begins with $1$.

\begin{lemma}\label{TM-unique}
$\xiTM$ and $\xiTM'$ are obtained from each other by changing all 0s to 1s 
and all 1s to 0s.  Both strings generate the same substitution subshift.
\end{lemma}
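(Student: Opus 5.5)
Let $\tau:\{0,1\}\to\{0,1\}$ be the letter swap $0\leftrightarrow1$, extended letterwise to finite words, infinite strings and bi-infinite strings. The plan is to show first that $\tau$ commutes with $\TM$, then derive $\xiTM'=\tau(\xiTM)$ from uniqueness of fixed strings, and finally use a recurrence property of $\xiTM$ to show the two subshifts coincide.

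\emph{Commutation.} We have $\TM(\tau(0))=\TM(1)=10=\tau(01)=\tau(\TM(0))$, and symmetrically for $1$. Hence $\TM\circ\tau=\tau\circ\TM$ on letters, and therefore on all finite and infinite strings, since both maps act letter by letter.

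\emph{First claim.} Applying the commutation to $\xiTM$ gives $\TM(\tau(\xiTM))=\tau(\TM(\xiTM))=\tau(\xiTM)$. So $\tau(\xiTM)$ is an infinite string preserved by $\TM$ that begins with $1$. By the uniqueness statement made just before Definition \ref{def-subshift}, $\tau(\xiTM)=\xiTM'$.

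\emph{Second claim.} A bi-infinite string $\om$ lies in $\Om(\xiTM)$ exactly when every subword of $\om$ is a subword of $\xiTM$. Since $\tau$ maps the subwords of $\xiTM$ bijectively onto the subwords of $\xiTM'$, we get $\Om(\xiTM')=\tau(\Om(\xiTM))$. It therefore suffices to show that the set of subwords of $\xiTM$ is closed under $\tau$; by symmetry this gives $\Om(\xiTM')=\Om(\xiTM)$.

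The key observation is that $\TM^{k+1}(0)=\TM^k(0)\TM^k(1)=\TM^k(0)\,\tau(\TM^k(0))$. This identity follows by induction from $\TM^k(1)=\TM^k(\tau(0))=\tau(\TM^k(0))$, which uses the commutation. Every subword $u$ of $\xiTM$ lies inside some prefix $\TM^k(0)$. Then $\tau(u)$ is a subword of $\tau(\TM^k(0))$, which is itself a subword of $\TM^{k+1}(0)$ and hence of $\xiTM$.

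The only point needing care is this closure-under-$\tau$ step; the rest is formal. Note that it suffices to compare ordinary subwords, because $\Om(\xi)$ is defined in the paper via all subwords, not only those occurring infinitely often.
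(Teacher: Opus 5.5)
Your proof is correct and follows essentially the same route as the paper: you commute the letter swap with $\TM$, use uniqueness of the fixed string beginning with $1$, and compare subword sets via the decomposition $\TM^{k+1}(0)=\TM^k(0)\TM^k(1)$. The difference is cosmetic: you show the subword set of $\xiTM$ is $\tau$-invariant (equality then follows because $\tau$ is an involution), whereas the paper checks both inclusions directly using that $\TM^k(1)$ is a subword of $\TM^{k+1}(0)$ and $\TM^k(0)$ is a subword of $\TM^{k+1}(1)$.
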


\begin{proof}
Consider a substitution $\mathsf{s}_{(01)}$ over the alphabet $\{0,1\}$ 
given by $\mathsf{s}_{(01)}(0)=1$, $\mathsf{s}_{(01)}(1)=0$.  We obtain 
that $\mathsf{s}_{(01)}\TM(0)=\TM\mathsf{s}_{(01)}(0)=10$ and 
$\mathsf{s}_{(01)}\TM(1)=\TM\mathsf{s}_{(01)}(1)=01$.  It follows that 
$\mathsf{s}_{(01)}\TM(\xi)=\TM\mathsf{s}_{(01)}(\xi)$ for any word or 
infinite string $\xi$ over the alphabet $\{0,1\}$.  In particular, 
$\TM(\mathsf{s}_{(01)}(\xiTM))=\mathsf{s}_{(01)}(\TM(\xiTM)) 
=\mathsf{s}_{(01)}(\xiTM)$.  Since the string $\mathsf{s}_{(01)}(\xiTM)$ 
begins with $1$, we have $\mathsf{s}_{(01)}(\xiTM)=\xiTM'$ due to 
uniqueness of $\xiTM'$.  In other words, $\xiTM'$ is obtained from $\xiTM$ 
by changing all 0s to 1s and all 1s to 0s.  Then also 
$\mathsf{s}_{(01)}(\xiTM')=\xiTM$.

For any $k\ge1$ we obtain $\TM^{k+1}(0)=\TM^k(01)=\TM^k(0)\TM^k(1)$ and 
$\TM^{k+1}(1)=\TM^k(10)=\TM^k(1)\TM^k(0)$.  As a consequence, the word 
$\TM^k(1)$ is a subword of $\TM^{k+1}(0)$ while the word $\TM^k(0)$ is a 
subword of $\TM^{k+1}(1)$.  Any subword $w'$ of the string $\xiTM'$ is also 
a subword of its prefix $\TM^k(1)$ for all sufficiently large $k$.  Since 
$\TM^k(1)$ is a subword of $\TM^{k+1}(0)$, which is a prefix of the string 
$\xiTM$, it follows that $w'$ is a subword of $\xiTM$ as well.  Conversely, 
if $w$ is a subword of the string $\xiTM$, then for some $k\ge1$ it is a 
subword of the prefix $\TM^k(0)$, which is a subword of $\TM^{k+1}(1)$, 
which is a prefix of $\xiTM'$.  Hence $w$ also a subword of $\xiTM'$.  Thus 
the strings $\xiTM$ and $\xiTM'$ have the same subwords.  Therefore 
$\Om(\xiTM)=\Om(\xiTM')$ so that both strings generate the same subshift.
\end{proof}

Since the word $\pd(x)=xy$ begins with $x$, the substitution $\pd$ 
preserves a unique infinite string $\xipd\in\{x,y\}^{\bN}$ that begins with 
$x$.  Since the word $\pd(y)=xx$ does not begin with $y$, the substitution 
$\pd$ preserves no infinite string that begins with $y$.  Likewise, the 
substitution $\sub$ preserves a unique infinite string 
$\xiGE\in\{a,c,d\}^{\bN}$ that begins with $a$ and no infinite string that 
begins with $c$ or $d$.

Let us explore the dynamics of the substitution subshifts 
$\si_{\{0,1\}}|\Om(\xiTM)$, $\si_{\{x,y\}}|\Om(\xipd)$ and 
$\si_{\{a,c,d\}}|\Om(\xiGE)$.  First we need detailed descriptions of the 
strings $\xiTM$, $\xipd$ and $\xiGE$.

\begin{lemma}\label{xi-TM}
The $n$-th symbol of the string $\xiTM$ is $0$ if the number of 1s in the 
binary expansion of the number $n-1$ is even, and $1$ otherwise.
\end{lemma}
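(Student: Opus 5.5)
We will show by induction on $k\ge0$ that the prefix $\TM^k(0)$ of $\xiTM$, which has length $2^k$, is exactly the word whose $n$-th letter ($1\le n\le 2^k$) is $0$ or $1$ according to whether the binary digit sum of $n-1$ is even or odd. Every $n\ge1$ satisfies $n\le 2^k$ for some $k$, and each $\TM^k(0)$ is a prefix of $\xiTM$. So this claim determines every symbol of $\xiTM$ and proves the lemma.

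For the induction we use the identity $\TM^{k+1}(0)=\TM^k(0)\TM^k(1)$, which appears in the proof of Lemma \ref{TM-unique}. We also use the fact from that proof that $\TM^k(1)=\mathsf{s}_{(01)}(\TM^k(0))$, i.e., $\TM^k(1)$ is obtained from $\TM^k(0)$ by swapping $0$ and $1$. This fact follows from the commutation $\mathsf{s}_{(01)}\TM=\TM\mathsf{s}_{(01)}$ applied $k$ times.

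The base case $k=0$ is immediate: the word is $0$, and $n-1=0$ has no 1s. For the inductive step, let $1\le n\le 2^{k+1}$ and write $\beta(m)$ for the number of 1s in the binary expansion of $m$.
\begin{itemize}
\item If $n\le 2^k$, the $n$-th letter of $\TM^{k+1}(0)$ is the $n$-th letter of $\TM^k(0)$, and the induction hypothesis gives the claim.
\item If $2^k<n\le 2^{k+1}$, the $n$-th letter is the $(n-2^k)$-th letter of $\TM^k(1)$. This is the complement of the $(n-2^k)$-th letter of $\TM^k(0)$. Since $0\le n-1-2^k<2^k$, we have $n-1=2^k+(n-1-2^k)$, and adding $2^k$ just sets the leading bit. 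Hence $\beta(n-1)=\beta(n-1-2^k)+1$, so the parity flips, matching the complement.
\end{itemize}

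There is no real obstacle here. The only point needing care is the index shift: symbols are numbered from $1$ while the binary expansion uses $n-1$, so the splitting point is at $n-1=2^k$.
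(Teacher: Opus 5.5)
Your proof is correct, but it uses a different recursion from the paper's. The paper works only with the fixed-point equation $\TM(\xiTM)=\xiTM$ and the fact that $\TM$ has constant length $2$. This gives $\TM(x_i)=x_{2i-1}x_{2i}$, hence $x_{2i-1}=x_i$ and $x_{2i}\ne x_i$. Since the binary expansions of $2(i-1)$ and $2i-1$ are those of $i-1$ with $0$, respectively $1$, appended, a strong induction on $n$ finishes the proof. That is a recursion on the least significant binary digit, and it needs a separate check of $n=1,2$.

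You instead combine the prefix doubling $\TM^{k+1}(0)=\TM^k(0)\TM^k(1)$ with the complementation symmetry $\TM^k(1)=\mathsf{s}_{(01)}\bigl(\TM^k(0)\bigr)$ from Lemma \ref{TM-unique}. This amounts to the most-significant-digit recursion $t(2^k+m)=1-t(m)$ for $0\le m<2^k$, proved by induction on $k$. Your route describes the finite words $\TM^k(0)$ explicitly and avoids the small-case bookkeeping.

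However, it depends on the $0\leftrightarrow1$ symmetry, which is special to Thue--Morse. The paper's argument needs no such symmetry. It is essentially the template the paper reuses for $\xipd$ and $\xiGE$ in Lemmas \ref{xi-pd} and \ref{xi-GE}, where no analogous complementation is available.
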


\begin{proof}
For any $n\in\bN$ let $x_n$ denote the $n$-th symbol of $\xiTM$.  Then 
$\xiTM=x_1x_2x_3\ldots$.  As $\TM(\xiTM)=\xiTM$, we also have 
$\xiTM=\TM(x_1)\TM(x_2)\TM(x_3)\ldots$.  Since $\TM(0)$ and $\TM(1)$ are 
both words of length $2$, we obtain that $\TM(x_i)=x_{2i-1}x_{2i}$ for all 
$i\ge1$.  Since $\TM(0)=01$ and $\TM(1)=10$, we further obtain that 
$x_{2i-1}=x_i$ while $x_{2i}\ne x_i$.

For any $i\ge2$ the binary expansion of the number $(2i-1)-1=2(i-1)$ is 
obtained from the binary expansion of $i-1$ by appending $0$ to it while
the binary expansion of $2i-1$ is obtained by appending $1$.  Since 
$x_{2i-1}=x_i$ and $x_{2i}\ne x_i$, it follows that the lemma holds for 
$n=2i-1$ and $n=2i$ whenever it holds for $n=i$.  Besides, the lemma holds 
for $n=1$ and $n=2$ as $x_1=0$ and $x_2=1$.  Since any $n\ge3$ can be 
represented as $2i-1$ or $2i$ for some integer $i$, $2\le i<n$, the lemma 
follows by strong induction on $n$.
\end{proof}

\begin{lemma}\label{xi-pd}
Let $n=2^km$, where $k$ is a nonnegative integer and $m$ is a positive odd 
integer.  Then the $n$-th letter of the string $\xipd$ is $x$ if $k$ is 
even, and $y$ if $k$ is odd.
\end{lemma}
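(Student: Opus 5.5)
I will follow the argument of Lemma \ref{xi-TM}, replacing binary expansions by the $2$-adic valuation of $n$. Write $\xipd=x_1x_2x_3\ldots$. Since $\pd(\xipd)=\xipd$ and both words $\pd(x)=xy$ and $\pd(y)=xx$ have length $2$, we get $\xipd=\pd(x_1)\pd(x_2)\pd(x_3)\ldots$, with $\pd(x_i)=x_{2i-1}x_{2i}$ for all $i\ge1$. Reading off the two images gives the recursive rules
\[
x_{2i-1}=x \quad\text{for all } i\ge1, \qquad x_{2i}=y \text{ if } x_i=x, \qquad x_{2i}=x \text{ if } x_i=y .
\]
In other words, every odd-indexed letter is $x$, and $x_{2i}$ is always the letter different from $x_i$.

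Next I would argue by strong induction on $n$, writing $n=2^km$ with $m$ odd. If $n$ is odd, then $k=0$ is even, and $x_n=x$ by the first rule, as the lemma claims. If $n$ is even, write $n=2i$ with $1\le i<n$. Then $i=2^{k-1}m$, so the exponent for $i$ is $k-1$, which has the opposite parity to $k$. By the induction hypothesis, $x_i$ is $x$ exactly when $k-1$ is even. The second rule flips the letter, so $x_n$ is $x$ exactly when $k$ is even. The base case $n=1$ is covered by the odd case, giving $x_1=x$.

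There is no real obstacle. The only point needing care is the derivation $\pd(x_i)=x_{2i-1}x_{2i}$, which relies on both images having length exactly $2$, exactly as in the proof of Lemma \ref{xi-TM}.
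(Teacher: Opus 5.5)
Your proposal is correct and is essentially the paper's proof. You derive $\pd(x_i)=x_{2i-1}x_{2i}$, deduce that odd positions carry $x$ and that $x_{2i}$ is the opposite letter to $x_i$, and then induct on the $2$-adic exponent. Your strong induction on $n$ is just a more detailed version of the paper's ``induction on $k$''.
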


\begin{proof}
For any $n\in\bN$ let $x_n$ denote the $n$-th letter of $\xipd$.  Then 
$\xipd=x_1x_2x_3\ldots$.  As $\pd(\xipd)=\xipd$, we also have 
$\xipd=\pd(x_1)\pd(x_2)\pd(x_3)\ldots$.  Since $\pd(x)$ and $\pd(y)$ are 
both words of length $2$, we obtain that $\pd(x_i)=x_{2i-1}x_{2i}$ for all 
$i\ge1$.  Since $\pd(x)=xy$ and $\pd(y)=xx$, we further obtain that 
$x_{2i}=y$ if $x_i=x$, and $x_{2i}=x$ if $x_i=y$.  Besides, $x_n=x$ for any 
odd $n$.  Now the lemma easily follows by induction on $k$.
\end{proof}

\begin{lemma}\label{xi-GE}
Let $n=2^km$, where $k$ is a nonnegative integer and $m$ is a positive odd 
integer.  Then the $n$-th letter of the string $\xiGE$ is $a$ if $k=0$, $c$ 
if $k$ is odd, and $d$ if $k$ is even and positive.
\end{lemma}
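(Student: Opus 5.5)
Since $\sub$ is not of constant length, I will not mimic the proofs of Lemmas \ref{xi-TM} and \ref{xi-pd} directly. Instead I will guess an explicit description of $\xiGE$ and check that it is fixed by $\sub$. Let $\eta=y_1y_2y_3\ldots\in\{a,c,d\}^{\bN}$ be the string defined by the rule in the statement: $y_n=a$ if $n$ is odd, $y_n=c$ if $n=2^km$ with $k$ odd, and $y_n=d$ if $k$ is even and positive. Then $\eta$ begins with $a$. By the uniqueness remark preceding Definition \ref{def-subshift}, it is enough to show $\sub(\eta)=\eta$.

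To verify the fixed point property, I write $\eta=a\,y_2\,a\,y_4\,a\,y_6\,a\ldots$, that is, $\eta=ay_2ay_4ay_6\ldots$ with $a$ in every odd position. I regroup $\sub(\eta)$ as $\sub(y_1)\sub(y_2)\sub(y_3)\ldots$. Every odd-indexed letter is $a$ and is replaced by $aca$. Every even-indexed letter $y_{2j}$ is $c$ or $d$ and is replaced by a single letter $\bar y_{2j}$, where the bar swaps $c$ and $d$. Hence
\[
\sub(\eta)=aca\,\bar y_2\,aca\,\bar y_4\,aca\,\bar y_6\ldots ,
\]
which is a concatenation of blocks $aca\,\bar y_{2j}$, each of length $4$. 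So for $j\ge1$, position $4j-3$ holds $a$, position $4j-2$ holds $c$, position $4j-1$ holds $a$, and position $4j$ holds $\bar y_{2j}$.

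Next I compare this with $\eta$ position by position. Odd positions hold $a$ in both strings. A position $n\equiv2\pmod 4$ has $k=1$, which is odd, so $\eta$ has $c$ there, as required. For $n=4j$, writing $2j=2^km$ with $k\ge1$ gives $4j=2^{k+1}m$. Raising the exponent by one flips its parity, so $y_{4j}$ is obtained from $y_{2j}$ by swapping $c$ and $d$, that is, $y_{4j}=\bar y_{2j}$. This matches, so $\sub(\eta)=\eta$ and therefore $\eta=\xiGE$.

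The only delicate point is justifying the regrouping: the images of consecutive letters have lengths $3,1,3,1,\ldots$, so positions in $\sub(\eta)$ have to be tracked with the blocks of length $4$ above rather than by a simple doubling of indices. An alternative is an inductive argument on the prefixes $\sub^k(a)$, but the direct verification above avoids it.
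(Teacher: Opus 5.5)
Your proof is correct. It shares the paper's core computation, but the logic runs in the opposite direction.

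The paper works forward from $\xiGE$ itself. First it shows that $a$ occupies exactly the odd positions. It does this by noting that the set $W$ of words $ay_1ay_2\ldots ay_s$ with $y_i\in\{c,d\}$ is $\sub$-invariant, and that every $\sub^k(ac)$ is a prefix of $\xiGE$. Only after that can it cut $\xiGE=\sub(\xiGE)$ into the length-$4$ blocks $\sub(x_{2i-1}x_{2i})$. From these blocks it reads off $x_{4i-2}=c$ and that $x_{4i}$ is $x_{2i}$ with $c$ and $d$ swapped, and it finishes by induction on $k$.

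You instead define the candidate $\eta$ by the formula, so the alternation of $a$ with $c$ or $d$ is built in from the start. The same block computation $\sub(ay)=aca\bar y$ then gives $\sub(\eta)=\eta$ directly. The uniqueness of the $\sub$-fixed string beginning with $a$ replaces both the invariant-set step and the induction.

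Your route is shorter, given that the statement supplies the formula. The paper's route derives the formula rather than presupposing it, which is how one would find it in the first place.
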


\begin{proof}
Let $W$ be the set of all words of the form $ay_1ay_2\ldots ay_s$, where 
each $y_i$ is $c$ or $d$.  We have $\sub(ac)=acad$ and $\sub(ad)=acac$.  It 
follows that the set $W$ is invariant under the substitution $\sub$.  
Besides, $\sub$ doubles the length of every word in $W$.  Since $ac$ is a 
prefix of the infinite string $\xiGE$ and $\sub(\xiGE)=\xiGE$, each of the 
words $\sub(ac),\sub^2(ac),\sub^3(ac),\dots$ is also a prefix of $\xiGE$.  
All these words belong to $W$ and their length gets arbitrarily large.  We 
conclude that $\xiGE=x_1x_2x_3\ldots$, where $x_n=a$ if $n$ is odd, and 
$x_n=c$ or $d$ if $n$ even.

We have $\xiGE=\sub(\xiGE)=\sub(x_1x_2)\sub(x_3x_4)\sub(x_5x_6)\ldots$. 
Since $\sub(ac)$ and $\sub(ad)$ are both words of length $4$, it follows 
that $\sub(x_{2i-1}x_{2i})=x_{4i-3}x_{4i-2}x_{4i-1}x_{4i}$ for all
$i\ge1$.  Since $\sub(ac)=acad$ and $\sub(ad)=acac$, it further follows 
that $x_{4i}=d$ if $x_{2i}=c$, and $x_{4i}=c$ if $x_{2i}=d$.  Besides, 
$x_n=c$ for any even $n$ not divisible by $4$.  Now the lemma easily
follows by induction on $k$.
\end{proof}

The description of the string $\xiGE$ obtained in Lemma \ref{xi-GE} allows 
to produce explicit examples of bi-infinite strings in the set $\Om(\xiGE)$.

\begin{lemma}\label{xiGE-examples}
Let $\overleftarrow{\xiGE}$ denote the left-infinite string obtained by 
writing $\xiGE$ backwards.  Then the bi-infinite strings 
$\overleftarrow{\xiGE}c.\xiGE$ and $\overleftarrow{\xiGE}d.\xiGE$ belong to 
$\Om(\xiGE)$.
\end{lemma}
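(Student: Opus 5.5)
By the definition of $\Om(\xiGE)$, it suffices to show that every finite subword of $\overleftarrow{\xiGE}c.\xiGE$ (resp.\ $\overleftarrow{\xiGE}d.\xiGE$) is a subword of $\xiGE$. Any finite subword sits inside a central block $w_k=\overleftarrow{u_k}\,z\,u_k$ for $k$ large, where $u_k$ is the prefix of $\xiGE$ of length $2^k-1$ and $z\in\{c,d\}$. So the plan is to find, for each $k$ and each $z\in\{c,d\}$, a position in $\xiGE$ where $\overleftarrow{u_k}\,z\,u_k$ occurs.

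The key observation from Lemma \ref{xi-GE} is that the letter $x_n$ depends only on the $2$-adic valuation of $n$. For $0<j<2^k$ we have $v_2(N\pm j)=v_2(j)$ whenever $2^k\mid N$. Hence, taking the centre at $N=2^k m$ with $m\ge1$, the letters $x_{N-j}$ and $x_{N+j}$ for $1\le j\le 2^k-1$ both equal $x_j$.

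Writing out the positions $N-(2^k-1),\dots,N-1$, this gives exactly $\overleftarrow{u_k}$. Likewise the positions $N+1,\dots,N+2^k-1$ give $u_k$. The centre letter $x_N$ is $c$ if $v_2(N)$ is odd and $d$ if $v_2(N)$ is even and positive.

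It remains to choose the centre. Take $N=2^{k'}$ with $k'\ge k$ and $k'\ge1$, choosing the parity of $k'$ to get the desired letter: $k'$ odd gives $c$, $k'$ even and positive gives $d$. Since $2^k\mid N$, the computation above applies, so $\overleftarrow{u_k}\,z\,u_k$ occurs in $\xiGE$ starting at position $N-2^k+1\ge1$.

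Letting $k\to\infty$, every subword of either bi-infinite string is a subword of $\xiGE$. Therefore both strings lie in $\Om(\xiGE)$.

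The only point needing care is the index bookkeeping: the left part read left to right is $x_{2^k-1}\ldots x_1$, i.e.\ the reversal of $u_k$, which matches the notation $\overleftarrow{\xiGE}$. The occurrence also must start at a positive index, which the choice $N\ge 2^k$ guarantees. There is no real obstacle.
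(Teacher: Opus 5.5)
Your proof is correct and is essentially the paper's argument. Both rest on the fact from Lemma \ref{xi-GE} that $x_n$ depends only on the $2$-adic valuation of $n$, so that the window of radius $2^k-1$ around a power of $2$ of the right parity in $\xiGE$ reproduces the central block $\overleftarrow{u_k}\,z\,u_k$. The only cosmetic difference is that the paper centres at $2^k$ itself, after noting that $x_{1-2^k}\ldots x_{-1}$, $x_1\ldots x_{2^k-1}$ and $x_{2^k+1}\ldots x_{2^{k+1}-1}$ coincide, whereas you centre at $2^{k'}$ with $k'\ge k$ and read off the reflection directly.
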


\begin{proof}
Let $\om=\ldots x_{-1}x_0.x_1x_2\ldots$ be one of the strings 
$\overleftarrow{\xiGE}c.\xiGE$ and $\overleftarrow{\xiGE}d.\xiGE$.  That 
is, $\xiGE=x_1x_2x_3\ldots$, $x_{-n}=x_n$ for all $n\ge1$, and $x_0=c$ or 
$d$.  We need to show that every subword of $\om$ is also a subword of 
$\xiGE$.  Lemma \ref{xi-GE} implies that for any $n\ge1$ the letter 
$x_n$ depends only on the largest power of $2$ that divides $n$.  As a 
consequence, $x_n=x_{2^k-n}=x_{2^k+n}$ for any integers $n,k\ge1$ such that 
$n<2^k$.  It follows that for any $k\ge1$ the words $w_k=x_1x_2\ldots 
x_{2^k-1}$, $w^-_k=x_{1-2^k}\ldots x_{-2}x_{-1}$ and 
$w^+_k=x_{2^k+1}x_{2^k+2}\ldots x_{2^{k+1}-1}$ are the same.

Any subword $u$ of the string $\om$ is also a subword of $w^-_kx_0w_k$ for 
all sufficiently large $k$.  By the above, $w^-_kx_0w_k=w_kx_0w^+_k$.  By 
Lemma \ref{xi-GE}, $x_{2^k}=c$ if $k$ is odd, and $x_{2^k}=d$ if $k$ is 
even and positive.  It follows that $u$ is a subword of $w_kx_{2^k}w^+_k$ 
for some $k$.  Since $w_kx_{2^k}w^+_k$ is a prefix of the string $\xiGE$, 
the word $u$ is also a subword of $\xiGE$.
\end{proof}

\begin{definition}\label{def-minimal}
A homeomorphism $T:X\to X$ of a compact topological space $X$ onto itself 
is called \textbf{minimal} if there is no closed set $Y\subset X$ different 
from the empty set and $X$ such that $T(Y)=Y$.  An equivalent condition is 
that for any point $x\in X$ the \emph{two-sided orbit} $\{T^n(x)\mid 
n\in\bZ\}$ is dense in $X$.
\end{definition}

Minimality of a homeomorphism $T$ is a sign of very regular recurrence in 
the dynamical system defined by $T$.

\begin{proposition}\label{3-subshifts-minimal}
The subshifts $\si_{\{0,1\}}|\Om(\xiTM)$, $\si_{\{x,y\}}|\Om(\xipd)$ and 
$\si_{\{a,c,d\}}|\Om(\xiGE)$ are minimal.
\end{proposition}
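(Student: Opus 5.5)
We reduce minimality of $\si_{\cA}|\Om(\xi)$ to a uniform recurrence property of $\xi$: every subword of $\xi$ occurs in $\xi$ with bounded gaps. Call $\xi$ \emph{uniformly recurrent} if for every subword $w$ of $\xi$ there is $L$ such that every subword of $\xi$ of length $L$ contains $w$.

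\emph{Step 1 (general criterion).} Assume $\xi$ is uniformly recurrent. Take any $\om\in\Om(\xi)$ and any nonempty open set $U$ meeting $\Om(\xi)$; we show the orbit of $\om$ meets $U$. It suffices to take $U=\Sigma_{w,n}$, the cylinder prescribing a word $w$ at position $n$. Since $U\cap\Om(\xi)\ne\varnothing$, the word $w$ is a subword of some string in $\Om(\xi)$, hence a subword of $\xi$. Let $L$ be its recurrence constant. The word $y_1\ldots y_L$ of $\om$ is a subword of $\xi$, so it contains $w$, say at position $m$. Then $\si^{m-n}(\om)\in U$. This shows density of every orbit, which is the equivalent form of minimality in Definition \ref{def-minimal}.

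\emph{Step 2 (uniform recurrence of the three strings).} We use the structure common to all three substitutions: $\xi$ is a concatenation of blocks $\subst^k(z)$ for letters $z$, and each $\subst^k(z)$ contains a fixed block.

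For $\xiTM$, any subword $w$ lies inside the prefix $\TM^k(0)$ for some $k$. By Lemma \ref{xi-TM} (or the self-similarity $\xiTM=\TM^k(x_1)\TM^k(x_2)\ldots$), $\xiTM$ is a concatenation of blocks $\TM^k(0)$ and $\TM^k(1)$. Moreover $\TM^{k+1}(1)=\TM^k(1)\TM^k(0)$ contains $\TM^k(0)$. So every aligned block of length $2^{k+1}$ contains $w$, and $L=2^{k+2}$ works.

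For $\xipd$, the same argument applies with $\pd^{k+1}(y)=\pd^k(x)\pd^k(x)$, which contains $\pd^k(x)$.

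For $\xiGE$, Lemma \ref{xi-GE} is more convenient. The letter $x_n$ depends only on the $2$-adic valuation of $n$, so $x_{n+2^{k+1}}=x_n$ whenever $n$ is not divisible by $2^{k+1}$. Hence within any window of the form $j2^{k+1}+1,\dots,(j+1)2^{k+1}-1$ the string repeats the prefix $w_{k+1}=x_1\ldots x_{2^{k+1}-1}$. Any subword $w$ of $\xiGE$ lies in some $w_{k+1}$, and every subword of length $2^{k+2}$ contains such a full window, so $L=2^{k+2}$ works. (Primitivity fails for $\sub$, so this direct Toeplitz argument replaces the block argument.)

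\emph{Main obstacle.} The main subtlety is in Step 1: passing from the uniform recurrence of $\xi$ to density of orbits of arbitrary points of $\Om(\xi)$. It relies on the fact, from Lemma \ref{omega-xi}, that every subword of a point of $\Om(\xi)$ is a subword of $\xi$, and that cylinders meeting $\Om(\xi)$ prescribe subwords of $\xi$. We also need to keep the index bookkeeping consistent, and to note that the recurrence constant does not depend on position because the aligned blocks tile $\xi$ from index $1$ on.
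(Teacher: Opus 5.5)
Your proof is correct, but it takes a somewhat different route from the paper's.

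\textbf{The reduction.} The paper proves Lemma~\ref{same-subwords} and the two-sided criterion Lemma~\ref{subwords-in-subwords}, whose proof uses compactness and the limit-point description of $\Om(\xi)$ from Lemma~\ref{limit-points}. It then uses only the ``if'' direction. Your Step~1 proves just that direction, directly as density of every orbit via the cylinders $\Sigma_{w,n}$. This is shorter and fully sufficient here.

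\textbf{Uniform recurrence.} The paper gives a single arithmetic argument for all three strings. From the explicit descriptions in Lemmas~\ref{xi-TM}, \ref{xi-pd} and~\ref{xi-GE}, it shows that the prefix of length $2^k-1$ reappears at position $2^km+1$ for every $m$ in a set $Z_k$. Here $Z_k=\bN$ for $\xipd$ and $\xiGE$. For $\xiTM$, $Z_k$ contains $2s$ or $2s+1$ for each $s$, so it has gaps of at most $2$; then $N=2^{k+2}$ works.

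You instead use the self-similarity $\xi=\subst^{k+1}(x_1)\subst^{k+1}(x_2)\ldots$ for $\TM$ and $\pd$, together with the observation that both level-$(k+1)$ blocks contain $\subst^k(z)$ for the initial letter $z$. This avoids the binary-digit bookkeeping and is the standard argument for primitive constant-length substitutions. It does not apply to the non-primitive, non-constant-length $\sub$, where you fall back on the Toeplitz structure of $\xiGE$, which is exactly the paper's argument in that case.

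\textbf{Trade-off.} The paper gains a uniform treatment of the three strings and a reusable if-and-only-if criterion. Your version gains brevity and a more conceptual explanation for the Thue--Morse and period doubling cases.
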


Note that minimality of the first two subshifts in the proposition is well 
known.  We are going to present a unified proof for all three subshifts.
First let us observe that minimality of a subshift can be described in 
purely combinatorial terms.

\begin{lemma}\label{same-subwords}
A subshift $\si_{\cA}|Y$ is minimal if and only if any two bi-infinite 
strings in $Y$ have the same subwords.
\end{lemma}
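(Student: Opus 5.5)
I will prove the two implications separately, using the characterization of minimality via dense two-sided orbits from Definition \ref{def-minimal}. Throughout, the key observation is that for $\om,\eta\in\cA^{\bZ}$, the string $\eta$ lies in the closure of the two-sided orbit $\{\si^n(\om)\mid n\in\bZ\}$ if and only if every subword of $\eta$ is a subword of $\om$. Indeed, a basic neighborhood of $\eta$ is a cylinder fixing the coordinates $-k,\dots,k$, i.e., fixing the central word $y_{-k}\ldots y_k$. Some shift $\si^n(\om)$ lies in it exactly when this word occurs in $\om$ at position $n-k$. Since every subword of $\eta$ sits inside some central word $y_{-k}\ldots y_k$, the equivalence follows. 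I will state and verify this first as a short preliminary claim.

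For the ``if'' direction, assume any two strings in $Y$ have the same subwords, and fix $\om\in Y$. For any $\eta\in Y$, every subword of $\eta$ is a subword of $\om$, so by the claim $\eta$ is in the closure of the orbit of $\om$. Thus the orbit of $\om$ is dense in $Y$ for every $\om$, and the shift is minimal. If one wants to work directly with the closed-set definition rather than the equivalent orbit condition, I will argue as follows. Let $Z\subset Y$ be nonempty, closed and satisfy $\si(Z)=Z$, and pick $\om\in Z$. Then $Z$ contains the closure of the orbit of $\om$, which is all of $Y$ by the above.

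For the ``only if'' direction, assume the subshift is minimal and take $\om,\eta\in Y$. The closure of the two-sided orbit of $\om$ is a nonempty closed subset of $Y$ that is invariant under $\si$ (it satisfies $\si(Z)=Z$ because $\si$ is a homeomorphism permuting the orbit). By minimality it equals $Y$, so it contains $\eta$. By the claim, every subword of $\eta$ is a subword of $\om$, and by symmetry the converse inclusion holds as well.

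There is no real obstacle. The only point needing care is the preliminary claim, in particular the correct indexing of shifted cylinders, and the remark that subwords of $\eta$ are contained in central windows $y_{-k}\ldots y_k$.
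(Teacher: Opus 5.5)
Your proof is correct, but it is organized differently from the paper's. You isolate one preliminary claim: $\eta$ lies in the closure of the two-sided orbit of $\om$ exactly when every subword of $\eta$ is a subword of $\om$. Both implications then follow from it. For ``only if'' you apply minimality to the orbit closure. For ``if'' you note that any nonempty closed invariant $Z$ contains the closure of the orbit of each of its points.

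The paper never looks at orbits. For each word $w$ it introduces the set $U_w$ of strings containing $w$, a union of cylinders $\Sigma_{w,n}$ and hence open and shift-invariant. It then gets ``minimal $\Rightarrow$ same subwords'' because $Y\setminus U_w$ is closed and invariant, hence empty or all of $Y$. The converse is by contraposition: it picks $\om\in Y\setminus Y_0$ and a cylinder $\Sigma_{w,n}\ni\om$ disjoint from $Y_0$, and uses invariance of $Y_0$ to conclude that $U_w\cap Y_0=\varnothing$.

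The paper's route works only with the closed-set definition of minimality and avoids neighborhood bases at a point. Yours gives a sharper pointwise statement: the orbit closure of $\om$ is determined by its set of subwords. It also recovers the orbit-density form of minimality mentioned after Definition~\ref{def-minimal}. Both arguments rest on the same translation between cylinders and occurrences of words, so neither is harder than the other.
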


\begin{proof}
For any nonempty word $w\in\cA^*$ of length $k\ge1$ and any 
$n\in\bZ$, let $\Sigma_{w,n}$ be the set of all bi-infinite strings $\ldots 
y_{-1}y_0.y_1y_2\ldots$ in $\cA^{\bZ}$ such that $y_ny_{n+1}\ldots 
y_{n+k-1}=w$.  Each of the sets $\Sigma_{w,n}$ is a cylinder.  It is easy 
to observe that these cylinders form a base of the topology on 
$\cA^{\bZ}$.  Further, for any $w\in\cA^*\setminus\{\varnothing\}$ let 
$U_w$ be the set of all strings in $\cA^{\bZ}$ that admit $w$ as a 
subword.  Obviously, $U_w$ is the union of the cylinders $\Sigma_{w,n}$ 
over all $n\in\bZ$.  Hence $U_w$ is an open set.  Besides, 
$\si_{\cA}(\Sigma_{w,n})=\Sigma_{w,n-1}$ for all $n\in\bZ$, which implies 
that $\si_{\cA}(U_w)=U_w$.

Consider an arbitrary subshift $\si_{\cA}|Y$.  It follows from the above 
that for any nonempty word $w\in\cA^*$, the set difference $Y\setminus U_w$ 
is a closed set and $\si_{\cA}(Y\setminus U_w)=Y\setminus U_w$.  Assuming 
the subshift $\si_{\cA}|Y$ is minimal, we obtain that $Y\setminus U_w$ is 
either the empty set or $Y$.  Hence $w$ is either a subword of every string 
in $Y$ or a subword of no string in $Y$.  Since this holds for any nonempty 
word $w$, we conclude that any two strings in $Y$ have the same subwords.

Now assume the subshift $\si_{\cA}|Y$ is not minimal, that is, there is a 
nonempty closed set $Y_0\subset Y$ different from $Y$ such that 
$\si_{\cA}(Y_0)=Y_0$.  Take any point $\om\in Y\setminus Y_0$.  Since the 
cylinders of the form $\Sigma_{w,n}$ constitute a base of the topology, we 
can find one of them that contains $\om$ and is disjoint from the closed 
set $Y_0$.  Since $\si_{\cA}(Y_0)=Y_0$, it follows that the set $U_w$ is 
also disjoint from $Y_0$.  We obtain that $w$ is a subword of the string 
$\om$ but not a subword of any string in $Y_0$.  Thus two strings in $Y$ do 
not always have the same subwords.
\end{proof}

For subshifts of the form $\si_{\cA}|\Om(\xi)$, where $\xi\in\cA^{\bZ}$, 
minimality is equivalent to a very restrictive combinatorial property of 
the string $\xi$ (usually called \emph{uniform recurrence}).

\begin{lemma}\label{subwords-in-subwords}
The subshift $\si_{\cA}|\Om(\xi)$, where $\xi\in\cA^{\bZ}$, is minimal if 
and only if for any nonempty word $w\in\cA^*$ that occurs infinitely many 
times as a subword of $\xi$, there exists $N\ge1$ such that $w$ is a 
subword of every word of length at least $N$ that is a subword of the 
string $\xi$.
\end{lemma}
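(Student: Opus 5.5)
We prove the two implications separately, using Lemma \ref{same-subwords} (minimality is equivalent to all strings in $\Om(\xi)$ having the same subwords) and Lemmas \ref{omega-xi} and \ref{limit-points}. (Here $\xi$ should be read as an element of $\cA^{\bN}$, as in the definition of $\Om(\xi)$.)

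\emph{Sufficiency.} Assume the uniform recurrence condition holds. Take any $\om,\om'\in\Om(\xi)$ and any subword $w$ of $\om$. By Lemma \ref{omega-xi}, $w$ occurs infinitely often in $\xi$, so there is an $N$ as in the hypothesis. The string $\om'$ has a subword $u$ of length $N$. Since $\om'\in\Om(\xi)$, $u$ is a subword of $\xi$. Hence $w$ is a subword of $u$, and therefore of $\om'$. By symmetry $\om$ and $\om'$ have the same subwords, and Lemma \ref{same-subwords} gives minimality.

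\emph{Necessity.} We argue by contradiction. Suppose the subshift is minimal, but some word $w$ occurring infinitely often in $\xi$ fails the condition. Then for every $N$ there is a subword $u_N$ of $\xi$ of length at least $N$ that avoids $w$. Trimming if necessary, we may take $u_N$ of length exactly $2N+1$, since subwords of words avoiding $w$ also avoid $w$.

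The main obstacle is that the $u_N$ might occur only finitely often in $\xi$, for instance near the beginning, so a limit need not obviously lie in $\Om(\xi)$. We handle this with Lemma \ref{limit-points} as follows.

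\begin{itemize}
\item If infinitely many $u_N$ have occurrences at positions tending to infinity, say occurrences $x_{p_N}\ldots x_{p_N+2N}$ with $p_N\to\infty$, fix any $\zeta\in\cA^{-\bN}$ and consider $\si^{p_N+N}(\zeta.\xi)$.
\item A subsequence of these converges, and by Lemma \ref{limit-points} the limit $\om$ lies in $\Om(\xi)$.
\item Every finite window of $\om$ is eventually contained in the central part of $u_N$, so $\om$ avoids $w$.
\end{itemize}

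Otherwise, the occurrence positions of all $u_N$ stay bounded by some $P$, and we reduce to the previous case. Choose the occurrence of $u_N$ starting at position at most $P$ and take the middle subword $v_N$ of $u_N$ of length about $N$, which starts at a position at least $N/2$. Each $v_N$ avoids $w$ and occurs at positions tending to infinity, so the argument above applies with $v_N$ in place of $u_N$.

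In either case we obtain $\om\in\Om(\xi)$ that does not contain $w$. On the other hand, since $w$ occurs infinitely often in $\xi$, the argument of Lemma \ref{limit-points} (a limit of shifts centred at occurrences of $w$) produces some $\om''\in\Om(\xi)$ that contains $w$. This contradicts Lemma \ref{same-subwords}.
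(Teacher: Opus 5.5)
Your proof is correct and follows the paper's argument: sufficiency is identical, and for necessity the paper likewise builds one limit point of the shifts $\si^{n}(\zeta.\xi)$ that contains $w$ and another, taken at the centres of long $w$-free subwords, that avoids $w$, and then invokes Lemma \ref{same-subwords}. Your case split is unnecessary, since the shift index $p_N+N\ge N+1$ tends to infinity whatever $p_N$ is; the paper shifts by $n_{2k}+k$ for exactly this reason.
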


\begin{proof}
Take any left-infinite string $\zeta\in\cA^{-\bN}$.  Let $\ldots 
x_{-1}x_0.x_1x_2\ldots$ be the bi-infinite string $\zeta.\xi$.  First we 
show that any nonempty word $w$ that occurs infinitely often as a subword 
of $\xi$ is also a subword of some string $\om\in\Om(\xi)$.  Let $s$ be the 
length of $w$.  There is a sequence of integers $\{n_k\}$, $0\le 
n_1<n_2<n_3<\ldots$, such that $x_{n_k+1}x_{n_k+2}\ldots x_{n_k+s}=w$ for 
all $k\ge1$.  Since $\cA^{\bZ}$ is compact, the sequence 
$\{\si^{n_k}(\zeta.\xi)\}_{k\ge1}$ has a limit point $\om$.  By 
construction, $\om=\zeta_1.w\xi_1$ for some $\xi_1\in\cA^{\bN}$ and 
$\zeta_1\in\cA^{-\bN}$.  In particular, $w$ is a subword of the string 
$\om$.  Clearly, $\om$ is also a limit point of the sequence 
$\zeta.\xi,\si_{\cA}(\zeta.\xi),\si_{\cA}^2(\zeta.\xi),\dots$.  Hence 
$\om\in\Om(\xi)$ due to Lemma \ref{limit-points}.

Next we show that for any word $w$ that is not a subword of all 
sufficiently long subwords of $\xi$, there is a string $\om'\in\Om(\xi)$ 
such that $w$ is not a subword of $\om'$.  For any $N\ge1$ let $w_N$ be a 
subword of $\xi$ of length at least $N$ that does not admit $w$ as a 
subword.  We have $w_N=x_{n_N}x_{n_N+1}\ldots x_{m_N}$ for some $n_N$ and 
$m_N$, where $n_N\ge1$ and $m_N-n_N\ge N-1$.  The sequence 
$\{\si^{n_{2k}+k}(\zeta.\xi)\}_{k\ge1}$ has a limit point $\om'=\ldots 
y_{-1}y_0.y_1y_2\ldots$.  Since $n_{2k}+k\to\infty$ as $k\to\infty$, it 
follows that the string $\om'$ is also a limit point of the sequence 
$\zeta.\xi,\si_{\cA}(\zeta.\xi),\si_{\cA}^2(\zeta.\xi),\dots$.  Hence 
$\om'\in\Om(\xi)$ due to Lemma \ref{limit-points}.  Any subword $u$ of 
$\om'$ is also a subword of the word $u_s=y_{-s}y_{-s+1}\ldots y_{s-1}y_s$ 
for some $s\ge1$.  By construction, $x_{n_{2k}+k-s}\,x_{n_{2k}+k-s+1}\ldots 
x_{n_{2k}+k+s}=u_s$ for infinitely many values of $k$.  If $k\ge s+1$ then 
$n_{2k}+k-s>n_{2k}$ and $n_{2k}+k+s\le n_{2k}+2k-1\le m_{2k}$.  It follows 
that $u_s$ is a subword of the word $w_{2k}$ for some $k$.  Then $u$ is 
also a subword of $w_{2k}$ so that $u\ne w$.

Now we can prove the lemma.  Assume that the string $\xi$ does not have the 
combinatorial property formulated in the lemma.  That is, some nonempty 
word $w$ occurs infinitely many times as a subword of $\xi$, but $w$ is not 
a subword of all sufficiently long subwords of $\xi$.  By the above there 
exist strings $\om,\om'\in\Om(\xi)$ such that $w$ is a subword of $\om$ but 
not a subword of $\om'$.  Then it follows from Lemma \ref{same-subwords} 
that the subshift $\si_{\cA}|\Om(\xi)$ is not minimal.  Conversely, assume 
that the string $\xi$ has the combinatorial property formulated in the 
lemma.  Take an arbitrary $\om\in\Om(\xi)$ and let $w$ be a subword of 
$\om$.  By Lemma \ref{omega-xi}, the word $w$ occurs infinitely often as a 
subword of $\xi$.  Hence for some $N\ge1$ every subword of $\xi$ of length 
at least $N$ admits $w$ as a subword.  Clearly, any bi-infinite string 
$\om'\in\Om(\xi)$ has a subword $w'$ of length $N$.  Since $w'$ is also a 
subword of $\xi$, it admits $w$ as a subword.  Then $w$ is a subword of 
$\om'$ as well.  We obtain that every subword of $\om$ is, in fact, a 
subword of all strings in $\Om(\xi)$.  As the choice of $\om$ was 
arbitrary, we conclude that any two strings in $\Om(\xi)$ have the same 
subwords.  By Lemma \ref{same-subwords}, the subshift $\si_{\cA}|\Om(\xi)$ 
is minimal.
\end{proof}

\begin{proof}[Proof of Proposition \ref{3-subshifts-minimal}]
Let $\xi=x_1x_2x_3\ldots$, where $\xi=\xiTM$, $\xipd$ or $\xiGE$.  In the 
cases $\xi=\xipd$ and $\xi=\xiGE$, it follows from respectively Lemmas 
\ref{xi-pd} and \ref{xi-GE} that $x_{n_1}=x_{n_2}$ whenever $1\le n_1<2^k$ 
and $n_2-n_1=2^km$ for some integers $k,m\ge1$.  As a consequence, the word 
$x_1x_2\ldots x_{2^k-1}$ coincides with $x_{2^km+1}x_{2^km+2}\ldots 
x_{2^km+2^k-1}$ for any $k,m\ge1$.

In the case $\xi=\xiTM$, it follows from Lemma \ref{xi-TM} that 
$x_{n_1}=x_{n_2}$ whenever $1\le n_1\le2^k$ and $n_2-n_1=2^km$ for some 
integers $k,m\ge1$ such that the binary expansion of the number $m$ has an 
even number of 1s.  Note that for any $s\ge1$ exactly one of the numbers 
$2s$ and $2s+1$ has an even number of 1s in its binary expansion.  As a 
consequence, the word $x_1x_2\ldots x_{2^k}$ coincides with the word 
$x_{M+1}x_{M+2}\ldots x_{M+2^k}$ for $M=2^k(2s)$ or $M=2^k(2s+1)$.

For any $k\ge1$ let $Z_k$ be the set of all integers $m\ge1$ such that the 
prefix of length $2^k-1$ of the string $\xi$ coincides with the subword 
$x_{2^km+1}x_{2^km+2}\ldots x_{2^km+2^k-1}$.  By the above, $Z_k=\bN$ if 
$\xi=\xipd$ or $\xiGE$.  If $\xi=\xiTM$ then for any $s\ge1$ the set $Z_k$ 
contains $2s$ or $2s+1$.  It follows that $Z_k$ contains at least one of
any three consecutive positive integers.

Now we are going to show that the string $\xi$ has the combinatorial 
property formulated in Lemma \ref{subwords-in-subwords}, which will 
complete the proof of the proposition.  Any subword $w$ of $\xi$ is also a 
subword of the prefix $x_1x_2\ldots x_{2^k-1}$ for some $k$.  Let 
$N=2^{k+2}$ and consider an arbitrary subword $w'$ of $\xi$ of length at 
least $N$.  We have $w'=x_{n_1}x_{n_1+1}\ldots x_{n_2}$ for some $n_1$ and 
$n_2$, where $n_1\ge1$ and $n_2-n_1\ge N-1$.  Let $2^km$ be the largest 
multiple of $2^k$ not exceeding $n_2$.  Then $n_2\le 2^k(m+1)-1$, which 
implies that $n_1\le n_2-N+1\le2^k(m-3)$.  In particular, $m-3\ge1$.  Hence 
at least one of the integers $m-3$, $m-2$ and $m-1$ belongs to the set 
$Z_k$.  Let $m_0$ be such an integer.  Then $x_{2^km_0+1}x_{2^km_0+2}\ldots 
x_{2^km_0+2^k-1}=x_1x_2\ldots x_{2^k-1}$.  Since $2^km_0+1>2^k(m-3)\ge n_1$ 
and $2^km_0+2^k-1\le 2^km-1<n_2$, it follows that $x_1x_2\ldots x_{2^k-1}$ 
is a subword of $w'$.  Then $w$ is a subword of $w'$ as well.
\end{proof}

In the remainder of this section we are going to establish relations 
between the subshifts $\si_{\{0,1\}}|\Om(\xiTM)$,
$\si_{\{x,y\}}|\Om(\xipd)$ and $\si_{\{a,c,d\}}|\Om(\xiGE)$.

\begin{definition}\label{def-factor}
Let $T_1:X_1\to X_1$ and $T_2:X_2\to X_2$ be homeomorphisms of topological 
spaces.  Suppose there exists a continuous map $f:X_1\to X_2$ such that $f$ 
is onto and $fT_1=T_2f$ so that the following diagram is commutative:
\[\begin{array}{ccc}
X_1 & \stackrel{T_1}{\longrightarrow} & X_1\\[0.2em]
f\Big\downarrow\phantom{f} && \phantom{f}\Big\downarrow f\\[0.25em]
X_2 & \stackrel{T_2}{\longrightarrow} & X_2
\end{array}
\]
Then we say that $T_2$ is a \textbf{continuous factor} of $T_1$ while $T_1$ 
is a \textbf{continuous extension} of $T_2$.  In the case the map $f$ is 
$k$-to-one for some $k\ge1$, we say that $T_2$ is a $k$-to-one continuous 
factor of $T_1$.  In the case $f$ is a homeomorphism, we say that the map 
$T_2$ is \textbf{topologically conjugate} to $T_1$. 
\end{definition}

\begin{proposition}\label{3-subshifts-factors}
The subshift $\si_{\{x,y\}}|\Om(\xipd)$ is a two-to-one continuous factor 
of the subshift $\si_{\{0,1\}}|\Om(\xiTM)$.  The subshift 
$\si_{\{a,c,d\}}|\Om(\xiGE)$ is topologically conjugate to 
$\si_{\{x,y\}}|\Om(\xipd)$.
\end{proposition}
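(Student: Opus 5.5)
Both parts can be handled with sliding block codes: show that each code carries the generating string (or its subwords) onto the target string, and then use minimality from Proposition \ref{3-subshifts-minimal} to get surjectivity.

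\emph{Thue-Morse to period doubling.} Define $D:\{0,1\}^{\bZ}\to\{x,y\}^{\bZ}$ by letting the $n$-th letter of $D(\om)$ be $x$ if $\om(n)\ne\om(n+1)$ and $y$ otherwise. This map is continuous and satisfies $D\si=\si D$. The key computation applies the same rule to $\xiTM$ and shows that it produces $\xipd$. By Lemma \ref{xi-TM}, the symbols $n$ and $n+1$ of $\xiTM$ are governed by the parities of the number of 1s in $n-1$ and $n$. Write $n=2^km$ with $m$ odd. Passing from $n-1=\ldots01^k$ to $n=\ldots10^k$ changes the number of 1s by $1-k$. So the two symbols differ exactly when $k$ is even, which by Lemma \ref{xi-pd} is exactly when the $n$-th letter of $\xipd$ is $x$.

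Consequently, for any $\om\in\Om(\xiTM)$, every subword of $D(\om)$ is the $D$-image of a subword of $\om$. That subword is a subword of $\xiTM$, so its image is a subword of $\xipd$. Hence $D(\Om(\xiTM))\subset\Om(\xipd)$. The image is compact, nonempty and shift-invariant, so it equals $\Om(\xipd)$ by minimality. For the two-to-one claim, note that $D(\om)=D(\om')$ holds precisely when $\om'=\om$ or $\om'$ is the letterwise complement $\bar\om$. The complement map preserves $\Om(\xiTM)$ by Lemma \ref{TM-unique}, since it sends subwords of $\xiTM$ to subwords of $\xiTM'$. Finally, $\om\ne\bar\om$ always, so every fibre has exactly two points.

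\emph{Muntyan to period doubling.} Comparing Lemmas \ref{xi-GE} and \ref{xi-pd} shows that $\xipd=\tau(\xiGE)$ for the letter-to-letter map $\tau(a)=x$, $\tau(c)=y$, $\tau(d)=x$. Indeed, odd positions carry $a$ and $x$, positions with $k$ odd carry $c$ and $y$, and positions with $k$ even and positive carry $d$ and $x$. As above, the induced $1$-block code $\tau$ maps $\Om(\xiGE)$ into $\Om(\xipd)$, and minimality makes it onto.

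The main obstacle is injectivity, i.e.\ recovering the lost distinction between $a$ and $d$. For this I would use a local inverse rule. In $\xipd$ the letter $y$ occurs only at even positions, and it occurs at every position $\equiv2\pmod 4$. Hence every window of $4$ consecutive letters contains a $y$, and the parity of a position $n$ can be read off from its distance to any $y$ in the window $[n-3,n+3]$. Define the inverse block code as follows: at position $n$, a letter $y$ becomes $c$; a letter $x$ becomes $a$ if its distance to some $y$ within distance $3$ is odd, and $d$ otherwise.

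This rule is consistent on every subword of $\xipd$ of length $7$. Every subword of a point of $\Om(\xipd)$ is such a subword, so the rule is well defined on $\Om(\xipd)$. The resulting map is continuous, commutes with $\si$, and inverts $\tau$ on $\Om(\xiGE)$. It inverts $\tau$ because on subwords of $\xiGE$ it recovers the original letters, as one checks against Lemma \ref{xi-GE}. Therefore $\tau$ is a homeomorphism, and the two subshifts are topologically conjugate.
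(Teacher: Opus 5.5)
Your proof is correct. Its skeleton coincides with the paper's: your $D$ and $\tau$ are exactly the paper's block maps $f_{\phi_1}$ and $f_{\phi_2}$. The binary digit-counting identity linking $\xiTM$ to $\xipd$ is the same, and so is the complement argument for the two-to-one property.

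Two sub-steps are done differently.

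\emph{Surjectivity.} You show via subwords that the image lies in the target subshift. You then invoke minimality of $\si|\Om(\xipd)$ from Proposition \ref{3-subshifts-minimal}, which is proved earlier, so there is no circularity. The paper instead proves Lemma \ref{block-factor-0} using the limit-point description in Lemma \ref{limit-points}. That gives $f_\phi(\Om(\xi_1))=\Om(\xi_2)$ for arbitrary strings, with no minimality needed.

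\emph{Injectivity of $\tau$.} You write down an explicit $7$-block inverse code. It uses that the $y$'s of $\xipd$ occupy even positions and that every window of $4$ letters contains one. As a result, continuity of the inverse is automatic and the conjugacy is exhibited as a sliding block code in both directions. The paper instead fixes a single occurrence of $y$ in the image. It then uses that the words $aa,cc,cd,dc,dd$ never occur in $\xiGE$, which forces the $a$'s of any preimage to alternate, so the preimage is unique. Continuity of the inverse then comes from compactness.

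Your version is slightly more informative, since it gives an explicit inverse with a bounded window. The paper's surjectivity lemma is more general and is reused elsewhere in the paper.
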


To prove Proposition \ref{3-subshifts-factors}, we are going to use the 
so-called \emph{block factor maps}, which are helpful for establishing 
relations between subshifts.  Let $\cA_1$ and $\cA_2$ be two alphabets.  
Suppose that for some $k\ge1$ we have a function $\phi:\cA_1^k\to\cA_2$.  
This function induces a map $f_\phi:\cA_1^{\bZ}\to\cA_2^{\bZ}$ that sends 
any bi-infinite string $\ldots x_{-1}x_0.x_1x_2\ldots$ over the alphabet 
$\cA_1$ to a bi-infinite string $\ldots y_{-1}y_0.y_1y_2\ldots$ over 
$\cA_2$ given by $y_n=\phi(x_n,x_{n+1},\dots,x_{n+k-1})$ for all 
$n\in\bZ$.  It follows from the construction that for any cylinder 
$\Sigma\subset\cA_2^{\bZ}$ the pre-image $f_\phi^{-1}(\Sigma)$ is a finite 
union of cylinders in $\cA_1^{\bZ}$.  Hence the map $f_\phi$ is 
continuous.  Also by construction, $f_\phi$ intertwines the shifts 
$\si_{\cA_1}$ and $\si_{\cA_2}$, that is, 
$f_\phi\si_{\cA_1}=\si_{\cA_2}f_\phi$.  The map $f_\phi$ is called a 
$k$-block factor map.  Note that each shift is itself a $2$-block factor 
map.

\begin{lemma}\label{block-factor-0}
Given an infinite string $\xi_1=x_1x_2x_3\ldots\in\cA_1^{\bN}$, let 
$\xi_2=y_1y_2y_3\ldots$ be an infinite string in $\cA_2^{\bN}$ such that 
$y_n=\phi(x_n,x_{n+1},\dots,x_{n+k-1})$ for all $n\ge1$.  Then 
$f_\phi(\Om(\xi_1))=\Om(\xi_2)$.
\end{lemma}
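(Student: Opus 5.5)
We prove the two inclusions separately. The tool throughout is Lemma \ref{limit-points}, which identifies $\Om(\xi)$ with the set of limit points of the forward shift orbit of $\zeta.\xi$ for any left-infinite $\zeta$. The other ingredients are continuity of $f_\phi$ and the intertwining relation $f_\phi\si_{\cA_1}=\si_{\cA_2}f_\phi$.

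Fix any $\zeta_1\in\cA_1^{-\bN}$ and set $\om_1=\zeta_1.\xi_1$ and $\om_2=f_\phi(\om_1)$. By definition of $f_\phi$, the positions $n\ge1$ of $\om_2$ are $\phi(x_n,\dots,x_{n+k-1})=y_n$. Hence $\om_2=\zeta_2.\xi_2$ for some left-infinite $\zeta_2\in\cA_2^{-\bN}$, and we do not need to know $\zeta_2$. By intertwining,
\[
f_\phi(\si_{\cA_1}^m(\om_1))=\si_{\cA_2}^m(\om_2)\quad\text{for all } m\ge0.
\]

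For $f_\phi(\Om(\xi_1))\subset\Om(\xi_2)$, take $\om\in\Om(\xi_1)$. By Lemma \ref{limit-points}, $\om=\lim_i\si_{\cA_1}^{m_i}(\om_1)$ for some $m_i\to\infty$. Continuity gives $f_\phi(\om)=\lim_i\si_{\cA_2}^{m_i}(\om_2)$, which is a limit point of the orbit of $\zeta_2.\xi_2$. Applying Lemma \ref{limit-points} again, $f_\phi(\om)\in\Om(\xi_2)$.

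For the reverse inclusion, take $\eta\in\Om(\xi_2)$ and write $\eta=\lim_i\si_{\cA_2}^{m_i}(\om_2)$ with $m_i\to\infty$. By compactness of $\cA_1^{\bZ}$, the sequence $\si_{\cA_1}^{m_i}(\om_1)$ has a convergent subsequence with limit $\om$. This $\om$ is a limit point of the orbit of $\zeta_1.\xi_1$, because the exponents still tend to infinity, so $\om\in\Om(\xi_1)$ by Lemma \ref{limit-points}. Continuity along the subsequence gives $f_\phi(\om)=\eta$.

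The only point needing care is the claim that the limit of a subsequence of the orbit is a limit point in the sense of Lemma \ref{limit-points}. This holds because the exponents $m_i$ go to infinity, which is exactly how that lemma's proof treats limit points. Apart from this, the argument is a routine compactness–continuity one, and I expect no real obstacle.
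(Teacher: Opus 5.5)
Your proposal is correct and matches the paper's proof essentially step for step. You fix $\zeta_1$, observe that $f_\phi(\zeta_1.\xi_1)=\zeta_2.\xi_2$, and use Lemma \ref{limit-points} to identify both $\Om$ sets as limit-point sets. Intertwining plus continuity give the inclusion $f_\phi(\Om(\xi_1))\subset\Om(\xi_2)$, and compactness of $\cA_1^{\bZ}$, which yields a convergent subsequence, gives the reverse inclusion.
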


\begin{proof}
Take any left-infinite string $\zeta_1\in\cA_1^{-\bN}$.  Then 
$f_\phi(\zeta_1.\xi_1)=\zeta_2.\xi_2$ for some left-infinite string 
$\zeta_2\in\cA_2^{-\bN}$.  By Lemma \ref{limit-points}, $\Om(\xi_1)$ is the 
set of all limit points of the sequence $\zeta_1.\xi_1, 
\si_{\cA_1}(\zeta_1.\xi_1),\si_{\cA_1}^2(\zeta_1.\xi_1),\dots$ while 
$\Om(\xi_2)$ is the set of all limit points of the sequence $\zeta_2.\xi_2, 
\si_{\cA_2}(\zeta_2.\xi_2),\si_{\cA_2}^2(\zeta_2.\xi_2),\dots$.  Since 
$f_\phi\si_{\cA_1}=\si_{\cA_2}f_\phi$, it follows by induction on $m$ that 
$\si_{\cA_2}^m(\zeta_2.\xi_2)= 
f_\phi\bigl(\si_{\cA_1}^m(\zeta_1.\xi_1)\bigr)$ for all $m\ge0$.  Hence the 
first sequence is mapped by $f_\phi$ onto the second sequence.  Assume 
$\om\in\Om(\xi_1)$.  Then $\om$ is the limit of some subsequence 
$\{\si_{\cA_1}^{m_i}(\zeta_1.\xi_1)\}_{i\ge1}$ of the first sequence.  
Continuity of the map $f_\phi$ implies that 
$\si_{\cA_2}^{m_i}(\zeta_2.\xi_2)\to f_\phi(\om)$ as $i\to\infty$ so that 
$f_\phi(\om)\in\Om(\xi_2)$.  Therefore 
$f_\phi(\Om(\xi_1))\subset\Om(\xi_2)$.

Conversely, assume $\om'\in\Om(\xi_2)$.  Let 
$\{\si_{\cA_2}^{m_j}(\zeta_2.\xi_2)\}_{j\ge1}$ be a subsequence of the 
second sequence converging to $\om'$.  The respective subsequence 
$\{\si_{\cA_1}^{m_j}(\zeta_1.\xi_1)\}_{j\ge1}$ of the first sequence need 
not converge, but it has a limit point due to compactness of
$\cA_1^{\bZ}$.  Any limit point $\om$ is also a limit point of the 
first sequence so that $\om\in\Om(\xi_1)$.  Besides, $f_\phi(\om)=\om'$ due 
to continuity of the map $f_\phi$.  We conclude that $\Om(\xi_2)\subset 
f_\phi(\Om(\xi_1))$.  Thus $f_\phi(\Om(\xi_1))=\Om(\xi_2)$.
\end{proof}

First consider a function $\phi_1:\{0,1\}^2\to\{x,y\}$ given by 
$\phi_1(0,1)=\phi_1(1,0)=x$ and $\phi_1(0,0)=\phi_1(1,1)=y$.

\begin{lemma}\label{block-factor-1}
The $2$-block factor map $f_{\phi_1}$ is two-to-one when restricted to the 
set $\Om(\xiTM)$, and $f_{\phi_1}(\Om(\xiTM))=\Om(\xipd)$.
\end{lemma}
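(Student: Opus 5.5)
Two things must be shown: $f_{\phi_1}(\Om(\xiTM))=\Om(\xipd)$, and every point of $\Om(\xipd)$ has exactly two preimages in $\Om(\xiTM)$.

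For the image, I will use Lemma \ref{block-factor-0}. Write $\xiTM=x_1x_2\ldots$ and set $y_n=\phi_1(x_n,x_{n+1})$. It suffices to check that $y_1y_2\ldots=\xipd$. By Lemma \ref{xi-TM}, $x_n$ is the parity of the number of 1s in the binary expansion of $n-1$. Passing from $n-1$ to $n$, where $n=2^km$ with $m$ odd, turns the $k$ trailing 1s of $n-1$ into 0s and one 0 into a 1. So the count of 1s changes by $1-k$. Hence $x_n\ne x_{n+1}$ exactly when $k$ is even, which gives $y_n=x$ for $k$ even and $y_n=y$ for $k$ odd. By Lemma \ref{xi-pd} this is precisely $\xipd$, and Lemma \ref{block-factor-0} then yields $f_{\phi_1}(\Om(\xiTM))=\Om(\xipd)$.

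For the two-to-one claim, note that $f_{\phi_1}(\om)$ records, for each $n$, whether $\om_n\ne\om_{n+1}$. So a string $\om\in\{0,1\}^{\bZ}$ is determined by $f_{\phi_1}(\om)$ together with the single letter $\om_0$. Thus every point of $\cA_2^{\bZ}$ has exactly two preimages in $\{0,1\}^{\bZ}$: $\om$ and its complement $\bar\om$, obtained by exchanging 0s and 1s. It remains to show that $\Om(\xiTM)$ is closed under complementation. The argument in the proof of Lemma \ref{TM-unique} shows that the complement of any subword of $\xiTM$ is a subword of $\xiTM'$, and that $\xiTM'$ and $\xiTM$ have the same subwords. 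Hence $\om\in\Om(\xiTM)$ implies $\bar\om\in\Om(\xiTM)$.

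So each point of $\Om(\xipd)$ has exactly the two preimages $\om$ and $\bar\om$ in $\Om(\xiTM)$. These are distinct, since $\bar\om_0\ne\om_0$, so the map is exactly two-to-one. I expect no real obstacle; the only computation of substance is the binary-carry count in the first step.
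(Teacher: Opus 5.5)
Your proof is correct and follows the paper's argument: the same binary-carry computation (you merge the paper's separate odd and even cases into a single count $1-k$) shows $\phi_1(x_n,x_{n+1})$ is the $n$-th letter of $\xipd$, Lemma \ref{block-factor-0} then gives the image, and the two-to-one property comes from $f_{\phi_1}$ identifying a string only with its complement. The one small difference is how you get closure of $\Om(\xiTM)$ under complementation: the paper applies Lemma \ref{block-factor-0} to the $1$-block map $f_{(01)}$, while you argue directly with subwords via Lemma \ref{TM-unique}, and both routes are fine.
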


\begin{proof}
For any $n\ge1$ let $x_n$ denote the $n$-th symbol of $\xiTM$ and $y_n$ 
denote the $n$-th symbol of $\xipd$.  First we show that 
$y_n=\phi_1(x_n,x_{n+1})$ for all $n\ge1$.  If $n$ is odd, the last digit 
in the binary expansion of $n$ is $1$.  Changing that digit to $0$, we 
obtain the binary expansion of $n-1$.  Hence the numbers of 1s in those 
expansions differ by $1$.  It follows from Lemma \ref{xi-TM} that 
$x_{n+1}\ne x_n$.  Then $\phi_1(x_n,x_{n+1})=x$.  Lemma \ref{xi-pd} implies 
that $y_n=x$ as well.

If $n$ is even, it is of the form $2^km$, where $k,m\ge1$ and $m$ is odd.  
Then the last $k+1$ digits in the binary expansion of the number $n$ are 1 
followed by $k$ 0s.  The last $k+1$ digits in the binary expansion of $n-1$ 
are 0 followed by $k$ 1s.  The other digits are the same in both 
expansions.  Hence the numbers of 1s in those expansions differ by $k-1$.  
Lemma \ref{xi-TM} implies that $x_{n+1}\ne x_n$ if $k$ is even, and 
$x_{n+1}=x_n$ if $k$ is odd.  By Lemma \ref{xi-pd}, $y_n=x$ if $k$ is even, 
and $y_n=y$ if $k$ is odd.  Either way, $\phi_1(x_n,x_{n+1})=y_n$.

Since $y_n=\phi_1(x_n,x_{n+1})$ for all $n\ge1$, Lemma \ref{block-factor-0} 
implies that $f_{\phi_1}(\Om(\xiTM))=\Om(\xipd)$.  It remains to show that 
the map $f_{\phi_1}$ is two-to-one when restricted to the set 
$\Om(\xiTM)$.  Consider the substitution $\mathsf{s}_{(01)}$ over the 
alphabet $\{0,1\}$ given by $\mathsf{s}_{(01)}(0)=1$, 
$\mathsf{s}_{(01)}(1)=0$.  Let $f_{(01)}$ denote the induced $1$-block 
factor map.  Lemma \ref{block-factor-0} implies that 
$f_{(01)}(\Om(\xiTM))=\Om\bigl(\mathsf{s}_{(01)}(\xiTM)\bigr)$.  It follows 
from Lemma \ref{TM-unique} that $\mathsf{s}_{(01)}(\xiTM)=\xiTM'$ and 
$\Om(\xiTM')=\Om(\xiTM)$.  Hence $f_{(01)}(\Om(\xiTM))=\Om(\xiTM)$.  Note 
that $\phi_1\bigl(\mathsf{s}_{(01)}(z_1),\mathsf{s}_{(01)}(z_2)\bigr)=
\phi_1(z_1,z_2)$ for all $z_1,z_2\in\{0,1\}$.  It follows that 
$f_{\phi_1}\bigl(f_{(01)}(\om)\bigr)=f_{\phi_1}(\om)$ for any string 
$\om=\ldots z_{-1}z_0.z_1z_2\ldots$ in $\{0,1\}^{\bZ}$.  Clearly, 
$f_{(01)}(\om)\ne\om$.  Now assume that $f_{\phi_1}(\om')=f_{\phi_1}(\om)$ 
for some $\om'=\ldots z'_{-1}z'_0.z'_1z'_2\ldots$.  Then 
$\phi_1(z_n,z_{n+1})=\phi_1(z'_n,z'_{n+1})$ for all $n\in\bZ$.  This 
implies that $z'_n=z_n$ if and only if $z'_{n+1}=z_{n+1}$.  It follows by 
induction that either $z'_n=z_n$ for all $n$ or $z'_n\ne z_n$ for all $n$.  
Hence $\om'=\om$ or $\om'=f_{(01)}(\om)$.  Since 
$f_{(01)}(\Om(\xiTM))=\Om(\xiTM)$, we conclude that the map $f_{\phi_1}$ is 
two-to-one when restricted to $\Om(\xiTM)$.
\end{proof}

Next consider a function $\phi_2:\{a,c,d\}\to\{x,y\}$ given by 
$\phi_2(a)=\phi_2(d)=x$ and $\phi_2(c)=y$.

\begin{lemma}\label{block-factor-2}
The $1$-block factor map $f_{\phi_2}$ is one-to-one when restricted to the 
set $\Om(\xiGE)$, and $f_{\phi_2}(\Om(\xiGE))=\Om(\xipd)$.
\end{lemma}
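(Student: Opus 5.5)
Two things need to be shown: that $f_{\phi_2}$ maps $\Om(\xiGE)$ onto $\Om(\xipd)$, and that it is injective there. For surjectivity I will use Lemma \ref{block-factor-0} with $k=1$. Write $\xiGE=x_1x_2\ldots$ and $\xipd=y_1y_2\ldots$; it suffices to check that $y_n=\phi_2(x_n)$ for every $n\ge1$. Put $n=2^km$ with $m$ odd, and compare Lemma \ref{xi-GE} with Lemma \ref{xi-pd}:
\begin{itemize}
\item if $k=0$, then $x_n=a$ and $y_n=x$;
\item if $k$ is odd, then $x_n=c$ and $y_n=y$;
\item if $k$ is even and positive, then $x_n=d$ and $y_n=x$.
\end{itemize}
Since $\phi_2(a)=\phi_2(d)=x$ and $\phi_2(c)=y$, all three cases match, so $f_{\phi_2}(\Om(\xiGE))=\Om(\xipd)$.

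For injectivity, the plan is to show that every $\om=\ldots z_{-1}z_0.z_1z_2\ldots\in\Om(\xiGE)$ can be recovered from its image $\ldots u_{-1}u_0.u_1u_2\ldots$. By Lemma \ref{xi-GE}, the letters $a$ and $c$, $d$ alternate in $\xiGE$. Hence every subword of $\xiGE$ of length $2$ consists of one letter $a$ and one letter from $\{c,d\}$, in some order. The same is true for every $\om\in\Om(\xiGE)$, because its subwords are subwords of $\xiGE$. Consequently, for each $\om$ there is a parity $p\in\{0,1\}$ such that $z_n=a$ exactly when $n\equiv p \pmod 2$.

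Once the parity $p$ is known, $\om$ is determined by the image: at positions with $n\equiv p$ we have $z_n=a$; at the other positions $z_n=c$ if $u_n=y$ and $z_n=d$ if $u_n=x$.

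It remains to recover $p$ from the image, and this is the main step. The letter $y$ appears only where $z_n=c$, so the positions of the letter $y$ in the image all have parity $1-p$. So I only need each $\om$ to contain at least one letter $c$, and this holds by minimality (Proposition \ref{3-subshifts-minimal} together with Lemma \ref{same-subwords}): $c$ is a subword of $\xiGE$, which forces it to be a subword of every string in $\Om(\xiGE)$. Now suppose $\om,\om'\in\Om(\xiGE)$ have the same image. The image contains some letter $y$, and its position has parity $1-p$ and also $1-p'$. Hence $p=p'$, and therefore $\om=\om'$.
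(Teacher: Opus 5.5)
Your proof is correct and is essentially the paper's argument. Surjectivity comes from Lemma \ref{block-factor-0} via $\phi_2(x_n)=y_n$. Injectivity comes from the alternation of $a$ with $\{c,d\}$, plus one occurrence of $y$ in the image to fix the parity.

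The only difference is how you obtain that occurrence of $y$, and your justification there needs one more step. Lemma \ref{same-subwords} by itself only says that strings of $\Om(\xiGE)$ share their subwords. You therefore also need one string in $\Om(\xiGE)$ that contains $c$, for example $\overleftarrow{\xiGE}c.\xiGE$ from Lemma \ref{xiGE-examples}; alternatively, apply Lemma \ref{subwords-in-subwords} directly. The paper avoids minimality altogether: $y$ sits at every position $n\equiv2\pmod4$ of $\xipd$, so $xxxx$ is not a subword, and hence every string in $\Om(\xipd)$ contains $y$.
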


\begin{proof}
For any $n\ge1$ let $x_n$ denote the $n$-th letter of $\xiGE$ and $y_n$ 
denote the $n$-th letter of $\xipd$.  It follows from the descriptions of 
the strings $\xipd$ and $\xiGE$ (Lemmas \ref{xi-pd} and \ref{xi-GE}) that 
$\phi_2(x_n)=y_n$ for all $n\ge1$.  Therefore 
$f_{\phi_2}(\Om(\xiGE))=\Om(\xipd)$ due to Lemma \ref{block-factor-0}.

It remains to prove that the map $f_{\phi_2}$ is one-to-one when restricted 
to $\Om(\xiGE)$.  Consider any strings $\om=\ldots z_{-1}z_0.z_1z_2\ldots 
\in\Om(\xipd)$ and $\om'=\ldots z'_{-1}z'_0.z'_1z'_2\ldots \in\Om(\xiGE)$ 
such that $f_{\phi_2}(\om')=\om$, that is, $\phi_2(z'_n)=z_n$ for all 
$n\in\bZ$.  We need to show that the string $\om'$ is uniquely determined 
by $\om$.  By Lemma \ref{xi-pd}, $y_n=y$ if $n$ is even but not divisible 
by $4$.  Since any four consecutive integers include such a number, it 
follows that $xxxx$ is not a subword of $\xipd$.  Then $xxxx$ is also not a 
subword of $\om$.  Hence the string $\om$ contains the letter $y$.  Choose 
any $k\in\bZ$ such that $z_k=y$.  Further, Lemma \ref{xi-GE} implies that 
$x_n=a$ if and only if $n$ is odd.  Therefore the words $aa$, $cc$, $cd$, 
$dc$ and $dd$ are not subwords of $\xiGE$.  Then they are also not subwords 
of $\om'$.  As a consequence, either $z'_n=a$ if $n$ is odd and $z'_n\ne a$ 
if $n$ is even, or else $z'_n=a$ if $n$ is even and $z'_n\ne a$ if $n$ is 
odd.  Since $\phi_2(z'_k)=z_k=y$, we obtain that $z'_k=c$.  It follows that 
$z'_n=a$ if and only if $n-k$ is odd.  In the case $n-k$ is even, we have 
$z'_n=c$ if $z_n=y$ and $z'_n=d$ if $z_n=x$.  Thus the string $\om'$ is 
uniquely determined by the string $\om$.
\end{proof}

Now we are ready to complete the proof of Proposition 
\ref{3-subshifts-factors}.

\begin{proof}[Proof of Proposition \ref{3-subshifts-factors}]
Since the map $f_{\phi_1}$ defined above is a $2$-block factor map of 
$\{0,1\}^{\bZ}$ to $\{x,y\}^{\bZ}$, it is continuous and satisfies 
$f_{\phi_1}\si_{\{0,1\}}=\si_{\{x,y\}}f_{\phi_1}$.  By Lemma 
\ref{block-factor-1}, $f_{\phi_1}(\Om(\xiTM))=\Om(\xipd)$.  Hence the 
subshift $\si_{\{x,y\}}|\Om(\xipd)$ is a continuous factor of the subshift 
$\si_{\{0,1\}}|\Om(\xiTM)$.  Besides, $f_{\phi_1}$ is two-to-one when 
restricted to $\Om(\xiTM)$ so that $\si_{\{x,y\}}|\Om(\xipd)$ is a 
two-to-one continuous factor of $\si_{\{0,1\}}|\Om(\xiTM)$.

The map $f_{\phi_2}$ defined above is a $1$-block factor map of 
$\{a,c,d\}^{\bZ}$ to $\{x,y\}^{\bZ}$.  Hence it is continuous and satisfies 
$f_{\phi_2}\si_{\{a,c,d\}}=\si_{\{x,y\}}f_{\phi_1}$.  By Lemma 
\ref{block-factor-2}, $f_{\phi_2}$ is one-to-one when restricted to the 
set $\Om(\xiGE)$, and $f_{\phi_2}(\Om(\xiGE))=\Om(\xipd)$.  Hence the 
restriction of $f_{\phi_2}$ to $\Om(\xiGE)$ is a continuous invertible map 
onto $\Om(\xipd)$.  Since $\Om(\xiGE)$ is a closed subset of the compact 
topological space $\{a,c,d\}^{\bZ}$, it is compact as well.  It follows 
that the restriction of $f_{\phi_2}$ to $\Om(\xiGE)$ is a homeomorphism 
onto $\Om(\xipd)$.  Therefore the subshift $\si_{\{x,y\}}|\Om(\xipd)$ is 
topologically conjugate to the subshift $\si_{\{a,c,d\}}|\Om(\xiGE)$.  Then 
$\si_{\{a,c,d\}}|\Om(\xiGE)$ is topologically conjugate to 
$\si_{\{x,y\}}|\Om(\xipd)$ as well.
\end{proof}

\section{Topological full groups}\label{sect-TFG}

Let $X$ be a \emph{Cantor set}.  This means that $X$ is a compact 
metrizable topological space that is totally disconnected and has no 
isolated points.  The topology on the Cantor set is generated by 
\emph{clopen} (i.e., both closed and open) sets.  For instance, the space 
$\cA^{\bZ}$ of bi-infinite strings over an alphabet $\cA$ is a Cantor set.  
Each cylinder is a clopen subset of $\cA^{\bZ}$.

\begin{definition}\label{def-TFG}
Let $T:X\to X$ be a homeomorphism of the Cantor set $X$ onto itself.  The 
\textbf{topological full group} of $T$, denoted $[[T]]$, is a transformation
group consisting of all homeomorphisms $F:X\to X$ that can be given by
$F(x)=T^{\nu(x)}(x)$, $x\in X$ for some continuous function $\nu:X\to\bZ$.
\end{definition}

If a function $\nu:X\to\bZ$ is continuous then it is locally constant and
takes only finitely many values.  Then nonempty level sets of $\nu$ form a 
finite partition of the Cantor set $X$ into clopen sets.  Thus every 
element of the topological full group $[[T]]$ is ``piecewise'' a power of 
$T$.  The group $[[T]]$ is countable (as there are only countably many 
clopen subsets of $X$).

The topological full group $[[T]]$ is \emph{ample}, which means that any 
homeomorphism of the Cantor set $X$ that locally coincides with elements of 
the group is itself in $[[T]]$.  Moreover, it is the smallest ample 
collection of homeomorphisms that contains the cyclic group generated by 
$T$.  For more details and proofs of these facts, see Section 2 of the 
paper \cite{GrigVor24}.

In the case $T$ is a minimal homeomorphism of a Cantor set (such a 
transformation is called a \emph{Cantor minimal system}), the (isomorphism 
class of the) topological full group $[[T]]$ is an almost complete 
invariant of the dynamics of $T$ as shown by Giordano, Putnam and Skau 
\cite{GPS}.

\begin{theorem}[\cite{GPS}]\label{flip-conjugacy}
Suppose $T_1:X_1\to X_1$ and $T_2:X_2\to X_2$ are minimal homeomorphisms of 
Cantor sets.  Then the topological full groups $[[T_1]]$ and $[[T_2]]$ are 
isomorphic if and only if $T_1$ is topologically conjugate to $T_2$ or 
$T_2^{-1}$.
\end{theorem}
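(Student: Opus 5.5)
The implication ``$\Leftarrow$'' is elementary and I would dispose of it first. If $h:X_1\to X_2$ is a homeomorphism with $hT_1h^{-1}=T_2$, then $F\mapsto hFh^{-1}$ sends $[[T_1]]$ onto $[[T_2]]$. Indeed, if $F(x)=T_1^{\nu(x)}(x)$ with $\nu$ continuous, then $hFh^{-1}(y)=T_2^{\nu(h^{-1}(y))}(y)$, and $\nu\circ h^{-1}$ is continuous. Moreover $[[T]]=[[T^{-1}]]$ for any $T$, since $T^{\nu(x)}=(T^{-1})^{-\nu(x)}$. So conjugacy of $T_1$ to $T_2^{-1}$ also yields an isomorphism.

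For ``$\Rightarrow$'' the plan has two stages.

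\emph{Stage 1: spatial realization.} Let $\Phi:[[T_1]]\to[[T_2]]$ be an isomorphism. I would show that $\Phi$ is implemented by a homeomorphism $h:X_1\to X_2$, that is, $\Phi(F)=hFh^{-1}$. The approach is to recover clopen sets group-theoretically. For a clopen set $U$ with $T(U)\cap U=\emptyset$, the involution swapping $U$ and $T(U)$ (identity elsewhere) lies in $[[T]]$. Minimality guarantees that the group has many such involutions, so its action is ``locally rich''. Using commutation relations among involutions, I would characterize, purely algebraically, when two involutions have disjoint supports and when one support is contained in another. This identifies the Boolean algebra of clopen sets, or at least a base of it, intrinsically in terms of the group. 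By Stone duality one then obtains a homeomorphism $h$ with $\Phi(F)=hFh^{-1}$. This is the Rubin-type reconstruction step, and I expect it to be the main obstacle: one must find an algebraic description of ``support'' that is robust enough, which is where minimality (no finite orbits, abundance of small involutions) is used.

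\emph{Stage 2: from continuous orbit equivalence to flip conjugacy.} Put $S=hT_1h^{-1}$. By Stage 1, $S=\Phi(T_1)\in[[T_2]]$, so $S(y)=T_2^{n(y)}(y)$ with $n$ continuous. Symmetrically, $T_2=\Phi(\Phi^{-1}(T_2))$ is conjugated by $h$ from an element of $[[T_1]]$, so $T_2(y)=S^{m(y)}(y)$ with $m$ continuous. Hence $S$ and $T_2$ are minimal homeomorphisms with the same orbits and continuous cocycles in both directions. On each orbit, identified with $\bZ$ via $T_2$, the map $S$ is a bijection with bounded jumps whose inverse also has bounded jumps.

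I would then argue, in the style of Boyle's lemma, that the cocycle $n$ is either identically $1$ or identically $-1$. Since $S$ generates the whole orbit as a single $\bZ$-orbit, the displacement $\sum_{k=0}^{N-1}n(S^k y)$ must tend to $+\infty$ or to $-\infty$, which fixes an orientation. Minimality makes this orientation constant, and bounded jumps together with bijectivity force local monotonicity. Next I would consider the clopen set $E=\{y: n(y)\ne\pm1\}$. If $E$ were nonempty, minimality would let one find a point where $S$ skips over some $T_2$-orbit point that must then be reached by a backward jump. This contradicts monotonicity of the ordering along the orbit, so $E=\emptyset$. Combined with the constant orientation, this gives $S=T_2$ or $S=T_2^{-1}$, i.e. $T_1$ is conjugate to $T_2$ or to $T_2^{-1}$.
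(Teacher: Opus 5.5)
\textbf{Verdict: Stage 2 tries to prove a false statement, so the proof of ``$\Rightarrow$'' has a genuine gap.} Your ``$\Leftarrow$'' direction is correct. Your two-stage plan for ``$\Rightarrow$'' is the Giordano--Putnam--Skau route: first realize $\Phi$ spatially, then pass from a continuous orbit equivalence to flip conjugacy. The paper does not prove this theorem; it only cites their result. Stage 1 is only a sketch of a substantial reconstruction theorem, but the strategy is sound.

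The problem is Stage 2, where you set out to show that $n\equiv1$ or $n\equiv-1$, i.e.\ that $hT_1h^{-1}=T_2^{\pm1}$ for the implementing homeomorphism $h$. This is false. Take $T_1=T_2=T$ and a clopen set $U$ with $T^{-1}(U)$, $U$, $T(U)$ pairwise disjoint (it exists since $T$ is aperiodic). Let $\delta=\delta_{U;T}\in[[T]]$ and let $\Phi(F)=\delta F\delta$ be the corresponding inner automorphism. Then $S=\Phi(T)=\delta T\delta$, whichever homeomorphism implements $\Phi$. For $x\in T^{-1}(U)$ we have $\delta(x)=x$, so $S(x)=\delta(T(x))=T^2(x)$. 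On $T(U)$ we have $S=T^{-1}$. So $n$ takes the values $2$ and $-1$, although every orbit is positively oriented: for $u\in U$, $S$ goes from $T^{-1}(u)$ to $T(u)$ and then back to $u$. Hence ``bounded jumps together with bijectivity force local monotonicity'' is false. The argument that $E=\{n\ne\pm1\}$ is empty cannot be repaired, because its conclusion fails.

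What is missing is Boyle's theorem: the conjugacy is in general a \emph{corrected} map $\psi h$, not $h$ itself. A sketch of the repair follows.
\begin{itemize}
\item Orientation. Let $K=\max|n|$. The net number of $S$-jumps across the gap between $T_2^{-1}(y)$ and $y$ depends only on $n$ at the points $T_2^j(y)$ with $|j|\le K$. It equals the orientation $\pm1$ of the orbit. So orientation is a continuous invariant function, and minimality makes it constant. This is the argument your sentence ``minimality makes this orientation constant'' needs.
\item The correcting function. Suppose the orientation is $+1$, and write $S^t(y)=T_2^{p_t(y)}(y)$. Let $c(y)$ be the number of $k<0$ with $S^k(y)\in\{T_2^j(y)\}_{j\ge0}$, minus the number of $k\ge0$ with $S^k(y)\in\{T_2^j(y)\}_{j<0}$. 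A direct count gives $c(S^t(y))=c(y)+t-p_t(y)$, so $c-c\circ S=n-1$.
\item Boundedness. Compactness shows $c$ is bounded. Suppose, say, the second count is unbounded along a sequence $y_i$. Shift each $y_i$ to the minimum of its forward $S$-path and pass to a limit point $w$. The entire $S$-orbit of $w$ then lies in $\{T_2^j(w)\}_{j\ge0}$, contradicting the equality of orbits. The first count is handled the same way. Hence $|p_t-t|$ is uniformly bounded, and $c$ is locally constant.
\item Conclusion. Set $\psi(y)=T_2^{c(y)}(y)$. It is a homeomorphism, since it is bijective on each orbit, and $\psi S=T_2\psi$. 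Therefore $\psi h$ conjugates $T_1$ to $T_2$. In the orientation-reversing case, run the same argument with $T_2^{-1}$ in place of $T_2$.
\end{itemize}
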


Next we show that Theorem \ref{flip-conjugacy} applies to the substitution 
subshifts $\si_{\{0,1\}}|\Om(\xiTM)$, $\si_{\{x,y\}}|\Om(\xipd)$ and 
$\si_{\{a,c,d\}}|\Om(\xiGE)$, and, moreover, the topological full groups of 
these subshifts are complete invariants of their dynamics.

\begin{proposition}\label{3-subshifts-Cantor}
The subshifts $\si_{\{0,1\}}|\Om(\xiTM)$, $\si_{\{x,y\}}|\Om(\xipd)$ and 
$\si_{\{a,c,d\}}|\Om(\xiGE)$ are Cantor minimal systems.  Each of these 
subshifts is topologically conjugate to its inverse.
\end{proposition}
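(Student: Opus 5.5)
Minimality is already Proposition \ref{3-subshifts-minimal}. So for each of the three sets $\Om(\xiTM)$, $\Om(\xipd)$, $\Om(\xiGE)$ I will show that it is a Cantor set and that the subshift is topologically conjugate to its inverse.

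\emph{Cantor set.} Each $\Om(\xi)$ is a closed subset of the compact metrizable, totally disconnected space $\cA^{\bZ}$ (Lemma \ref{omega-xi}), so it is compact, metrizable and totally disconnected. It remains to rule out isolated points. In a minimal subshift, one isolated point would make its whole orbit isolated; the orbit is dense, so it would be open, and its complement would be a closed invariant set. By minimality that complement is empty, so $\Om(\xi)$ is a single countable orbit. A compact countable orbit of a minimal homeomorphism is finite, so every point would be periodic. Hence it suffices to show the strings are not periodic. For $\Om(\xipd)$: if $\om\in\Om(\xipd)$ had period $p$, then $\om$ would have at most $p$ distinct subwords of each length, hence so would $\xipd$, since minimality gives the same subwords (Lemma \ref{same-subwords} and Lemma \ref{limit-points}). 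I expect it is cleaner to argue directly. Take $p=2^jq$ with $q$ odd. Any subword of $\xi$ occurs in $\om$, hence at positions differing by every multiple of $p$, so the letter at position $2^{j}\cdot r$ must agree with that at $2^j r+2^{j+1}q$ along long windows. Lemma \ref{xi-pd} gives a contradiction using the positions $n=2^{j}$ and $n+p\cdot 2^{?}$ chosen to change the parity of the $2$-adic valuation; e.g.\ $x_{2^j}$ versus $x_{2^j+2^jq}=x_{2^j(1+q)}$, whose valuation exceeds $j$. Concretely, I would show that $\xipd$ is not eventually periodic, which suffices because a periodic $\om$ has the same finitely many subwords as $\xi$, forcing $\xi$'s long subwords to be periodic with the same period. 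Then $\Om(\xiTM)$ and $\Om(\xiGE)$ are uncountable too, since by Proposition \ref{3-subshifts-factors} they map onto (or are conjugate to) $\Om(\xipd)$, and a finite set cannot map onto an infinite one.

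\emph{Conjugacy to the inverse.} The natural conjugacy is reversal, $R(\ldots x_{-1}x_0.x_1\ldots)=\ldots x_1x_0.x_{-1}\ldots$, i.e.\ $R(\om)(n)=\om(-n)$. It satisfies $R\si R=\si^{-1}$. So I need $R(\Om(\xi))=\Om(\xi)$, which means the language of $\xi$ is closed under reversal. For $\xiGE$ this follows from Lemma \ref{xiGE-examples}: the string $\overleftarrow{\xiGE}c.\xiGE$ lies in $\Om(\xiGE)$ and is a palindrome about position $0$. Every subword of $\xiGE$ is a subword of it, hence its reversal is too, and so lies in the language. For $\xipd$ I transfer this through the $1$-block conjugacy $f_{\phi_2}$, which commutes with $R$. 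For $\xiTM$ I use the fact that the prefix $\TM^{2k}(0)$ is a palindrome, checked by induction from $\TM^{2}(0)=0110$ and $\TM^{2k+2}(0)=\TM^{2k}(0)\TM^{2k}(1)\TM^{2k}(1)\TM^{2k}(0)$ together with complement symmetry. Every subword of $\xiTM$ lies in such a prefix, so its reversal is again a subword.

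The main obstacle is the non-periodicity argument, which needs some care. The reversal invariance is routine.
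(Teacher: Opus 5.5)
\paragraph{The non-periodicity step fails as written.} Reducing the Cantor property to non-periodicity is fine; it is Lemma~\ref{finite-or-Cantor} plus Lemma~\ref{when-finite}, which you can simply cite. The problem is the computation meant to carry that step.

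By Lemma~\ref{xi-pd}, the letter of $\xipd$ at $n=2^km$ depends on the \emph{parity} of $k$. So knowing that $2^j(1+q)$ has $2$-adic valuation larger than $j$ proves nothing: that valuation is $j+v_2(1+q)$, which has the same parity as $j$ whenever $v_2(1+q)$ is even. For $p=3$, for instance, the $1$st and $4$th letters of $\xipd$ are both $x$.

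Your other suggestion, a shift by $2^ip$, fails too. For $i\ge1$ the number $2^j+2^{i+j}q$ has valuation exactly $j$, so the letter does not change.

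The missing idea is to shift by an \emph{odd} multiple $kp$ with $kq\equiv1\pmod 4$, for example $k=q$. Then $1+q^2\equiv2\pmod 4$, so $2^j+qp=2^j(1+q^2)$ has valuation exactly $j+1$ and the letter changes. Since Lemma~\ref{when-finite} only gives eventual periodicity, you need positions going to infinity. Take $2^j+2sp$ (valuation $j$) and $2^j+2sp+qp$ with $s$ even and large (valuation $j+1$). They differ by the multiple $qp$ of the period but carry different letters, which is a contradiction. This is exactly the paper's device $n_p+2sp$ versus $n_p+(4s+1)mp$; the paper applies it to $\xiGE$ rather than $\xipd$.

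If you prefer your route through a periodic point $\om$, note that Lemma~\ref{omega-xi} only puts the subwords of $\om$ among those of $\xi$. For the reverse inclusion you also need that every subword of $\xipd$ recurs. This holds because the prefix of length $2^k-1$ reappears right after every multiple of $2^k$.

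\paragraph{The rest is correct.} With this repaired, your plan goes through.
\begin{itemize}
\item Transferring infiniteness to $\Om(\xiTM)$ and $\Om(\xiGE)$ via Proposition~\ref{3-subshifts-factors} is the paper's argument, started from $\Om(\xipd)$ instead of $\Om(\xiGE)$.
\item Your Thue--Morse palindrome induction matches the paper's.
\item For $\xiGE$ you get reversal-closure of the language from the symmetric point $\overleftarrow{\xiGE}c.\xiGE$ of Lemma~\ref{xiGE-examples}. The paper instead uses the palindromes $\sub^k(a)$ and Lemma~\ref{palindrome}; both work.
\item For $\xipd$, your observation that the $1$-block map $f_{\phi_2}$ commutes with the flip $R$ gives $R(\Om(\xipd))=\Om(\xipd)$ directly. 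It also shows that the paper's abstract conjugacy $f_1^{-1}f_2f_1$ is just $R$, once $f_1$ is the inverse of $f_{\phi_2}$ on $\Om(\xiGE)$ and $f_2=R$.
\end{itemize}
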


Minimality of all three subshifts is already established by Proposition 
\ref{3-subshifts-minimal}, but we need to show that $\Om(\xiTM)$, 
$\Om(\xipd)$ and $\Om(\xiGE)$ are Cantor sets.

\begin{lemma}\label{finite-or-Cantor}
Let $T$ be a homeomorphism of a Cantor set $X$ and let $Y\subset X$ be a 
nonempty closed set such that $T(Y)=Y$.  Suppose that the restriction $T|Y$ 
is a minimal homeomorphism of $Y$.  Then the subset $Y$ is either finite or 
a Cantor set.
\end{lemma}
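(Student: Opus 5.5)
Since $Y$ is a closed subset of the compact metrizable, totally disconnected space $X$, it is itself compact, metrizable and totally disconnected. The subspace topology on $Y$ has a base of clopen sets: intersect clopen subsets of $X$ with $Y$. So $Y$ is a Cantor set exactly when it has no isolated points. It therefore suffices to show that if $Y$ has an isolated point, then $Y$ is finite.

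Suppose $y_0\in Y$ is isolated in $Y$, so that $\{y_0\}$ is open in $Y$. Since $T|Y$ is a homeomorphism of $Y$, every point $T^n(y_0)$, $n\in\bZ$, is also isolated in $Y$. By minimality (Definition \ref{def-minimal}), the two-sided orbit $O=\{T^n(y_0)\mid n\in\bZ\}$ is dense in $Y$.

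Next we show that $O=Y$. Take any $z\in Y$. Because $\{y_0\}$ is open in $Y$, density of the orbit of $z$ (again by minimality) gives some $m$ with $T^m(z)=y_0$. Hence $z=T^{-m}(y_0)\in O$. Thus $Y=O$, and $Y$ consists entirely of isolated points.

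Finally, a compact space in which every point is isolated is finite: the singletons form an open cover, and this cover must have a finite subcover. Hence $Y$ is finite. Otherwise $Y$ has no isolated points, and by the first paragraph it is a Cantor set.

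None of the steps is a serious obstacle. The only point needing care is that the relative topology on $Y$ inherits a clopen base and metrizability from $X$, so that ``no isolated points'' really does characterize $Y$ as a Cantor set.
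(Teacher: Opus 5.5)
Your proof is correct, but it uses minimality in a different form from the paper. The paper considers the set $Y_0$ of non-isolated points of $Y$. It observes that $Y_0$ is closed in $Y$ and that $T(Y_0)=Y_0$, and then applies the closed-invariant-set definition of minimality directly: either $Y_0=\varnothing$, so every point is isolated and compactness gives finiteness, or $Y_0=Y$ and $Y$ is a Cantor set. You instead use the dense-orbit formulation. An isolated point $y_0$ makes $\{y_0\}$ open, so every orbit must pass through $y_0$, and hence $Y$ is the single orbit of $y_0$, which consists of isolated points.

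The paper's route needs only the primary definition and the easy fact that the set of isolated points is open. Yours relies on the equivalence with density of all orbits, which the paper states without proof. In exchange, your argument gives slightly more: a minimal $Y$ with an isolated point is a single orbit, necessarily a periodic one.

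A minor point: your remark that the orbit of $y_0$ is dense is never used. It is the density of the orbit of the arbitrary point $z$ that does the work.
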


\begin{proof}
Since $Y$ is a closed subset of the Cantor set $X$, it is compact, 
metrizable and totally disconnected.  Therefore $Y$ is itself a Cantor set 
if and only if it has no isolated points.  Since $T$ is a homeomorphism of 
$X$ and $T(Y)=Y$, isolated points of $Y$ are mapped to isolated points of 
$Y$ while non-isolated points are mapped to non-isolated points.  It 
follows that $T(Y_0)=Y_0$, where $Y_0$ is the set of all non-isolated 
points of $Y$.  As $Y_0$ is a closed subset of $Y$, minimality of the 
restriction $T|Y$ implies that $Y_0$ is the empty set or $Y$.  In the case 
$Y_0$ is empty, every point of $Y$ is isolated.  Then compactness of $Y$ 
implies that this set is finite.  In the case $Y_0=Y$, there are no 
isolated points in $Y$.  Then $Y$ is a Cantor set.
\end{proof}

\begin{lemma}\label{when-finite}
The set $\Om(\xi)$ is finite if and only if the infinite string $\xi$ is 
eventually periodic.
\end{lemma}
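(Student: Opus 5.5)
The plan is to prove the two implications separately, using Lemma \ref{limit-points} (with some fixed $\zeta$) and the description of $\Om(\xi)$ via subwords.

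\emph{Eventually periodic implies finite.} Suppose $\xi=x_1x_2\ldots$ satisfies $x_{n+p}=x_n$ for all $n\ge n_0$, with $p\ge1$. Let $\om=\ldots y_{-1}y_0.y_1\ldots\in\Om(\xi)$. By Lemma \ref{omega-xi}, every subword of $\om$ occurs infinitely often in $\xi$, so it occurs starting at some position $\ge n_0$. Apply this to the subword $y_n\ldots y_{n+p}$ for each $n$. The periodic tail shows that $y_{n+p}=y_n$, so $\om$ is $p$-periodic. Next apply it to the word $y_1\ldots y_p$. Since this word occurs in the periodic tail, it equals one of the $p$ cyclic shifts of $x_{n_0}\ldots x_{n_0+p-1}$. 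Hence $\Om(\xi)$ has at most $p$ elements, namely the periodic strings $\si^j(\ldots uu.uu\ldots)$, where $u=x_{n_0}\ldots x_{n_0+p-1}$.

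\emph{Finite implies eventually periodic.} Suppose $\Om(\xi)=\{\om_1,\dots,\om_r\}$ is finite. First, each $\om_i$ is periodic: the shift permutes the finite set $\Om(\xi)$, so $\si^{r!}$ fixes every element. Let $p=r!$; then every $\om\in\Om(\xi)$ satisfies $\si^p\om=\om$. Fix $\zeta$ and set $\om^{(k)}=\si^k(\zeta.\xi)$. By Lemma \ref{limit-points}, the limit points of $\om^{(k)}$ are exactly the points of $\Om(\xi)$. By compactness, for every neighborhood $U$ of $\Om(\xi)$ we have $\om^{(k)}\in U$ for all large $k$. Choose $U$ to be the cylinder-based neighborhood consisting of strings whose window at positions $1,\dots,p+1$ agrees with that of some element of $\Om(\xi)$. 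Every such window satisfies $z_{1}=z_{p+1}$, since the elements of $\Om(\xi)$ are $p$-periodic. Hence for all large $k$ the string $\om^{(k)}$ lies in $U$, which gives $x_{k+1}=x_{k+p+1}$. This means $\xi$ is eventually periodic with period $p$.

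The main obstacle is the compactness step in the second implication: passing from ``all limit points lie in $\Om(\xi)$'' to ``the sequence eventually lies in a given open neighborhood of $\Om(\xi)$''. I would argue it by contradiction. If infinitely many $\om^{(k)}$ lay outside the open set $U$, they would have a limit point in the closed set $\{0,1\}^{\bZ}\setminus U$ (with the appropriate alphabet $\cA^{\bZ}$ in general). That point is also a limit point of the whole sequence, so by Lemma \ref{limit-points} it lies in $\Om(\xi)\subset U$, a contradiction. The set $U$ is open because it is a finite union of cylinders.
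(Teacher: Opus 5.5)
Your proof is correct, but in the direction ``finite $\Rightarrow$ eventually periodic'' it takes a different route from the paper's.

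\textbf{The paper's argument.} It fixes one $\om=\ldots uu.uu\ldots\in\Om(\xi)$ and argues combinatorially. Suppose that for every $k$ some word $u^kv$, with $v\ne u$ of length $p$, occurred in $\xi$. Then a limit point $\ldots uuu.v\eta$ would lie in $\Om(\xi)$, and its iterates under $\si^p$ would be pairwise distinct, contradicting finiteness. So some $k$ works, and after an occurrence of $u^k$ the string $\xi$ is forced, by induction, to continue as $u^\infty$.

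\textbf{Your argument.} You get a common period $p=r!$ for all of $\Om(\xi)$ from the fact that $\si$ permutes a finite set. You then use compactness together with Lemma \ref{limit-points} to show that the forward orbit of $\zeta.\xi$ eventually stays in the open set determined by the window at positions $1,\dots,p+1$. This directly gives $x_n=x_{n+p}$ for all large $n$.

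\textbf{Comparison.} Your route is shorter and avoids the auxiliary claim and the induction. Its cost is a period $r!$ that is typically far from minimal. The paper's route gives sharper information in one step: the tail of $\xi$ is exactly $u^\infty$, where $u$ is the period word of an arbitrarily chosen element of $\Om(\xi)$.

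\textbf{Converse direction.} Here the difference is minor. The paper identifies the limit points of $\si^k(\zeta.\xi)$ along residue classes mod $p$. You read off $p$-periodicity of each $\om\in\Om(\xi)$, and the at-most-$p$ count, from the infinitely-many-occurrences part of Lemma \ref{omega-xi}. Both arguments are valid.
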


\begin{proof}
Let $\xi=x_1x_2x_3\ldots$ be an infinite string over an alphabet $\cA$.  
First assume that $\xi$ is eventually periodic.  That is, $\xi=wuuu\ldots$ 
for some words $w,u\in\cA^*$, where $u\ne\varnothing$.  Let $n_0$ be the 
length of $w$ and $p$ be the length of $u$.  Then for any left-infinite 
string $\zeta\in\cA^{-\bN}$ the sequence 
$\si^{n_0}(\zeta.\xi),\si^{n_0+p}(\zeta.\xi),\si^{n_0+2p}(\zeta.\xi),\dots$ 
converges in $\cA^{\bZ}$ to the bi-infinite string $\om=\ldots 
uu.uu\ldots$.  Consequently, for every $i$, $0\le i\le p-1$, we have 
$\si^{n_0+i+kp}(\zeta.\xi)\to\si^i(\om)$ as $k\to\infty$.  Any subsequence 
of the sequence $\zeta.\xi,\si(\zeta.\xi),\si^2(\zeta.\xi),\dots$ shares 
infinitely many terms with at least one of the subsequences
$\{\si^{n_0+i+kp}(\zeta.\xi)\}_{k\ge1}$, $0\le i\le p-1$.  It follows that 
$\om,\si(\om),\dots,\si^{p-1}(\om)$ are the only limit points of that 
sequence.  By Lemma \ref{limit-points}, these are also the only points of 
the set $\Om(\xi)$ so that $\Om(\xi)$ is finite.

Conversely, assume that the set $\Om(\xi)$ is finite.  Take any 
$\om\in\Om(\xi)$.  Since $\si(\Om(\xi))=\Om(\xi)$, it follows that 
$\si^p(\om)=\om$ for some $p\ge1$.  Then $\om=\ldots uu.uu\ldots$, where 
$u$ is a word of length $p$.  We claim that for some $k\ge1$ the string 
$\xi$ has no subwords of the form $u^kv=uu\ldots uv$, where $v$ is a word 
of length $p$ different from $u$.  Assume the contrary.  Then there exists 
a sequence of words $v_1,v_2,v_3,\dots$ of length $p$ different from $u$ 
such that for any $k\ge1$ the word $u^kv_k$ is a subword of $\xi$.  This 
means existence of nonnegative integers $n_1,n_2,n_3,\dots$ such that 
$x_{n_k+1}x_{n_k+2}\ldots x_{n_k+(k+1)p}=u^kv_k$ for all $k\ge1$.  Take 
any left-infinite string $\zeta\in\cA^{-\bN}$.  Since $\cA^{\bZ}$ is 
compact, the sequence $\{\si^{n_k+kp}(\zeta.\xi)\}_{k\ge1}$ has a limit 
point $\om'$.  By construction, $\om'=\ldots uuu.v\eta$ for some word $v$ 
of length $p$ different from $u$ and some $\eta\in\cA^{\bN}$.  Since 
$n_k+kp\to\infty$ as $k\to\infty$, the string $\om'$ is also a limit point 
of the sequence $\zeta.\xi,\si(\zeta.\xi),\si^2(\zeta.\xi),\dots$.  Hence 
$\om'\in\Om(\xi)$ due to Lemma \ref{limit-points}.  Then each of the  
strings $\si^p(\om'),\si^{2p}(\om'),\si^{3p}(\om'),\dots$ belongs to 
$\Om(\xi)$ as well.  It is easy to observe that all these strings are 
distinct, which contradicts the fact that $\Om(\xi)$ is a finite set.

We have established that for some $k\ge1$ words of the form $u^kv$, where 
$v$ is a word of length $p$ different from $u$, do not occur as subwords of 
the string $\xi$.  The word $u^k$ does occur as a subword of $\xi$ since it 
is a subword of $\om$.  Hence $x_{n+1}x_{n+2}\ldots x_{n+kp}=u^k$ for some 
$n\ge0$.  Note that whenever $x_{n+ip+1}x_{n+ip+2}\ldots x_{n+ip+p}=u$ for 
$k$ consecutive values of $i$, the same should hold for the next value of 
$i$.  Now it follows by induction that $x_{n+ip+1}x_{n+ip+2}\ldots 
x_{n+ip+p}=u$ for all $i\ge0$.  Thus the string $\xi$ is eventually 
periodic.
\end{proof}

For any word $w$, let $\overleftarrow{w}$ denote the same word written 
backwards.  The word $w$ is called a \emph{palindrome} if 
$\overleftarrow{w}=w$.

\begin{lemma}\label{palindrome}
Suppose that any subword of a string $\xi\in\cA^{\bN}$ can be extended to a 
palindrome subword of $\xi$.  Then the subshift $\si_{\cA}|\Om(\xi)$ is 
topologically conjugate to its own inverse.
\end{lemma}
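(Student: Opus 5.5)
The conjugacy will be the reversal map $R:\cA^{\bZ}\to\cA^{\bZ}$, which sends $\ldots x_{-1}x_0.x_1x_2\ldots$ to the string $\ldots y_{-1}y_0.y_1y_2\ldots$ with $y_n=x_{-n}$ for all $n\in\bZ$. The plan has four steps:
\begin{itemize}
\item[(1)] Check that $R$ is a homeomorphism of $\cA^{\bZ}$.
\item[(2)] Check that it conjugates $\si_{\cA}$ to $\si_{\cA}^{-1}$.
\item[(3)] Show that $R(\Om(\xi))=\Om(\xi)$.
\item[(4)] Conclude.
\end{itemize}

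Steps (1) and (2) are routine. $R$ is an involution, and it maps each cylinder $\Sigma_{\cA}(S_0,\om_0)$ onto the cylinder $\Sigma_{\cA}(-S_0,\om_0\circ(-\id))$. Hence $R$ is continuous with continuous inverse $R$ itself. A direct index computation gives $R\si_{\cA}=\si_{\cA}^{-1}R$. Indeed, $(\si R\om)(n)=(R\om)(n+1)=\om(-n-1)$ and $(R\si^{-1}\om)(n)=(\si^{-1}\om)(-n)=\om(-n-1)$, so $\si R=R\si^{-1}$, which is equivalent.

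For step (3), the subwords of $R(\om)$ are exactly the reversals $\overleftarrow{w}$ of the subwords $w$ of $\om$. So it suffices to show that the set of subwords of $\xi$ is closed under reversal. This is where the hypothesis enters. Given a subword $w$ of $\xi$, extend it to a palindrome subword $p=uwv$ of $\xi$. Then $p=\overleftarrow{p}=\overleftarrow{v}\,\overleftarrow{w}\,\overleftarrow{u}$, so $\overleftarrow{w}$ is a subword of $p$ and hence of $\xi$. Consequently, if $\om\in\Om(\xi)$, every subword of $R(\om)$ is the reversal of a subword of $\xi$, thus a subword of $\xi$, so $R(\om)\in\Om(\xi)$. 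Since $R$ is an involution, we get $R(\Om(\xi))=\Om(\xi)$.

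For step (4), the restriction of $R$ to $\Om(\xi)$ is a homeomorphism of $\Om(\xi)$ onto itself satisfying $R\,(\si_{\cA}|\Om(\xi))=(\si_{\cA}|\Om(\xi))^{-1}R$. This is precisely a topological conjugacy between the subshift and its inverse in the sense of Definition \ref{def-factor}.

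There is no real obstacle. The only point needing care is to read "extended to a palindrome subword" as meaning that $w$ sits inside a palindrome subword $uwv$ with arbitrary words $u,v$ on either side. Fixing the sign conventions in step (2) is the only other place where an error could slip in.
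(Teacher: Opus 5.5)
Your proof is correct and follows the paper's argument essentially step for step. The paper also uses the reversal involution $R$, checks $R\si_{\cA}=\si_{\cA}^{-1}R$, and reduces the lemma to $R(\Om(\xi))=\Om(\xi)$, which it derives from the palindrome hypothesis making the set of subwords of $\xi$ closed under reversal. The only cosmetic difference is that you get the equality $R(\Om(\xi))=\Om(\xi)$ from one inclusion together with $R^2=\id$, whereas the paper characterizes membership in $R(\Om(\xi))$ directly.
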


\begin{proof}
Consider a map $R:\cA^{\bZ}\to\cA^{\bZ}$ that flips any bi-infinite string 
$\ldots y_{-2}y_{-1}y_0.y_1y_2\ldots$ in $\cA^{\bZ}$ around turning it into 
the string $\ldots y_2y_1y_0.y_{-1}y_{-2}\ldots$.  The map $R$ is an 
involution that maps cylinders onto cylinders.  This implies that $R$ is a 
homeomorphism.  It is easy to see that $R\si_{\cA}=\si_{\cA}^{-1}R$.  It 
follows that for any subshift $\si_{\cA}|Y$, the restriction 
$\si_{\cA}|R(Y)$ is also a subshift.  Moreover, $\si_{\cA}|R(Y)$ is 
topologically conjugate to the inverse of $\si_{\cA}|Y$.  Hence to prove 
the lemma, it is enough to show that $R(\Om(\xi))=\Om(\xi)$.  A string 
$\om\in\cA^{\bZ}$ belongs to $\Om(\xi)$ if and only if every subword of 
$\om$ is also a subword of $\xi$.  By construction, a word $u$ is a subword 
of $R(\om)$ if and only if $\overleftarrow{u}$ is a subword of $\om$.  It 
follows that a string $\eta\in\cA^{\bZ}$ belongs to $R(\Om(\xi))$ if and 
only if for every subword $u$ of $\eta$, the word $\overleftarrow{u}$ is a 
subword of $\xi$.

If a word $u$ is a subword of a word $w$, then the word $\overleftarrow{u}$ 
is a subword of $\overleftarrow{w}$.  In the case $w$ is a palindrome, $u$ 
is a subword of $w$ if and only if $\overleftarrow{u}$ is a subword of 
$w$.  Since any subword of the string $\xi$ can be extended to a palindrome 
subword of $\xi$, any word $u\in\cA^*$ is a subword of $\xi$ if and only if 
$\overleftarrow{u}$ is a subword of $\xi$.  Now it follows from the above 
that $R(\Om(\xi))=\Om(\xi)$.
\end{proof}

\begin{proof}[Proof of Proposition \ref{3-subshifts-Cantor}]
By Proposition \ref{3-subshifts-minimal}, the subshifts 
$\si_{\{0,1\}}|\Om(\xiTM)$, $\si_{\{x,y\}}|\Om(\xipd)$ and 
$\si_{\{a,c,d\}}|\Om(\xiGE)$ are minimal.  Let us prove that $\Om(\xiTM)$, 
$\Om(\xipd)$ and $\Om(\xiGE)$ are Cantor sets.  In view of Lemma 
\ref{finite-or-Cantor}, it is enough to show that all three sets are 
infinite.  By Proposition \ref{3-subshifts-factors}, the subshift 
$\si_{\{x,y\}}|\Om(\xipd)$ is a continuous factor of
$\si_{\{0,1\}}|\Om(\xiTM)$ while the subshift $\si_{\{a,c,d\}}|\Om(\xiGE)$ 
is topologically conjugate to $\si_{\{x,y\}}|\Om(\xipd)$.  This means, in 
particular, that the set $\Om(\xiTM)$ can be mapped onto $\Om(\xipd)$ while 
$\Om(\xipd)$ can be mapped onto $\Om(\xiGE)$.  It follows that the sets 
$\Om(\xiTM)$ and $\Om(\xipd)$ are infinite if $\Om(\xiGE)$ is infinite.

It remains to prove that the set $\Om(\xiGE)$ is infinite.  In view of 
Lemma \ref{when-finite}, it is enough to show that the string $\xiGE$ is 
not eventually periodic.  If an infinite string $\xi=x_1x_2x_3\ldots$ is 
eventually periodic with period $p$, then for any $n\ge1$ the sequence of 
letters $x_n,x_{n+p},x_{n+2p},\dots$ is eventually constant.  We are going 
to show that the latter is not the case when $\xi=\xiGE$.  Any integer 
$p\ge1$ can be represented as $2^km$, where $k\ge0$, $m\ge1$ and $m$ is 
odd.  Let $n_p=2^k$.  Then for any $s\ge1$ the number 
$n_p+2sp=2^k+2^{k+1}sm$ is divisible by $2^k$ but not divisible by 
$2^{k+1}$.  Since $m$ is odd, the number $m^2$ leaves remainder $1$ after 
division by $4$, and so does the number $(4s+1)m^2$ for any $s\ge1$.  
Therefore the number $n_p+(4s+1)mp=2^k+2^k(4s+1)m^2$ is divisible by 
$2^{k+1}$ but not divisible by $2^{k+2}$.  Now it follows from Lemma 
\ref{xi-GE} that $x_{n_p+2sp}=x_{2^k}$ for all $s\ge1$, 
$x_{n_p+(4s+1)mp}=x_{2^{k+1}}$ for all $s\ge1$, and $x_{2^{k+1}}\ne 
x_{2^k}$.  As a consequence, the sequence 
$x_{n_p},x_{n_p+p},x_{n_p+2p},\dots$ is not eventually constant.

We proceed to the second statement of the proposition.  Suppose 
$\mathsf{s}$ is a substitution over an alphabet $\cA$ that substitutes 
every symbol of $\cA$ with a palindrome.  Then for any word $w=x_1x_2\ldots 
x_n\in\cA^*$,
\[
\overleftarrow{\mathsf{s}(w)}=
\overleftarrow{\mathsf{s}(x_1)\mathsf{s}(x_2)\ldots\mathsf{s}(x_n)}
=\overleftarrow{\mathsf{s}(x_n)}\ldots\overleftarrow{\mathsf{s}(x_2)}
\,\overleftarrow{\mathsf{s}(x_1)}= \mathsf{s}(x_n)\ldots\mathsf{s}(x_2) 
\mathsf{s}(x_1)=\mathsf{s}\bigl(\overleftarrow{w}\bigr).
\]
As a consequence, the substitution $\mathsf{s}$ transforms palindromes into 
palindromes.  Note that the words $\sub(a)=aca$, $\sub(c)=d$ and 
$\sub(d)=c$ are palindromes.  The words $\TM^2(0)=0110$ and $\TM^2(1)=1001$ 
are also palindromes.  By the above the substitutions $\sub$ and $\TM^2$ 
transform palindromes into palindromes.  It follows by induction that for 
any $k\ge1$ the words $\sub^k(a)$ and $\TM^{2k}(0)$ are palindromes.  
Recall that each $\sub^k(a)$ is a subword of the string $\xiGE$.  Moreover, 
any subword of $\xiGE$ is also a subword of $\sub^k(a)$ for all 
sufficiently large $k$.  Likewise, each $\TM^{2k}(0)$ is a subword of the 
string $\xiTM$ and any subword of $\xiTM$ is also a subword of 
$\TM^{2k}(0)$ for all sufficiently large $k$.  Now it follows from Lemma 
\ref{palindrome} that each of the subshifts $\si_{\{a,c,d\}}|\Om(\xiGE)$ 
and $\si_{\{0,1\}}|\Om(\xiTM)$ is topologically conjugate to its inverse.

By Proposition \ref{3-subshifts-factors}, the subshift 
$\si_{\{a,c,d\}}|\Om(\xiGE)$ is topologically conjugate to the subshift
$\si_{\{x,y\}}|\Om(\xipd)$.  By the above $\si_{\{a,c,d\}}|\Om(\xiGE)$ is 
topologically conjugate to its inverse.  Hence there exist homeomorphisms 
$f_1:\Om(\xipd)\to\Om(\xiGE)$ and $f_2:\Om(\xiGE)\to\Om(\xiGE)$ such that 
$f_1\si_{\{x,y\}}=\si_{\{a,c,d\}}f_1$ on $\Om(\xipd)$ and 
$f_2\si_{\{a,c,d\}}=\si_{\{a,c,d\}}^{-1}f_2$ on $\Om(\xiGE)$.  Then the map 
$f=f_1^{-1}f_2f_1$ is a homeomorphism of $\Om(\xipd)$ onto itself.  It is 
easy to check that $f\si_{\{x,y\}}=\si_{\{x,y\}}^{-1}f$ on $\Om(\xipd)$.  
Thus the subshift $\si_{\{x,y\}}|\Om(\xipd)$ is topologically conjugate to 
its inverse.
\end{proof}

\begin{definition}\label{def-aperiodic}
A transformation $T:X\to X$ of a set $X$ is called \textbf{aperiodic} if it 
has no periodic points, that is, $T^n(x)\ne x$ for all points $x\in X$ and 
integers $n\ge1$.
\end{definition}

The main result of this section is the following embedding theorem.

\begin{theorem}\label{TFG-embeds}
Suppose that $T_1$ and $T_2$ are aperiodic homeomorphisms of Cantor sets 
and $T_2$ is a continuous factor of $T_1$.  Then the topological full group 
$[[T_2]]$ embeds into $[[T_1]]$.
\end{theorem}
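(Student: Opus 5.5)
Let $f:X_1\to X_2$ be the factor map, so $fT_1=T_2f$ and $f$ is onto. I will build the embedding $\mathcal{E}_f$ described in the introduction. The first step is to observe that, because $T_2$ is aperiodic, every $S_2\in[[T_2]]$ determines its \emph{cocycle} uniquely. If $S_2(y)=T_2^{\nu(y)}(y)=T_2^{\nu'(y)}(y)$, then $T_2^{\nu(y)-\nu'(y)}(y)=y$, which forces $\nu(y)=\nu'(y)$. So there is a unique continuous $\nu_{S_2}:X_2\to\bZ$ with $S_2(y)=T_2^{\nu_{S_2}(y)}(y)$. The same uniqueness holds for $T_1$.

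Next, given $S_2$, define $S_1(x)=T_1^{\nu_{S_2}(f(x))}(x)$. The function $\mu=\nu_{S_2}\circ f$ is continuous and integer-valued, so it is locally constant. Hence $S_1$ is continuous, being piecewise a power of $T_1$ on a finite clopen partition. The key computation is the cocycle identity. For $S_2,S_2'\in[[T_2]]$ and $y\in X_2$,
\[
\nu_{S_2S_2'}(y)=\nu_{S_2'}(y)+\nu_{S_2}\bigl(S_2'(y)\bigr).
\]
This holds because the right-hand side is a valid exponent, and exponents are unique. Then compute, with $m=\nu_{S_2'}(f(x))$:
\[
\mathcal{E}_f(S_2)\mathcal{E}_f(S_2')(x)=T_1^{\nu_{S_2}(f(T_1^{m}x))+m}(x).
\]
Since $f(T_1^m x)=T_2^m f(x)=S_2'(f(x))$, the exponent equals $\nu_{S_2S_2'}(f(x))$. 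Thus $\mathcal{E}_f(S_2S_2')=\mathcal{E}_f(S_2)\mathcal{E}_f(S_2')$, and also $\mathcal{E}_f(\id)=\id$. Consequently $\mathcal{E}_f(S_2)$ has the continuous inverse $\mathcal{E}_f(S_2^{-1})$, so it is a homeomorphism and lies in $[[T_1]]$. This shows $\mathcal{E}_f$ is a group homomorphism $[[T_2]]\to[[T_1]]$.

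Finally, injectivity. Suppose $\mathcal{E}_f(S_2)=\id$. Then $T_1^{\nu_{S_2}(f(x))}(x)=x$ for all $x$, and aperiodicity of $T_1$ gives $\nu_{S_2}(f(x))=0$. Since $f$ is onto, $\nu_{S_2}\equiv0$, so $S_2=\id$.

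The main subtlety is exactly the well-definedness and cocycle step: without aperiodicity of $T_2$ the exponent $\nu$ is not determined by $S_2$, and $\mathcal{E}_f$ would not be well defined. Once uniqueness of exponents is in hand, everything else is bookkeeping.
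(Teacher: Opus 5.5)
Your proof is correct and is essentially the paper's argument without the algebraic scaffolding: your cocycle $\nu_{S_2}$ is $H_{T_2}^{-1}(S_2)$ in Putnam's model, your cocycle identity is precisely $\nu_{S_2S_2'}=\nu_{S_2}\oplus_{T_2}\nu_{S_2'}$ (Lemmas \ref{h-composed} and \ref{model-isomorphism}), and your map $\mathcal{E}_f$ equals $H_{T_1}\Phi_f H_{T_2}^{-1}$, with your invertibility step mirroring Lemma \ref{group-embed}. As a small aside, aperiodicity of $T_1$ already follows from that of $T_2$ via the factor map, and injectivity also follows directly from the identity $f\circ\mathcal{E}_f(S_2)=S_2\circ f$ together with surjectivity of $f$.
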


To prove Theorem \ref{TFG-embeds}, we are going to use an isomorphic model 
of the group $[[T]]$, where $T$ is an aperiodic homeomorphism of a Cantor 
set (onto itself).  The model is due to Putnam \cite{Putnam89} (see the 
construction of a group $G$ right before Theorem 5.2 in \cite{Putnam89}) 
and it actually predates the group $[[T]]$.

\begin{lemma}\label{variable-power}
Suppose $T$ is a homeomorphism of a Cantor set $X$ and $\ell:X\to\bZ$ is a 
continuous function.  Then the map $h_{T,\ell}:X\to X$ defined by 
$h_{T,\ell}(x)=T^{\ell(x)}(x)$, $x\in X$, is continuous.
\end{lemma}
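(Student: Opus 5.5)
Since $\ell$ is continuous with values in the discrete space $\bZ$, it is locally constant. The plan is to use this to reduce continuity of $h_{T,\ell}$ to continuity of the fixed powers $T^n$.

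The first step is to show that $\ell$ takes only finitely many values. The level sets $U_n=\ell^{-1}(n)$, $n\in\bZ$, are open because $\{n\}$ is open in $\bZ$, and they are closed because $\ell^{-1}(\bZ\setminus\{n\})$ is open. So each $U_n$ is clopen, and together they form a disjoint open cover of $X$. Since $X$ is compact, only finitely many $U_n$ are nonempty. Hence $X=U_{n_1}\cup\dots\cup U_{n_k}$, a finite partition of $X$ into clopen sets.

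The second step is to look at $h_{T,\ell}$ on a single piece. On each $U_{n_i}$ the map $h_{T,\ell}$ coincides with $T^{n_i}$. Because $T$ is a homeomorphism, every integer power $T^{n_i}$, including negative powers and $T^0=\id$, is continuous on $X$. Its restriction to $U_{n_i}$ is therefore continuous as well.

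The last step is to glue the pieces together, using either of two equivalent arguments. For an open set $V\subset X$,
\[
h_{T,\ell}^{-1}(V)=\bigcup_{i=1}^k \bigl(U_{n_i}\cap T^{-n_i}(V)\bigr),
\]
and this is a finite union of open sets, so it is open. Alternatively, take $x\in X$ with $x\in U_{n_i}$. Then $U_{n_i}$ is an open neighborhood of $x$ on which $h_{T,\ell}=T^{n_i}$, so $h_{T,\ell}$ is continuous at $x$ by the pasting lemma. This proves the lemma.

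No step here is a real obstacle. The one point to state explicitly is that the level sets are open, which is what lets us restrict to a single power locally. Finiteness of the partition, coming from compactness, is convenient but not strictly needed, since local constancy alone already gives continuity at each point.
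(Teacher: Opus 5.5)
Your proof is correct and follows essentially the same route as the paper: the level sets $\ell^{-1}(n)$ are clopen, $h_{T,\ell}$ agrees with the continuous map $T^n$ on each of them, and the disjoint clopen pieces glue to give continuity. Your extra step showing that only finitely many level sets are nonempty (by compactness) is harmless but not needed, as you note yourself; the paper simply uses the disjoint union over all $n\in\bZ$.
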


\begin{proof}
For any $n\in\bZ$ the map $h_{T,\ell}$ coincides with a continuous map 
$T^n$ on the set $\ell^{-1}(n)$.  Since the function $\ell$ is continuous, 
the set $\ell^{-1}(n)$ is clopen, which implies that $h_{T,\ell}$ is 
continuous on that set as well.  As $X$ is the disjoint union of the sets 
$\ell^{-1}(n)$, $n\in\bZ$, the lemma follows.
\end{proof}

Let $C(X,\bZ)$ denote the set of all continuous functions $\ell:X\to\bZ$.  
Given a homeomorphism $T$ of a Cantor set $X$, for any functions 
$\ell_1,\ell_2\in C(X,\bZ)$ we define a function 
$\ell_1\oplus_T\ell_2:X\to\bZ$ by $(\ell_1\oplus_T\ell_2)(x)= 
\ell_1\bigl(T^{\ell_2(x)}(x)\bigr)+\ell_2(x)$ for all $x\in X$.  Note that 
the formula involves addition and composition of functions.  We never use 
multiplication of functions in what follows.

\begin{lemma}
$\oplus_T$ is a well defined operation on $C(X,\bZ)$.
\end{lemma}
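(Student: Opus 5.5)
We need to show that for $\ell_1,\ell_2\in C(X,\bZ)$ the function $\ell_1\oplus_T\ell_2$ again belongs to $C(X,\bZ)$. It is obviously a function from $X$ to $\bZ$, so only continuity needs checking. The plan is to write it as a sum of two continuous integer-valued functions and use continuity of compositions.

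First, by Lemma \ref{variable-power} applied with $\ell=\ell_2$, the map $h_{T,\ell_2}:X\to X$, $x\mapsto T^{\ell_2(x)}(x)$, is continuous. Then the composition $\ell_1\circ h_{T,\ell_2}$ is continuous as a composition of continuous maps. This composition is exactly the function $x\mapsto\ell_1\bigl(T^{\ell_2(x)}(x)\bigr)$.

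Second, $\ell_1\oplus_T\ell_2=\ell_1\circ h_{T,\ell_2}+\ell_2$ is the pointwise sum of two continuous functions $X\to\bZ$. Since $\bZ$ carries the discrete topology, addition $\bZ\times\bZ\to\bZ$ is continuous. The map $x\mapsto(\ell_1(h_{T,\ell_2}(x)),\ell_2(x))$ into $\bZ\times\bZ$ is continuous, so the sum is continuous.

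To avoid any fuss about product topologies, one can argue via local constancy instead. Each continuous $\ell\in C(X,\bZ)$ is locally constant. Around any point $x$, pick a clopen neighborhood on which both $\ell_2$ and $\ell_1\circ h_{T,\ell_2}$ are constant (the intersection of two clopen neighborhoods). On that neighborhood the sum is constant, hence it is locally constant and therefore continuous.

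There is no real obstacle here. The only substantive input is the continuity of $h_{T,\ell_2}$, which is already supplied by Lemma \ref{variable-power}.
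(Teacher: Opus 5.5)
Your proposal is correct and follows the paper's proof: both write $\ell_1\oplus_T\ell_2=\ell_1 h_{T,\ell_2}+\ell_2$, get continuity of $h_{T,\ell_2}$ from Lemma \ref{variable-power}, and conclude by continuity of compositions and sums. Your extra local-constancy argument is a valid way to justify the last step, though the paper does not bother with it.
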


\begin{proof}
We need to show that for any continuous functions $\ell_1,\ell_2:X\to\bZ$, 
the function $\ell_1\oplus_T\ell_2$ is continuous as well.  Note that 
$\ell_1\oplus_T\ell_2=\ell_1h_{T,\ell_2}+\ell_2$.  The map $h_{T,\ell_2}$ 
is continuous due to Lemma \ref{variable-power}.  Then $\ell_1h_{T,\ell_2}$ 
is continuous as a composition of two continuous functions.  Finally, 
$\ell_1\oplus_T\ell_2$ is continuous as the sum of two continuous functions 
$\ell_1h_{T,\ell_2}$ and $\ell_2$.
\end{proof}

\begin{lemma}\label{h-composed}
$h_{T,\ell_1\oplus_T\ell_2}=h_{T,\ell_1}h_{T,\ell_2}$ for all 
$\ell_1,\ell_2\in C(X,\bZ)$.
\end{lemma}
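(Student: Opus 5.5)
The identity is a pointwise statement, so I will fix an arbitrary $x\in X$ and evaluate both sides at $x$, using only the definitions of $h_{T,\ell}$ and of $\oplus_T$. No continuity is needed beyond the fact, established just above, that $\ell_1\oplus_T\ell_2$ belongs to $C(X,\bZ)$, so that the left-hand side makes sense.

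First I compute the right-hand side. Put $y=h_{T,\ell_2}(x)=T^{\ell_2(x)}(x)$. Then
\[
h_{T,\ell_1}h_{T,\ell_2}(x)=h_{T,\ell_1}(y)=T^{\ell_1(y)}(y)
=T^{\ell_1(y)}\bigl(T^{\ell_2(x)}(x)\bigr)=T^{\ell_1(y)+\ell_2(x)}(x).
\]
The last equality uses only the group law $T^mT^n=T^{m+n}$ for integer powers of the homeomorphism $T$, which holds for any $m,n\in\bZ$, including negative exponents.

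Next I compute the left-hand side. By definition,
\[
(\ell_1\oplus_T\ell_2)(x)=\ell_1\bigl(T^{\ell_2(x)}(x)\bigr)+\ell_2(x)=\ell_1(y)+\ell_2(x),
\]
so $h_{T,\ell_1\oplus_T\ell_2}(x)=T^{\ell_1(y)+\ell_2(x)}(x)$. This coincides with the expression obtained for the right-hand side. Since $x$ was arbitrary, the two maps are equal.

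There is no real obstacle here. The only point needing care is the order of the arguments: $\ell_1$ is evaluated at the point already moved by $h_{T,\ell_2}$, which matches composing $h_{T,\ell_1}$ after $h_{T,\ell_2}$.
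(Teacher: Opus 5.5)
Your proof is correct and is essentially the paper's own argument. Both evaluate the two sides at an arbitrary point $x$, unwind the definitions of $\oplus_T$ and $h_{T,\ell}$, and use $T^mT^n=T^{m+n}$ to see that each side equals $T^{\ell_1(h_{T,\ell_2}(x))+\ell_2(x)}(x)$.
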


\begin{proof}
For any $x\in X$ we have $(\ell_1\oplus_T\ell_2)(x)= 
\ell_1(h_{T,\ell_2}(x))+\ell_2(x)$.  Then
\[
h_{T,\ell_1\oplus_T\ell_2}(x)=T^{\ell_1(h_{T,\ell_2}(x))+\ell_2(x)}(x)
=T^{\ell_1(h_{T,\ell_2}(x))}T^{\ell_2(x)}(x).
\]
On the other hand,
\[
h_{T,\ell_1}(h_{T,\ell_2}(x))=T^{\ell_1(h_{T,\ell_2}(x))}h_{T,\ell_2}(x)
=T^{\ell_1(h_{T,\ell_2}(x))}T^{\ell_2(x)}(x).
\]
Hence $h_{T,\ell_1\oplus_T\ell_2}=h_{T,\ell_1}h_{T,\ell_2}$ everywhere on 
$X$.
\end{proof}

\begin{lemma}
$\oplus_T$ is an associative operation on $C(X,\bZ)$.
\end{lemma}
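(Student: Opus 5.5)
The plan is a direct pointwise computation that unfolds the definition of $\oplus_T$ twice on each side. Take arbitrary $\ell_1,\ell_2,\ell_3\in C(X,\bZ)$ and $x\in X$, and show that $\bigl((\ell_1\oplus_T\ell_2)\oplus_T\ell_3\bigr)(x)=\bigl(\ell_1\oplus_T(\ell_2\oplus_T\ell_3)\bigr)(x)$. Throughout, write the defining formula as $\ell\oplus_T\ell'=\ell h_{T,\ell'}+\ell'$, as in the proof that $\oplus_T$ is well defined.

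For the left-hand side, apply the definition once with outer argument $\ell_3$ to get $(\ell_1\oplus_T\ell_2)(h_{T,\ell_3}(x))+\ell_3(x)$. Expand the inner sum again to obtain
\[
\ell_1\bigl(h_{T,\ell_2}(h_{T,\ell_3}(x))\bigr)+\ell_2(h_{T,\ell_3}(x))+\ell_3(x).
\]
For the right-hand side, the definition gives $\ell_1\bigl(h_{T,\ell_2\oplus_T\ell_3}(x)\bigr)+(\ell_2\oplus_T\ell_3)(x)$. The second summand is $\ell_2(h_{T,\ell_3}(x))+\ell_3(x)$. For the first summand, Lemma \ref{h-composed} gives $h_{T,\ell_2\oplus_T\ell_3}=h_{T,\ell_2}h_{T,\ell_3}$, so it equals $\ell_1\bigl(h_{T,\ell_2}(h_{T,\ell_3}(x))\bigr)$.

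The two expressions then agree term by term, which proves associativity. The only nontrivial ingredient is the use of Lemma \ref{h-composed} to rewrite $h_{T,\ell_2\oplus_T\ell_3}$ as a composition; once that is done there is no real obstacle, only careful bookkeeping of composition versus addition of functions.
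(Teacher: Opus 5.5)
Your proposal is correct and follows essentially the same route as the paper: both write $\ell\oplus_T\ell'=\ell h_{T,\ell'}+\ell'$, use Lemma \ref{h-composed} to rewrite $h_{T,\ell_2\oplus_T\ell_3}$ as $h_{T,\ell_2}h_{T,\ell_3}$, and reduce both sides to $\ell_1h_{T,\ell_2}h_{T,\ell_3}+\ell_2h_{T,\ell_3}+\ell_3$. The only difference is that you evaluate at a point $x$, whereas the paper works at the level of functions.
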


\begin{proof}
For any functions $\ell,\ell'\in C(X,\bZ)$, we have $\ell\oplus_T\ell'=\ell 
h_{T,\ell'}+\ell'$.  Besides, $h_{T,\ell\oplus_T\ell'}= 
h_{T,\ell}h_{T,\ell'}$ due to Lemma \ref{h-composed}.  Given arbitrary 
functions $\ell_1,\ell_2,\ell_3\in C(X,\bZ)$, we obtain
\begin{gather*}
(\ell_1\oplus_T\ell_2)\oplus_T\ell_3=
(\ell_1h_{T,\ell_2}+\ell_2)\oplus_T\ell_3=
(\ell_1h_{T,\ell_2}+\ell_2)h_{T,\ell_3}+\ell_3=
\ell_1h_{T,\ell_2}h_{T,\ell_3}+\ell_2h_{T,\ell_3}+\ell_3, \\
\ell_1\oplus_T(\ell_2\oplus_T\ell_3)=
\ell_1h_{T,\ell_2\oplus_T\ell_3}+(\ell_2\oplus_T\ell_3)=
\ell_1h_{T,\ell_2}h_{T,\ell_3}+\ell_2h_{T,\ell_3}+\ell_3.
\end{gather*}
Thus $(\ell_1\oplus_T\ell_2)\oplus_T\ell_3= 
\ell_1\oplus_T(\ell_2\oplus_T\ell_3)$.
\end{proof}

Let $\0_X$ denote the zero function on $X$.  It is easy to see that $\0_X$ 
is the identity element of the operation $\oplus_T$.  Hence $C(X,\bZ)$ is a 
monoid with respect to $\oplus_T$.  We denote by $C^\times_T(X,\bZ)$ the 
set of all invertible elements of this monoid.  Then $C^\times_T(X,\bZ)$ is 
a group with respect to the operation $\oplus_T$.

Next we define a map $H_T:C(X,\bZ)\to C(X,X)$ by $H_T(\ell)=h_{T,\ell}$ for 
all $\ell\in C(X,\bZ)$.  The map $H_T$ is well defined due to Lemma 
\ref{variable-power}.  In view of Lemma \ref{h-composed}, $H_T$ is a 
homomorphism of the monoid $C(X,\bZ)$ with the operation $\oplus_T$ to the 
monoid $C(X,X)$ with the operation of composition.

\begin{lemma}\label{image-of-group}
$H_T\bigl(C^\times_T(X,\bZ)\bigr)=[[T]]$.
\end{lemma}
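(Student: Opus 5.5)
We prove the two inclusions separately, using that $H_T$ is a monoid homomorphism (Lemma \ref{h-composed}) and that $H_T(\0_X)=\id$.

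\emph{Inclusion $H_T(C^\times_T(X,\bZ))\subset[[T]]$.} Let $\ell\in C^\times_T(X,\bZ)$ and let $\ell'$ be its inverse, so that $\ell\oplus_T\ell'=\ell'\oplus_T\ell=\0_X$. By Lemma \ref{h-composed},
\[
h_{T,\ell}h_{T,\ell'}=h_{T,\ell'}h_{T,\ell}=h_{T,\0_X}=\id.
\]
Hence $h_{T,\ell}$ is a bijection with inverse $h_{T,\ell'}$. Both maps are continuous by Lemma \ref{variable-power}, so $h_{T,\ell}$ is a homeomorphism. It has the form $x\mapsto T^{\ell(x)}(x)$ with $\ell$ continuous, so by Definition \ref{def-TFG} it lies in $[[T]]$.

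\emph{Inclusion $[[T]]\subset H_T(C^\times_T(X,\bZ))$.} Let $F\in[[T]]$, so $F=h_{T,\nu}$ for some continuous $\nu$. The inverse $F^{-1}$ also lies in $[[T]]$, since $[[T]]$ is a group; concretely, $F^{-1}=h_{T,\mu}$ with $\mu(x)=-\nu(F^{-1}(x))$, which is continuous because $F^{-1}$ is. Lemma \ref{h-composed} then gives
\[
h_{T,\nu\oplus_T\mu}=FF^{-1}=\id, \qquad h_{T,\mu\oplus_T\nu}=\id.
\]

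\emph{The main obstacle} is passing from $h_{T,\lambda}=\id$ to $\lambda=\0_X$. The identity $h_{T,\lambda}=\id$ only says $T^{\lambda(x)}(x)=x$ for every $x$, and this does not force $\lambda=\0_X$ when $T$ has periodic points. Two ways around this:

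\begin{itemize}
\item[(a)] \textbf{Aperiodic case.} If $T$ is aperiodic, then $T^{\lambda(x)}(x)=x$ forces $\lambda(x)=0$ for all $x$. So $\nu\oplus_T\mu=\mu\oplus_T\nu=\0_X$, hence $\nu\in C^\times_T(X,\bZ)$ and $F=H_T(\nu)$. This is the setting the paper uses, following Putnam.
\item[(b)] \textbf{General $T$.} Here I would instead choose $\mu$ directly as $\mu(x)=-\nu(F^{-1}(x))$. Then
\[
(\nu\oplus_T\mu)(x)=\nu\bigl(T^{\mu(x)}x\bigr)+\mu(x)
\]
vanishes provided $T^{\mu(x)}x=F^{-1}(x)$. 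We have $F^{-1}(x)=y$ with $T^{\nu(y)}y=x$, so $T^{-\nu(y)}x=y$, which is exactly this condition. The same check gives $\mu\oplus_T\nu=\0_X$: $(\mu\oplus_T\nu)(x)=\mu(F(x))+\nu(x)=-\nu(x)+\nu(x)=0$. Both computations are pointwise identities and need no aperiodicity.
\end{itemize}

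In either case $\nu$ is invertible in the monoid $C(X,\bZ)$ with inverse $\mu$, so $F=H_T(\nu)\in H_T(C^\times_T(X,\bZ))$. Combined with the first inclusion, this proves $H_T(C^\times_T(X,\bZ))=[[T]]$.
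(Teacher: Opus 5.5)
Your proof is correct and, through route (b), is essentially the paper's argument. The paper uses the same inverse $\ell'(x)=-\ell\bigl(h_{T,\ell}^{-1}(x)\bigr)$ and verifies $\ell'\oplus_T\ell=\0_X$ by the same computation. It then obtains $h_{T,\ell'}=h_{T,\ell}^{-1}$ from Lemma \ref{h-composed} rather than by your pointwise check, before computing $\ell\oplus_T\ell'=\0_X$.

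You can drop route (a) and the ``main obstacle'' discussion. The lemma is stated without assuming aperiodicity, and the direct computation never needs to deduce $\lambda=\0_X$ from $h_{T,\lambda}=\id$.
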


\begin{proof}
It follows from Definition \ref{def-TFG} that 
$[[T]]=H_T(C(X,\bZ))\cap\Homeo(X)$, where $\Homeo(X)$ is the group of all 
homeomorphisms of $X$.  Hence it is enough to show that for any function 
$\ell\in C(X,\bZ)$, the map $H_T(\ell)=h_{T,\ell}$ is a homeomorphism if 
and only if $\ell$ is invertible with respect to the operation $\oplus_T$.  
First assume that $\ell$ is invertible, and let $\ell'$ be its inverse.  We 
have $\ell\oplus_T\ell'=\ell'\oplus_T\ell=\0_X$.  Then it follows from 
Lemma \ref{h-composed} that $h_{T,\ell}h_{T,\ell'}=h_{T,\ell'}h_{T,\ell} 
=h_{T,\0_X}=\id_X$.  Hence the map $h_{T,\ell}$ is invertible and its 
inverse $h_{T,\ell'}$ is continuous (due to Lemma \ref{variable-power}).  
Therefore $h_{T,\ell}$ is a homeomorphism.

Conversely, assume that $h_{T,\ell}$ is a homeomorphism of $X$.  Define a 
function $\ell':X\to\bZ$ by $\ell'(x)=-\ell\bigl(h_{T,\ell}^{-1}(x)\bigr)$ 
for all $x\in X$.  Since $h_{T,\ell}$ is a homeomorphism, the function 
$\ell'$ is well defined and continuous.  Hence $\ell'\in C(X,\bZ)$.  By 
construction,
\[
\ell'\oplus_T\ell=\ell'h_{T,\ell}+\ell=
(-\ell)h_{T,\ell}^{-1}h_{T,\ell}+\ell= -\ell+\ell=\0_X.
\]
Then $h_{T,\ell'}h_{T,\ell}=h_{T,\0_X}=\id_X$ due to Lemma 
\ref{h-composed}.  Since the map $h_{T,\ell}$ is invertible, it follows 
that $h_{T,\ell'}=h_{T,\ell}^{-1}$.  Consequently, $\ell\oplus_T\ell'=\ell 
h_{T,\ell'}+\ell'=\ell h_{T,\ell}^{-1}+\ell'=-\ell'+\ell'=\0_X$.  We 
conclude that the function $\ell$ is invertible with respect to the 
operation $\oplus_T$ and $\ell'$ is its inverse.
\end{proof}

\begin{lemma}\label{model-isomorphism}
Suppose that the homeomorphism $T$ is aperiodic.  Then the homomorphism 
$H_T$ is one-to-one.  Moreover, the restriction of $H_T$ to the group 
$C^\times_T(X,\bZ)$ is an isomorphism onto the group $[[T]]$.
\end{lemma}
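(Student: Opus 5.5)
Injectivity of $H_T$ on all of $C(X,\bZ)$ will follow directly from aperiodicity. Suppose $\ell_1,\ell_2\in C(X,\bZ)$ satisfy $H_T(\ell_1)=H_T(\ell_2)$, that is, $T^{\ell_1(x)}(x)=T^{\ell_2(x)}(x)$ for every $x\in X$. Fix $x$ and apply $T^{-\ell_2(x)}$ to both sides. This gives $T^{\ell_1(x)-\ell_2(x)}(x)=x$. If $n=\ell_1(x)-\ell_2(x)$ were nonzero, then $x$ would satisfy $T^{|n|}(x)=x$ with $|n|\ge1$: when $n<0$, apply $T^{-n}$ to both sides of $T^n(x)=x$. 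This contradicts aperiodicity (Definition \ref{def-aperiodic}). Hence $\ell_1(x)=\ell_2(x)$ for all $x$, so $\ell_1=\ell_2$ and $H_T$ is one-to-one.

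For the second statement, recall from the discussion before Lemma \ref{image-of-group} that $H_T$ is a monoid homomorphism from $(C(X,\bZ),\oplus_T)$ to $(C(X,X),\circ)$; this is the content of Lemma \ref{h-composed}. Its restriction to the group $C^\times_T(X,\bZ)$ is therefore a group homomorphism into $\Homeo(X)$. By Lemma \ref{image-of-group}, its image is exactly $[[T]]$. Injectivity is inherited from the first part. Thus the restriction is a bijective homomorphism $C^\times_T(X,\bZ)\to[[T]]$, which is an isomorphism.

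No step here is a genuine obstacle. The one point requiring care is the sign in the aperiodicity argument: the definition of aperiodicity only excludes $T^n(x)=x$ for $n\ge1$, so the case $n<0$ must be reduced to it by applying $T^{-n}$.
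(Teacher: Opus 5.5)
Your proof is correct and follows the paper's argument: aperiodicity together with invertibility of $T$ gives $T^{n_1}(x)\ne T^{n_2}(x)$ whenever $n_1\ne n_2$, which yields injectivity of $H_T$, and Lemma \ref{image-of-group} then shows the restriction maps $C^\times_T(X,\bZ)$ onto $[[T]]$. You also spell out the negative-exponent case explicitly, which the paper covers only with the phrase ``as $T$ is invertible''.
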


\begin{proof}
If $T$ is aperiodic, then $T^n(x)\ne x$ for all points $x\in X$ and 
positive integers $n$.  As $T$ is invertible, it follows that for any $x\in 
X$ and $n_1,n_2\in\bZ$ we have $T^{n_1}(x)\ne T^{n_2}(x)$ whenever $n_1\ne 
n_2$.  In particular, $h_{T,\ell_1}(x)=T^{\ell_1(x)}(x)\ne 
T^{\ell_2(x)}(x)=h_{T,\ell_2}(x)$ whenever $\ell_1(x)\ne\ell_2(x)$.  As a 
consequence, the homomorphism $H_T$ is one-to-one.

By Lemma \ref{image-of-group}, $H_T\bigl(C^\times_T(X,\bZ)\bigr)=[[T]]$.  
Hence the restriction of $H_T$ to the group $C^\times_T(X,\bZ)$ is a 
homomorphism onto the group $[[T]]$.  In the case when the homomorphism 
$H_T$ is one-to-one, the restriction is an isomorphism onto $[[T]]$.
\end{proof}

Any continuous function $f:X_1\to X_2$ induces a map $\Phi_f:C(X_2,\bZ)\to 
C(X_1,\bZ)$ given by $\Phi_f(\ell)=\ell f$ for all $\ell\in C(X_2,\bZ)$.

\begin{lemma}\label{monoid-embed}
Suppose $T_1:X_1\to X_1$ and $T_2:X_2\to X_2$ are homeomorphisms of Cantor 
sets and $f:X_1\to X_2$ is a continuous function such that $fT_1=T_2f$.  
Then the map $\Phi_f$ is a homomorphism of the monoid $C(X_2,\bZ)$ with 
operation $\oplus_{T_2}$ to the monoid $C(X_1,\bZ)$ with operation 
$\oplus_{T_1}$.
\end{lemma}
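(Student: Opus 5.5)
We verify directly that $\Phi_f$ preserves the identity element and the operation. The one identity worth isolating first is
\[
f\,h_{T_1,\ell f}=h_{T_2,\ell}\,f \qquad\text{for every } \ell\in C(X_2,\bZ).
\]
Given this, the homomorphism property reduces to a one-line computation.

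First, $\Phi_f$ is well defined. For $\ell\in C(X_2,\bZ)$ the function $\ell f$ is a composition of continuous maps, so it lies in $C(X_1,\bZ)$. The identity element is also preserved, since $\Phi_f(\0_{X_2})=\0_{X_2}f=\0_{X_1}$.

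Next, we prove the identity. The relation $fT_1=T_2f$ implies $fT_1^n=T_2^nf$ for every $n\ge0$ by induction. Since both maps are homeomorphisms, it also implies $fT_1^{-1}=T_2^{-1}f$: apply $T_2^{-1}$ on the left and $T_1^{-1}$ on the right. Hence $fT_1^n=T_2^nf$ for all $n\in\bZ$. For $x\in X_1$, set $n=\ell(f(x))$. Then
\[
f\bigl(h_{T_1,\ell f}(x)\bigr)=f\bigl(T_1^{n}(x)\bigr)=T_2^{n}\bigl(f(x)\bigr)=h_{T_2,\ell}\bigl(f(x)\bigr).
\]

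Finally, take $\ell_1,\ell_2\in C(X_2,\bZ)$. Using $\ell\oplus_T\ell'=\ell h_{T,\ell'}+\ell'$ together with the identity, we compute
\[
\Phi_f(\ell_1)\oplus_{T_1}\Phi_f(\ell_2)=\ell_1f\,h_{T_1,\ell_2f}+\ell_2f
=\ell_1h_{T_2,\ell_2}f+\ell_2f=(\ell_1\oplus_{T_2}\ell_2)f=\Phi_f(\ell_1\oplus_{T_2}\ell_2).
\]
This proves that $\Phi_f$ is a monoid homomorphism.

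The only step requiring care is extending the intertwining relation to negative powers of $T_1$ and $T_2$, because $\ell$ may take negative values. As shown above, this follows directly from invertibility. Note that the lemma needs neither surjectivity of $f$ nor aperiodicity of $T_1$ or $T_2$.
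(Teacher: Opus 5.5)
Your proof is correct and follows essentially the same route as the paper: you extend $fT_1=T_2f$ to all integer powers (handling negative ones via invertibility) and then compare the two sides pointwise, packaging that comparison through the intertwining identity $f\,h_{T_1,\ell f}=h_{T_2,\ell}\,f$. Your explicit check that $\Phi_f$ sends the zero function to the zero function is a small addition that the paper leaves implicit, although it relies on this fact later in Lemma~\ref{group-embed}.
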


\begin{proof}
First we prove that $fT_1^n=T_2^nf$ for all $n\in\bZ$.  For $n\ge0$, the 
proof is by induction on $n$.  Indeed, the case $n=0$ is trivial, and if 
$fT_1^k=T_2^kf$ for some $k\ge0$ then
\[
fT_1^{k+1}=(fT_1^k)T_1=(T_2^kf)T_1=T_2^k(fT_1)=T_2^k(T_2f)=T_2^{k+1}f.
\]
In the case $n<0$, we have $-n>0$ so that $fT_1^{-n}=T_2^{-n}f$ by the 
above.  Then
\[
fT_1^n=(T_2^nT_2^{-n})fT_1^n=T_2^n(T_2^{-n}f)T_1^n=T_2^n(fT_1^{-n})T_1^n
=T_2^nf(T_1^{-n}T_1^n)=T_2^nf.
\]
Given functions $\ell_1,\ell_2\in C(X_2,\bZ)$, we need to show that 
$\Phi_f(\ell_1\oplus_{T_2}\ell_2)=\Phi_f(\ell_1)\oplus_{T_1} 
\Phi_f(\ell_2)$.  For any $x\in X_1$ we obtain
\begin{gather*}
\bigl(\Phi_f(\ell_1\oplus_{T_2}\ell_2)\bigr)(x)=
(\ell_1\oplus_{T_2}\ell_2)(f(x))=
\ell_1\bigl(T_2^{\ell_2(f(x))}(f(x))\bigr)+\ell_2(f(x)), \\
\bigl(\Phi_f(\ell_1)\oplus_{T_1}\Phi_f(\ell_2)\bigr)(x)=
(\ell_1f\oplus_{T_1}\ell_2f)(x)=
\ell_1f\bigl(T_1^{\ell_2f(x)}(x)\bigr)+\ell_2f(x).
\end{gather*}
Since $\ell_2(f(x))=\ell_2f(x)$ is an integer, the map $fT_1^{\ell_2f(x)}$ 
coincides with the map $T_2^{\ell_2f(x)}f$.  It follows that 
$\bigl(\Phi_f(\ell_1\oplus_{T_2}\ell_2)\bigr)(x)= 
\bigl(\Phi_f(\ell_1)\oplus_{T_1}\Phi_f(\ell_2)\bigr)(x)$.
\end{proof}

\begin{lemma}\label{group-embed}
Under the assumptions of Lemma \ref{monoid-embed}, the restriction of 
$\Phi_f$ to the group $C^\times_{T_2}(X_2,\bZ)$ is a homomorphism to the 
group $C^\times_{T_1}(X_1,\bZ)$.  This group homomorphism is one-to-one 
provided that the function $f$ is onto.
\end{lemma}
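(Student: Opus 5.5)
By Lemma \ref{monoid-embed}, $\Phi_f$ is a monoid homomorphism from $\bigl(C(X_2,\bZ),\oplus_{T_2}\bigr)$ to $\bigl(C(X_1,\bZ),\oplus_{T_1}\bigr)$. The plan is to deduce both claims from this together with elementary algebra and the surjectivity of $f$.

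First I will check that $\Phi_f$ preserves identity elements. We have $\Phi_f(\0_{X_2})=\0_{X_2}f=\0_{X_1}$, so $\Phi_f$ is a homomorphism of monoids and not merely of semigroups. Any monoid homomorphism sends invertible elements to invertible elements. Indeed, if $\ell\oplus_{T_2}\ell'=\ell'\oplus_{T_2}\ell=\0_{X_2}$, applying $\Phi_f$ and using Lemma \ref{monoid-embed} gives
\[
\Phi_f(\ell)\oplus_{T_1}\Phi_f(\ell')=\Phi_f(\ell')\oplus_{T_1}\Phi_f(\ell)=\0_{X_1}.
\]
Hence $\Phi_f$ maps $C^\times_{T_2}(X_2,\bZ)$ into $C^\times_{T_1}(X_1,\bZ)$. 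The restriction is multiplicative by Lemma \ref{monoid-embed}, so it is a group homomorphism.

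For injectivity, suppose $f$ is onto. I will show that $\Phi_f$ is injective on all of $C(X_2,\bZ)$, which immediately gives injectivity of the restriction. Let $\ell_1f=\ell_2f$. For every $y\in X_2$, pick $x\in X_1$ with $f(x)=y$. Then $\ell_1(y)=\ell_1(f(x))=\ell_2(f(x))=\ell_2(y)$, so $\ell_1=\ell_2$. Equivalently, the kernel of the group homomorphism is trivial: $\ell f=\0_{X_1}$ forces $\ell=\0_{X_2}$.

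I expect no step to be a serious obstacle. The only point needing care is that the identity of the monoid $\bigl(C(X,\bZ),\oplus_T\bigr)$ is the zero function, so that preserving inverses reduces to $\0_{X_2}f=\0_{X_1}$. Notably, aperiodicity is not needed for this lemma. It enters only afterwards, through Lemma \ref{model-isomorphism}, to transport the embedding to the topological full groups.
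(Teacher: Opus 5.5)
Your proof is correct and follows the paper's argument essentially verbatim: apply $\Phi_f$ to the relations $\ell\oplus_{T_2}\ell'=\ell'\oplus_{T_2}\ell=\0_{X_2}$ using Lemma \ref{monoid-embed} and $\Phi_f(\0_{X_2})=\0_{X_1}$, then deduce injectivity from the fact that $\ell_1f=\ell_2f$ forces $\ell_1=\ell_2$ when $f$ is onto. Your explicit check that the zero function is preserved, which the paper uses without comment, and your remark that aperiodicity plays no role in this lemma are both accurate.
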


\begin{proof}
By Lemma \ref{monoid-embed}, the map $\Phi_f$ is a homomorphism of the 
monoid $C(X_2,\bZ)$ with operation $\oplus_{T_2}$ to the monoid 
$C(X_1,\bZ)$ with operation $\oplus_{T_1}$.  Take any function $\ell\in 
C(X_2,\bZ)$ invertible with respect to $\oplus_{T_2}$ and let $\ell'$ be 
its inverse.  Then $\ell\oplus_{T_2}\ell'=\ell'\oplus_{T_2}\ell 
=\0_{X_2}$, which implies that $\Phi_f(\ell)\oplus_{T_1}\Phi_f(\ell')= 
\Phi_f(\ell')\oplus_{T_1}\Phi_f(\ell)=\Phi_f(\0_{X_2})=\0_{X_1}$.  Hence 
the function $\Phi_f(\ell)$ is invertible with respect to $\oplus_{T_1}$ 
and $\Phi_f(\ell')$ is its inverse.  We obtain that $\Phi_f$ maps 
invertible elements of the monoid $C(X_2,\bZ)$ to invertible elements of 
the monoid $C(X_1,\bZ)$.  Consequently, the restriction of $\Phi_f$ to the 
group $C^\times_{T_2}(X_2,\bZ)$ is a homomorphism to the group 
$C^\times_{T_1}(X_1,\bZ)$.

If the map $f$ is onto then for any functions $\ell_1$ and $\ell_2$ on 
$X_2$ we have $\ell_1f=\ell_2f$ if and only if $\ell_1=\ell_2$.  It follows 
that the map $\Phi_f$ is one-to-one, and so is its restriction to 
$C^\times_{T_2}(X_2,\bZ)$.
\end{proof}

Now we are ready to prove Theorem \ref{TFG-embeds}.

\begin{proof}[Proof of Theorem \ref{TFG-embeds}]
Suppose $T_1:X_1\to X_1$ and $T_2:X_2\to X_2$ are aperiodic homeomorphisms 
of Cantor sets.  If $T_2$ is a continuous factor of $T_1$, there exists a 
continuous map $f:X_1\to X_2$ such that $f$ is onto and $fT_1=T_2f$.  Lemma 
\ref{group-embed} implies that the group $C^\times_{T_2}(X_2,\bZ)$ with 
operation $\oplus_{T_2}$ embeds into the group $C^\times_{T_1}(X_1,\bZ)$ 
with operation $\oplus_{T_1}$.  Since the homeomorphisms $T_1$ and $T_2$ 
are aperiodic, it follows from Lemma \ref{model-isomorphism} that the group 
$C^\times_{T_1}(X_1,\bZ)$ is isomorphic to the topological full group 
$[[T_1]]$ while the group $C^\times_{T_2}(X_2,\bZ)$ is isomorphic to 
$[[T_2]]$.  We conclude that the group $[[T_2]]$ embeds into $[[T_1]]$.
\end{proof}

Cantor minimal systems are obviously aperiodic.  In view of Proposition 
\ref{3-subshifts-factors}, Theorem \ref{TFG-embeds} implies that the 
topological full groups of the subshifts $\si|\Om(\xipd)$ and 
$\si|\Om(\xiGE)$ embed into the topological full group of the subshift 
$\si|\Om(\xiTM)$.  In particular, every subgroup of $[[\si|\Omega(\xipd)]]$ 
or $[[\si|\Omega(\xiGE)]]$ embeds into $[[\si|\Omega(\xiTM)]]$.  In the 
remainder of this section we are going to describe a subgroup of the group 
$[[\si|\Omega(\xiGE)]]$ that will be later proved to have intermediate 
growth.

We begin with a general construction of involutions in a topological full 
group.

\begin{definition}\label{def-2-cycle}
Let $U$ be a clopen subset of a Cantor set $X$.  For any homeomorphism 
$T:X\to X$ such that the image $T(U)$ is disjoint from $U$, we define a 
\textbf{generalized 2-cycle} $\de_{U;T}:X\to X$ by
\[
\de_{U;T}(x)=
\begin{cases}
T(x) & \text{if $x\in U$,}\\
T^{-1}(x) & \text{if $x\in T(U)$,}\\
x & \text{otherwise.}
\end{cases}
\]
\end{definition}

The generalized $2$-cycle $\de_{U;T}$ maps disjoint clopen sets $U$ and 
$T(U)$ onto each other while fixing the other points of $X$.  By 
construction, $\de_{U;T}$ is continuous and $\de_{U;T}^{-1}=\de_{U;T}$.  
Therefore, whenever $\de_{U;T}$ is defined, it belongs to the topological 
full group $[[T]]$.

Given an alphabet $\cA$, for any letter $z\in\cA$ let $[.z]_{\cA}$ denote 
the set of all bi-infinite strings $\ldots x_{-1}x_0.x_1x_2\ldots$ in 
$\cA^{\bZ}$ such that $x_1=z$, and let $[z.]_{\cA}$ denote the set of all 
strings such that $x_0=z$.  The sets $[.z]_{\cA}$ and $[z.]_{\cA}$ are 
cylinders in $\cA^{\bZ}$.  Clearly, the shift $\si_{\cA}$ maps $[.z]_{\cA}$ 
onto $[z.]_{\cA}$.  Let $\si_{\cA}|Y$ be a subshift such that $Y$ is a 
Cantor set.  Suppose that the letter $z$ is never doubled in elements of 
$Y$, that is, the word $zz$ is a subword of no string in $Y$.  Then the 
intersection $[.z]_{\cA}\cap [z.]_{\cA}\cap Y$ is empty.  Since 
$\si_{\cA}(Y)=Y$, it follows that the set $[.z]_{\cA}\cap Y$, which is a 
clopen subset of the Cantor set $Y$, is disjoint from 
$\si_{\cA}([.z]_{\cA}\cap Y)=[z.]_{\cA}\cap Y$.  Hence the generalized 
$2$-cycle $\de_{U;T}$, where $U=[.z]_{\cA}\cap Y$ and $T=\si_{\cA}|Y$, is 
defined.  We will use the short notation $\de_z$ for this involution.  

The generalized $2$-cycle $\de_z$ is uniquely determined when its domain 
$Y$ is specified.  It belongs to the topological full group of the subshift 
$\si|Y$.  The action of $\de_z$ on a bi-infinite string $\ldots 
x_{-1}x_0.x_1x_2\ldots$ can be described as follows.  If one of two letters 
$x_0$ and $x_1$ adjacent to the dot is $z$, then the dot is carried across 
that letter.  Otherwise the string is not changed.

It follows from Lemma \ref{xi-GE} that for any two consecutive letters in 
the infinite string $\xiGE$, one is $a$ while the other one is $c$ or $d$.  
Hence the string $\xiGE$ contains no double letters, that is, the words 
$aa$, $cc$ and $dd$ are not subwords of $\xiGE$.  Consequently, the same is 
true for any bi-infinite string in $\Om(\xiGE)$.  Therefore each of the 
generalized $2$-cycles $\de_a$, $\de_c$ and $\de_d$ is defined on 
$\Om(\xiGE)$.

\begin{theorem}\label{sub-main}
The subgroup of the topological full group $[[\si|\Omega(\xiGE)]]$ 
generated by involutions $\de_a$, $\de_c$ and $\de_d$ has intermediate 
growth.
\end{theorem}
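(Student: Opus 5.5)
The plan is to identify the group $G=\langle\de_a,\de_c,\de_d\rangle$ with the Grigorchuk group $\cG_{(01)^\infty}$, whose growth is intermediate by \cite{Grig84} because the sequence $0101\ldots$ is not eventually constant. The identification will go through Schreier graphs. Both groups act on sets whose orbit Schreier graphs are linear chains, labeled by the same substitution-generated sequences. Recall that $\cG_{(01)^\infty}$ is generated by $a,b,c,d$ with $b$ redundant. In the sequence $\om=(01)^\infty$, letter $0$ means the column $(P,P,I)$ for $(b,c,d)$ and letter $1$ means $(P,I,P)$. The group acts on the boundary $\{0,1\}^{\bN}$ of the binary tree.

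First I describe the orbital Schreier graphs of $G$ on $\Om(\xiGE)$. For a point $\ldots x_0.x_1\ldots$, the generator $\de_z$ moves the dot across an adjacent letter equal to $z$. The orbit of a point is therefore the set of all dot positions, and the Schreier graph is a bi-infinite line. The edge between positions $n$ and $n+1$ is labeled by the letter $x_{n+1}$, and each vertex carries a loop for each generator whose letter is not adjacent to the dot. So the orbital graphs are exactly the strings of $\Om(\xiGE)$, read as labeled lines.

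Second I compute the Schreier graphs of $\cG_{(01)^\infty}$ acting on the levels of the tree and on the boundary. The standard fact, proved by induction using the recursive definition of $a,c,d$, is the following. The level-$n$ Schreier graph is a path on $2^n$ vertices. Its $2^{n-1}$ odd edges are labeled $a$. The even edges are labeled by pairs of generators, determined by the $2$-adic valuation of the position together with the letters of $\om$. After deleting the redundant generator $b$, the label at position $2^km$ is $c$ when $\om_k$ corresponds to $c$ acting nontrivially, and $d$ otherwise. For $\om=(01)^\infty$ this alternates, giving $c$ for odd $k$ and $d$ for even $k$. By Lemma~\ref{xi-GE} this is exactly the prefix of $\xiGE$. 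The boundary orbit graphs are then limits of these finite paths, and they coincide as labeled lines with the orbit graphs of points of $\Om(\xiGE)$. Lemma~\ref{xiGE-examples} supplies the special two-ended orbits, such as the orbit of $1^\infty$.

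The main step, and the real obstacle, is a general criterion. Suppose two groups are generated by involutions indexed by the same set and act on spaces whose orbital Schreier graphs form the same family of labeled graphs. Then $w\mapsto w$ extends to an isomorphism of the groups. The reason is that a word acts trivially precisely when it fixes every vertex of every orbit graph. That condition is read off from the labeled graphs alone, so the two groups have the same relations.

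Two subtleties need care here, and I would prove them via density arguments. On the $\Om(\xiGE)$ side the point is that the set of orbit graphs, i.e.\ the set of strings of $\Om(\xiGE)$, is determined by the set of finite subwords. These subwords are exactly the finite labeled segments that appear in the level Schreier graphs of $\cG_{(01)^\infty}$. On the Grigorchuk side, acting faithfully on the boundary is equivalent to acting faithfully on all levels. So a word is trivial in $\cG_{(01)^\infty}$ if and only if it fixes every vertex of all finite path graphs. By minimality (Proposition~\ref{3-subshifts-minimal}) and Lemma~\ref{subwords-in-subwords}, this holds if and only if it fixes every point of $\Om(\xiGE)$.

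Boundary effects at the ends of the finite paths need handling. A segment long enough around a given vertex appears in the interior of a longer level graph. Conversely, a vertex near an end of a path, where loops stand in for missing edges, is matched by a two-ended string such as $\overleftarrow{\xiGE}c.\xiGE$. Once the isomorphism $G\cong\cG_{(01)^\infty}$ is established, intermediate growth follows from \cite{Grig84}.
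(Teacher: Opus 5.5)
Your argument is correct in substance, but it takes a different route from the paper. The paper compares two single faithful orbits. One is the orbit of $1^\infty$ under $\cG_{(01)^\infty}$, identified with the half-line $\Gamma_{\{a,c,d\}}(\xiGE)$ via Lemmas \ref{half-line-covers} and \ref{Schr-half-line}. The other is the orbit of $\overleftarrow{\xiGE}d.\xiGE$ under $G$, which is a line. The inclusion $\cK(\text{line})\subset\cK(\text{half-line})$ comes from the folding covering. The reverse inclusion comes from the fact that a kernel element of the half-line moves only finitely many vertices of the line, combined with density of that orbit in the Cantor set $\Om(\xiGE)$ (Lemma \ref{GE-line-halfline}).

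You instead compare the kernels of all level paths $\Gamma_{\{a,c,d\}}(\sub^{n-1}(a))$, which needs only Lemma \ref{Schr-level}, with the kernels of all lines $\Gamma_{\{a,c,d\}}(\om)$, $\om\in\Om(\xiGE)$. Your tool is locality: a word of length $L$ acting at a vertex sees only the $2L$ edge labels around it. Every such window in a string of $\Om(\xiGE)$ is a subword of $\xiGE$, hence an interior window of some level path. So a word trivial on all levels is trivial on $\Om(\xiGE)$. This replaces the paper's ``finitely many moved points plus Cantor density'' step with a purely combinatorial one and bypasses Lemma \ref{half-line-covers}. In fact the same window argument shows directly that $\cK(\Gamma_{\{a,c,d\}}(\xiGE))\subset\cK(\Gamma_{\{a,c,d\}}(\om))$ for every $\om\in\Om(\xiGE)$. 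The paper's version, in exchange, realizes the isomorphism through two explicit faithful orbits.

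Some points in your write-up need correcting.

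\emph{The orbit of $1^\infty$.} It is not two-ended and is not an orbit graph of $G$. It is the half-line, the quotient of the lines $\overleftarrow{\xiGE}x.\xiGE$ of Lemma \ref{xiGE-examples} under the fold $i\mapsto i$ for $i\ge0$ and $i\mapsto -1-i$ for $i<0$. So your criterion of ``the same family of orbit graphs'' does not apply literally to the boundary action; only your level-graph version does.

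\emph{The ends of the level paths.} At the left end of a level path, the local picture (loops $c$ and $d$ at vertex $0$) occurs at no vertex of any line in $\Om(\xiGE)$. So ``matched'' must mean the fold above. It is a covering, so a word fixing every vertex of $\Gamma_{\{a,c,d\}}(\overleftarrow{\xiGE}x.\xiGE)$ fixes every vertex of $\Gamma_{\{a,c,d\}}(\xiGE)$, as in Lemmas \ref{kernel-cover} and \ref{Sch-line-halfline}. The half-line agrees with the level path near $0$. The right end is handled by the palindromic symmetry of $\sub^{n-1}(a)$. Levels shorter than $2L$ are handled because they are covered by longer ones.

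\emph{Periodicity, not minimality.} Passing from ``the image of the word in $G$ is the identity'' to ``the word fixes every vertex of $\Gamma_{\{a,c,d\}}(\om)$'' uses that no string in $\Om(\xiGE)$ is periodic. Minimality is not what drives your equivalence; locality is. The interior windows of level paths already lie in $\overleftarrow{\xiGE}c.\xiGE\in\Om(\xiGE)$.
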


To prove Theorem \ref{sub-main}, we are going to identify a known group of 
intermediate growth that is isomorphic to the subgroup in the theorem.  
This will be done in the next section.

\section{Groups of intermediate growth}\label{sect-growth}

Let $G$ be an infinite, finitely generated group.  Suppose $A$ is a finite 
generating set for $G$.  Let $A_\pm$ denote the subset of $G$ consisting of 
all generators in $A$ and their inverses.  Any word over the alphabet 
$A_\pm$ can be interpreted as a product in the group $G$.  This gives rise 
to a map $h_{G,A}:A_\pm^*\to G$, which is a homomorphism of the monoid 
$A_\pm^*$ onto $G$.  For any element $g\in G$, the length of the shortest 
word in the pre-image $h_{G,A}^{-1}(g)$ is called the \emph{length} of $g$ 
(relative to the generating set $A$).  For any integer $n\ge1$ let 
$\ga_{G,A}(n)$ be the number of all elements of the group $G$ of length at 
most $n$.  The function $\ga_{G,A}$ is called the \emph{growth function} of 
$G$.  The asymptotic behavior of $\ga_{G,A}(n)$ as $n\to\infty$ is referred 
to as the \emph{growth} of the group $G$.  Clearly, $\ga_{G,A}(n)$ does not 
exceed the number of words of length at most $n$ in $A_\pm^*$, which is 
$1+m+m^2+\dots+m^n=(m^{n+1}-1)/(m-1)$, where $m$ is the number of elements 
in $A_\pm$.  Hence the growth of $G$ is at most exponential.

\begin{definition}\label{def-growth}
Let $G$ be an infinite, finitely generated group with a finite generating 
set $A$.  We say that the group $G$ has \textbf{exponential growth} if 
$\ga_{G,A}(n)\ge c^n$ for some $c>0$ and all sufficiently large $n$.  We 
say that the group $G$ has \textbf{polynomial growth} if $\ga_{G,A}(n)\le 
n^\alpha$ for some $\alpha>0$ and all sufficiently large $n$.  We say that 
the group $G$ has \textbf{intermediate growth} if the growth of $G$ is 
slower than exponential but faster than polynomial, namely, for any 
$c,\alpha>0$ we have $n^\alpha<\ga_{G,A}(n)<c^n$ for all sufficiently large 
$n$.
\end{definition}

Definition \ref{def-growth} is not sensitive to the choice of a generating 
set for the group $G$.  Suppose $B$ is a different finite generating set 
for $G$.  Let $k_1$ be the maximal length of elements of $B$ relative to 
the generating set $A$.  Then any product of $n$ elements of $B$ or their 
inverses can be written as a product of at most $k_1n$ elements of $A$ or 
their inverses.  Therefore $\ga_{G,B}(n)\le\ga_{G,A}(k_1n)$ for all 
$n\ge1$.  Likewise, $\ga_{G,A}(n)\le\ga_{G,B}(k_2n)$ for all $n\ge1$, where 
$k_2$ is the maximal length of elements of $A$ relative to the generating 
set $B$.  It follows that whenever the growth function $\ga_{G,A}$ is of 
exponential, polynomial or intermediate growth, the growth function 
$\ga_{G,B}$ exhibits the same kind of growth.

Note that if an infinite, finitely generated group is not of exponential or 
polynomial growth, it need not have intermediate growth.  Indeed, there are 
groups with growth oscillating between exponential and sub-exponential.

Now we are going to describe a family of groups of intermediate growth 
constructed by the first author in \cite{Grig84}.  Each group $\cG_\om$ of 
the family is labeled by an infinite ternary string
$\om\in\{0,1,2\}^{\bN}$.  The group $\cG_\om$ is generated by a set $A_\om$ 
consisting of four transformations $a$, $b_\om$, $c_\om$ and $d_\om$.  In 
the original construction in \cite{Grig84}, the generators were acting on 
the interval $[0,1]$.  We consider a modification in which the generators 
act on the set $\{0,1\}^{\bN}$ of infinite binary strings.  This 
modification agrees with the original construction when we identify any 
binary string $x_1x_2x_3\ldots$ with the number 
$2^{-1}x_1+2^{-2}x_2+2^{-3}x_3+\dots$.

The generator $a$ is common for all groups in the family.  It is defined by 
$a(0\xi)=1\xi$ and $a(1\xi)=0\xi$ for all $\xi\in\{0,1\}^{\bN}$.  That is, 
$a$ changes the first symbol in every binary string.  The generators 
$b_\om$, $c_\om$ and $d_\om$ are defined using the following recursive 
rules: for any $\xi\in\{0,1\}^{\bN}$, $l\in\{0,1,2\}$ and 
$\om\in\{0,1,2\}^{\bN}$ we have
\begin{eqnarray*}
& b_{2\om}(0\xi)=c_{1\om}(0\xi)=d_{0\om}(0\xi)=0\xi,\\
& b_{0\om}(0\xi)=b_{1\om}(0\xi)=c_{0\om}(0\xi)=c_{2\om}(0\xi)
=d_{1\om}(0\xi)=d_{2\om}(0\xi)=0a(\xi),\\
& b_{l\om}(1\xi)=1b_\om(\xi), \qquad c_{l\om}(1\xi)=1c_\om(\xi), \qquad
d_{l\om}(1\xi)=1d_\om(\xi).
\end{eqnarray*}
The string $1^\infty=111\ldots$ is the only element of $\{0,1\}^{\bN}$ 
fixed by $b_\om$, $c_\om$ and $d_\om$ for all $\om\in\{0,1,2\}^{\bN}$.  Any 
other string $\xi\in\{0,1\}^{\bN}$ is uniquely represented as 
$\xi=1^k0x\eta$, where $x\in\{0,1\}$, $\eta\in\{0,1\}^{\bN}$, $k\ge0$, and 
$1^k$ denotes the symbol $1$ repeated $k$ times ($1^0=\varnothing$).  Then 
two of the transformations $b_\om$, $c_\om$ and $d_\om$ change only the 
symbol $x$ in the string $\xi$ while the third one does not change $\xi$ at 
all.  Which one of the three fixes $\xi$ is determined by the $(k+1)$-th 
symbol of the ternary string $\om$.  Namely, $\xi$ is fixed by $b_\om$, 
$c_\om$ or $d_\om$ if the $(k+1)$-th symbol of $\om$ is respectively $2$, 
$1$ or $0$.

It follows from the construction that $a$, $b_\om$, $c_\om$ and $d_\om$ are 
involutions.  Besides, $b_\om$, $c_\om$ and $d_\om$ commute with one 
another, and $b_\om c_\om=d_\om$, $b_\om d_\om=c_\om$ and $c_\om 
d_\om=b_\om$.  Therefore one of the transformations $b_\om$, $c_\om$ and 
$d_\om$ can be dropped from the generating set $A_\om$ as redundant.

It is easy to see that for any $\xi\in\{0,1\}^{\bN}$ and any $n\ge1$ the 
first $n$ symbols of strings $a(\xi$), $b_\om(\xi)$, $c_\om(\xi)$ and 
$d_\om(\xi)$ are uniquely determined by the first $n$ symbols of the string 
$\xi$ and the first $n-1$ symbols of the string $\om$.  As a consequence, 
for any $n\ge1$ the natural action of the group $\cG_\om$ on 
$\{0,1\}^{\bN}$ induces an action on binary words of length $n$ regarded as 
prefixes of infinite binary strings.  Thus we obtain a length-preserving 
action of $\cG_\om$ on the set $\{0,1\}^*$ of all binary words.  Note that 
the action of the generators $a$, $b_\om$, $c_\om$ and $d_\om$ can be given 
by the same recursive formulas as their action on $\{0,1\}^{\bN}$, only 
this time $\xi$ is an arbitrary word in $\{0,1\}^*$ and we also need to let 
$a(\varnothing)=b_\om(\varnothing)= c_\om(\varnothing)=d_\om(\varnothing) 
=\varnothing$ for all $\om\in\{0,1,2\}^{\bN}$.

\begin{lemma}\label{transitive-on-levels}
The group $\cG_\om$ acts transitively on binary words of any given length.
\end{lemma}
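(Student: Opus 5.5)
We prove the lemma by induction on the length $n$, simultaneously for all $\om\in\{0,1,2\}^{\bN}$. The cases $n=0$ and $n=1$ are immediate: there is only one empty word, and the generator $a$ swaps the words $0$ and $1$. For the inductive step we assume that for every $\om'$ the group $\cG_{\om'}$ acts transitively on words of length $n$, and we show that $\cG_\om$ acts transitively on words of length $n+1$. We write $\om=l\om'$ with $l\in\{0,1,2\}$.

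The key observation is that the stabilizer of the first letter induces the action of $\cG_{\om'}$ on the tails of words beginning with $1$. By the recursive rules, $b_\om(1\xi)=1b_{\om'}(\xi)$, $c_\om(1\xi)=1c_{\om'}(\xi)$ and $d_\om(1\xi)=1d_{\om'}(\xi)$. We also need $a$ on these tails. Conjugating by $a$ moves words $1\xi$ to $0\xi$, where the generators act by $a$ or trivially. Concretely, among $b_\om,c_\om,d_\om$ at least one (in fact two) acts on words $0\xi$ as $0\xi\mapsto0a(\xi)$, because exactly one of them fixes $0\xi$. Pick such a generator $g\in\{b_\om,c_\om,d_\om\}$. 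Then $aga$ sends $1\xi\mapsto 1a(\xi)$. On words $0\xi$, $aga$ acts as $0\xi\mapsto 0g'(\xi)$, where $g'$ is the corresponding generator of $\cG_{\om'}$, but we only care about words starting with $1$.

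Now take two arbitrary words $u,v$ of length $n+1$. Applying $a$ if necessary, we may assume both begin with $1$, so $u=1\xi$ and $v=1\eta$ with $\xi,\eta$ of length $n$. By the inductive hypothesis there is a word in $a,b_{\om'},c_{\om'},d_{\om'}$ sending $\xi$ to $\eta$. We replace each letter $a$ by $aga$ and each $x_{\om'}$ by $x_\om$ for $x\in\{b,c,d\}$. The resulting element of $\cG_\om$ fixes the first letter $1$ at every step, since all these elements preserve the set of words beginning with $1$. It acts on the tail exactly as the original word does, so it sends $u$ to $v$.

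The only point needing care is the existence of $g$ acting as $a$ on the second letter after a leading $0$. This follows directly from the defining rules: for each $l$, exactly one of $b_{l\om'},c_{l\om'},d_{l\om'}$ fixes $0\xi$ and the other two send it to $0a(\xi)$. No real obstacle is expected beyond this bookkeeping; the induction must be done over all $\om$ at once, because the tail action involves the shifted sequence $\om'$.
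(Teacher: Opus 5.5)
Your proof is correct and follows essentially the same route as the paper. Both argue by induction on length, simultaneously over all $\om$, and both realize the action of $\cG_{\om'}$ on tails after a leading $1$ via $b_\om(1w)=1b_{\om'}(w)$ (and likewise for $c,d$) together with a conjugate $aga$ acting as $a$ on tails, then use $a$ to handle words starting with $0$. The only cosmetic difference is in the lifting step: the paper observes that the set of $g'\in\cG_{\om'}$ admitting some $g\in\cG_\om$ with $g(1w)=1g'(w)$ is a subgroup containing the generators, whereas you lift words letter by letter.
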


\begin{proof}
The proof is by induction on the length $k$ of the binary words, 
simultaneously for all $\om\in\{0,1,2\}$.  The case $k=0$ is trivial as 
$\varnothing$ is the only word of length $0$.  Now assume that the lemma 
holds for words of some length $k\ge0$ and every $\om\in\{0,1,2\}$.  Any 
binary word of length $k+1$ can be written as $0w$ or $1w$, where $w$ is a 
word of length $k$.  Any infinite ternary string $\om$ can be written as 
$l\om'$, where $l\in\{0,1,2\}$ and $\om'\in\{0,1,2\}^{\bN}$.  We obtain 
that $b_\om(1w)=1b_{\om'}(w)$, $c_\om(1w)=1c_{\om'}(w)$ and 
$d_\om(1w)=1d_{\om'}(w)$.  Further, out of three words $b_\om(0w)$, 
$c_\om(0w)$ and $d_\om(0w)$, two coincide with $0a(w)$ while the third one 
is $0w$.  Consequently, two of three words $ab_\om a(1w)$, $ac_\om a(1w)$ 
and $ad_\om a(1w)$ coincide with $1a(w)$.  Let $S'$ be the set of all 
elements $g'\in\cG_{\om'}$ such that $g(1w)=1g'(w)$ for some $g\in\cG_\om$ 
and all words $w$ of length $k$.  It is easy to see that $S'$ is a subgroup 
of $\cG_{\om'}$.  By the above, $a,b_{\om'},c_{\om'},d_{\om'}\in S'$.  
Therefore $S'=\cG_{\om'}$.  By the inductive assumption, the group 
$\cG_{\om'}$ acts transitively on words of length $k$.  It follows that all 
words of length $k+1$ that begin with $1$ are in the same orbit of the 
action of $\cG_\om$.  Since the generator $a$ changes only the first symbol 
in each word of length $k+1$, we conclude that the group $\cG_\om$ acts 
transitively on binary words of length $k+1$.
\end{proof}

The set $\{0,1\}^{\bN}$ of infinite binary strings is endowed with the 
product topology.  The topology is generated by the cylinder sets of the 
form $w\{0,1\}^{\bN}=\bigl\{w\eta\mid\eta\in\{0,1\}^{\bN}\bigr\}$, where 
$w\in\{0,1\}^*$.  The fact that the action of the group $\cG_\om$ on 
$\{0,1\}^{\bN}$ induces a length-preserving action on finite binary strings 
implies that $\cG_\om$ acts on $\{0,1\}^{\bN}$ by homeomorphisms.

\begin{lemma}\label{density-of-orbits}
Every orbit of the group $\cG_\om$ is dense in $\{0,1\}^{\bN}$.
\end{lemma}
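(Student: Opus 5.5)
The density of every orbit is a direct consequence of Lemma \ref{transitive-on-levels} together with the description of the topology on $\{0,1\}^{\bN}$ by cylinder sets. Fix an arbitrary string $\xi\in\{0,1\}^{\bN}$; we must show that its orbit $\cG_\om(\xi)$ meets every nonempty open set. Since the cylinders $w\{0,1\}^{\bN}$, $w\in\{0,1\}^*$, form a base of the topology, it is enough to show that for every binary word $w$ there is $g\in\cG_\om$ with $g(\xi)\in w\{0,1\}^{\bN}$, that is, $g(\xi)$ begins with $w$.

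Let $n$ be the length of $w$ and let $u$ be the prefix of length $n$ of $\xi$, so $\xi=u\eta$ for some $\eta\in\{0,1\}^{\bN}$. By Lemma \ref{transitive-on-levels}, there is $g\in\cG_\om$ whose induced action on words of length $n$ sends $u$ to $w$. Recall that the action on finite words is defined exactly so that the first $n$ symbols of $g(\xi)$ are the image of the first $n$ symbols of $\xi$ under the induced action. This holds for each generator, by the remark that the first $n$ symbols of $a(\xi)$, $b_\om(\xi)$, $c_\om(\xi)$, $d_\om(\xi)$ depend only on the first $n$ symbols of $\xi$ and agree with the recursive formulas applied to prefixes. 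It then passes to products of generators by induction on word length. Hence $g(\xi)=w\eta'$ for some $\eta'$, as required.

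No step should present a real obstacle. The only point needing care is the compatibility between the action on infinite strings and the induced action on prefixes, which is already asserted in the text preceding Lemma \ref{transitive-on-levels}. So the proof amounts to combining that compatibility with level transitivity.
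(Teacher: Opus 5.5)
Your proof is correct and follows the same route as the paper: level transitivity (Lemma \ref{transitive-on-levels}) puts a point of any orbit in every cylinder $w\{0,1\}^{\bN}$, and these cylinders form a base of the topology. Your explicit check that the action on infinite strings is compatible with the induced action on prefixes is a detail the paper leaves implicit.
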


\begin{proof}
Let $O$ be any orbit of $\cG_\om$.  By Lemma \ref{transitive-on-levels}, 
the group $\cG_\om$ acts transitively on binary words of any given length.  
It follows that any binary word $w$ occurs as a prefix of some string in 
$O$.  In other words, the orbit $O$ has a point in the cylinder 
$w\{0,1\}^{\bN}$.  Thus $O$ is dense in $\{0,1\}^{\bN}$.
\end{proof}

In the case a ternary string $\om\in\{0,1,2\}^{\bN}$ is eventually 
constant, it is not hard to show that the group $\cG_\om$ has a cyclic 
subgroup of finite index.  Hence the growth of $\cG_\om$ is polynomial in 
this case.  In all other cases, the growth turns out to be intermediate.

\begin{theorem}[\cite{Grig84}]\label{Gri-growth}
If a string $\om\in\{0,1,2\}^{\bN}$ is not eventually constant, then the 
group $\cG_\om$ has intermediate growth.  Moreover,
$\gamma_{\cG_\om,A_\om}(n)\ge\exp(\alpha\sqrt{n})$ for some $\alpha>0$ and 
all sufficiently large $n$.  If, additionally, for some $m\ge1$ any $m$ 
consecutive symbols of $\om$ include $0$, $1$ and $2$ then 
$\gamma_{\cG_\om,A_\om}(n)\le\exp(n^\beta)$ for some $\beta<1$ and all 
sufficiently large $n$.
\end{theorem}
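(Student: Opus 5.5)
The plan is to work entirely with the recursive structure of the generators: every element $g\in\cG_\om$ that fixes the first letter of every string splits into a pair of \emph{sections}, $g(0\xi)=0g_0(\xi)$ and $g(1\xi)=1g_1(\xi)$, where $g_0,g_1\in\cG_{\si\om}$ and $\si\om$ is the shifted ternary string. The stabilizer $H_\om$ of the first level has index $2$, and $\psi_\om\colon g\mapsto(g_0,g_1)$ embeds it into $\cG_{\si\om}\times\cG_{\si\om}$. By the recursive formulas, $\psi(b_\om)=(a\text{ or }1,\,b_{\si\om})$ and similarly for $c_\om,d_\om$, and $\psi(ab_\om a)$ is obtained by swapping the coordinates. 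So a reduced word of length $n$ in $H_\om$ (with $a$ alternating with letters from $\{b,c,d\}$) yields sections with
\[
|g_0|+|g_1|\le n+1 .
\]
I would first prove this \emph{contraction estimate} carefully. Then I would sharpen it: the letter among $b_\om,c_\om,d_\om$ whose first coordinate is trivial gives no $a$ in the section. Hence if all three symbols $0,1,2$ occur among the first $m$ positions of $\om$, passing $m$ levels down turns a word of length $n$ into $2^m$ words of total length at most $\lambda n+C$, for some $\lambda<1$ depending only on $m$.

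For the upper bound under the window hypothesis, I count as follows. An element of length $\le n$ is determined by the permutation it induces on level $m$ (at most $2^{2^m}$ choices) together with its $2^m$ sections in $\cG_{\si^m\om}$, whose lengths $n_i$ satisfy $\sum n_i\le\lambda n+C$. This gives a recursion of the form
\[
\ga_\om(n)\le 2^{2^m}\sum_{\sum n_i\le\lambda n+C}\ \prod_i\ga_{\si^m\om}(n_i).
\]
The window condition is shift invariant, so the same $\lambda,m,C$ work uniformly along all shifts. Inducting with the ansatz $\ga(n)\le\exp(Kn^\beta)$, and using concavity of $t^\beta$ together with a polynomial bound on the number of compositions of $\lambda n$, closes the induction once $\beta$ satisfies $\lambda\,2^{m(1-\beta)}<1$.

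For general non-eventually-constant $\om$, only subexponential growth is needed. Here I would show that the growth rate $\lim\ga(n)^{1/n}$ equals $1$. The point is that whenever $\om$ changes symbol, two consecutive levels use different trivial letters, so a definite proportion of length is lost; infinitely many such changes force the exponential rate down to $1$. This step I expect to be the main obstacle, because without a uniform window the loss per level is not uniform. The argument must pass to the limit along the shifts $\si^k\om$ while keeping constants under control, and it must handle words for which cancellation happens only at deep levels.

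For the lower bound $\exp(\al\sqrt n)$, I would show that $\psi_\om(H_\om)$ contains $K\times K$ for a finite-index subgroup $K\le\cG_{\si\om}$ (for instance the normal closure of $[a,b]$-type elements, or of $(ad)^4$-type elements, chosen according to the symbol of $\om$), with lengths controlled linearly: an element $(u,v)$ with $|u|,|v|\le n$ has a preimage of length $\le 2n+C$. This gives $\ga_\om(2n+C)\ge c\,\ga_{\si\om}(n)^2$. Combined with the fact that the group is infinite (Lemma \ref{transitive-on-levels}), iterating this inequality along the shifts yields $\exp(\al\sqrt n)$; to get the square-root exponent one needs the factor $2$ in the length bound, so I would aim to prove exactly that bound. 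This also rules out polynomial growth, completing the proof of intermediate growth.
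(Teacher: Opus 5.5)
The paper gives no proof of this theorem; it is quoted from \cite{Grig84}, so your plan has to stand on its own. Your upper bound under the window hypothesis is the standard Grigorchuk argument and is fine, up to one slip. Since $\sum_{i\le 2^m}n_i^\beta\le 2^{m(1-\beta)}\bigl(\sum_i n_i\bigr)^\beta$, the induction closes when $2^{m(1-\beta)}\lambda^{\beta}<1$, not when $\lambda\,2^{m(1-\beta)}<1$. This is harmless: take $\beta$ closer to $1$. The other two parts, however, contain genuine errors.

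\emph{Lower bound.} Iterating $\gamma_\om(2n+C)\ge c\,\gamma_{\si\om}(n)^2$ does not give $\exp(\al\sqrt n)$. It gives $\gamma_\om\bigl(2^k(n+C)\bigr)\ge c^{2^k-1}\gamma_{\si^k\om}(n)^{2^k}\ge c^{-1}\bigl(c(n+1)\bigr)^{2^k}$, which is \emph{exponential} growth as soon as $c(n+1)\ge2$. So the claim that factor $2$ yields $\sqrt n$ is an arithmetic error. Moreover, the length bound you aim for is false with uniform constants: a preimage of length $\le 2n+C$ for every $(u,v)\in K\times K$ with $|u|,|v|\le n$ would, for $\om=(01)^\infty$, contradict Theorem~\ref{GE-growth}.

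What does hold is doubling in \emph{each} coordinate. Use a substitution-type lift $\sigma$: $a\mapsto aya$ with $y$ nontrivial on the first coordinate, other letters fixed. It satisfies $|\sigma(w)|\le 2|w|+1$ and $\psi(\sigma(w))=(\tau(w),w)$, where $\tau(w)$ lies in the finite dihedral group $D=\langle a,y_{\si\om}\rangle$. For $\om=(01)^\infty$ this is exactly the paper's $\sub$, with $\tau(a)=d$, $\tau(c)=1$, $\tau(d)=a$ and $|D|=8$. Then $\psi\bigl(a\sigma(v)a\,\sigma(u)\bigr)=\bigl(v\tau(u),\tau(v)u\bigr)$. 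Hence $(u,v)\mapsto a\sigma(v)a\,\sigma(u)$ is at most $|D|^2$-to-one, with values of length $\le 2(|u|+|v|)+4$. This gives $\gamma_\om(4n+4)\ge|D|^{-2}\gamma_{\si\om}(n)^2$, and it is the factor $4$ that produces $\sqrt n$. For non-periodic $\om$ you also need uniformity along the shifts. $|D|$ grows with the length of the initial run of equal symbols in $\om$, so sequences with unbounded runs require an extra argument.

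\emph{Subexponential growth.} Your heuristic that each change of symbol costs a definite proportion of length is false whenever some symbol occurs only finitely often. This is exactly the case $\om=(01)^\infty$ treated in this paper. If $\om$ avoids $2$, then $\psi(b_\om)=(a,b_{\si\om})$ at every level and $\langle a,b_\om\rangle$ is infinite dihedral. Also $\psi\bigl((ab)^{2k}\bigr)=\bigl((b'a)^k,(ab')^k\bigr)$ with $b'=b_{\si\om}$, and the same happens at every depth, so these elements are never shortened. No contraction argument alone can work here; this is why $\cG_{(01)^\infty}$ grows almost exponentially (Theorem~\ref{GE-growth}).

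The missing idea is a counting argument. Put $\kappa_\om=\lim_n\gamma_{\cG_\om,A_\om}(n)^{1/n}$. Your estimate $|g_0|+|g_1|\le|g|+1$ gives $\kappa_\om\le\kappa_{\si\om}$, so $\kappa^*=\lim_k\kappa_{\si^k\om}$ exists. For $\om$ eventually avoiding $2$ (the other cases are symmetric), split geodesic words of length $n$ into two classes:
\begin{itemize}
\item Words with at least $\eps n$ letters $c,d$. Just after the next change of symbol, their sections have total length $\le(1-\delta(\eps))n+O(1)$.
\item Words with fewer such letters. These represent at most $e^{h(\eps)n}$ elements, with $h(\eps)\to0$.
\end{itemize}
Hence $\kappa^*\le\max\bigl(e^{h(\eps)},(\kappa^*)^{1-\delta(\eps)}\bigr)$ for every $\eps>0$, which forces $\kappa^*=1$ and therefore $\kappa_\om=1$. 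When all three symbols occur infinitely often, your pure contraction argument does suffice, via the same monotonicity.
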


In this paper we are interested in the group $\cG_{(01)^\infty}$ labeled by 
the periodic string $(01)^\infty=010101\ldots$.  The growth of this group 
is much faster than the upper bound in Theorem \ref{Gri-growth}.  The 
precise rate of growth was established by Erschler \cite{Erschler04} as a 
particular case of the following general result.

\begin{theorem}[\cite{Erschler04}]\label{GE-growth}
Suppose that a string $\om\in\{0,1,2\}^{\bN}$ contains no $2$ and for 
some $m\ge1$ any $m$ consecutive symbols of $\om$ include both $0$ and 
$1$.  Then
\[
\exp\left(\frac{n}{\log^{2+\eps}(n)}\right)\le \gamma_{\cG_\om,A_\om}(n)\le
\exp\left(\frac{n}{\log^{1-\eps}(n)}\right)
\]
for any $\eps>0$ and all sufficiently (depending on $\eps$) large $n$.
\end{theorem}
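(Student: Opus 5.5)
Theorem~\ref{GE-growth} is quoted from \cite{Erschler04}, and the paper uses it as a black box. If I had to reprove it, I would argue separately for the two bounds, using the self-similar structure of $\cG_\om$. The key structural fact is the following. Since $\om$ contains no $2$, the generator $b_\om$ never fixes a string of the form $1^k0x\eta$. Hence $a$ and $b_\om$ generate an infinite dihedral group, and words in $a,b_\om$ never contract. All contraction comes from $c_\om$ and $d_\om$, whose fixing pattern is governed by the positions of $0$s and $1$s in $\om$. Write $\si\om$ for $\om$ with its first symbol deleted, and $\psi(g)=(g_0,g_1)$ for the sections of $g$ on the subtrees $0\{0,1\}^{\bN}$ and $1\{0,1\}^{\bN}$. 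The recursive formulas then say that $g\in\cG_\om$ has $g_0,g_1\in\cG_{\si\om}$, possibly composed with $a$ at the root.

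For the upper bound I would first establish a length inequality $|g_0|+|g_1|\le|g|+1$ for a reduced word $g$ alternating $a$ with letters from $\{b_\om,c_\om,d_\om\}$. I would then refine it over $m$ levels. The hypothesis that every $m$ consecutive symbols of $\om$ contain both $0$ and $1$ guarantees that each occurrence of $c$ or $d$ in $g$ becomes trivial in some section within $m$ levels. Iterating over $k$ levels yields a bound
\[
\sum_{v\in\{0,1\}^{k}}|g_v|\;\le\;|g|-\eta\cdot\#\{\text{occurrences of }c,d\text{ in }g\}+O(2^k).
\]
Words with few $c,d$ letters are close to dihedral words and are counted directly. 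For the remaining words, the displayed bound gives a genuine contraction. I would then choose $k\approx\log n$, recursively counting elements by their $2^k$ sections, which gives $\ga(n)\le\exp\bigl(n/\log^{1-\eps}n\bigr)$. The $\eps$ loss comes from optimizing the split between the two kinds of words.

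For the lower bound I would look at the orbit Schreier graph of $1^\infty$, which is line-shaped. I would construct many distinct elements of length at most $n$ directly. Fix a dihedral segment of length $L\approx n/\log^{1+\eps}n$ travelled by a word in $a,b_\om$. At each of about $n/\log^{2+\eps}n$ well-separated sites, one may insert or omit a short conjugate of $c_\om$ or $d_\om$ (a ``lamp''); inserting a lamp costs $O(\log n)$ letters. Distinct on/off configurations are then distinguished by their action on suitable vertices at depth about $\log n$. Counting configurations gives $\exp\bigl(n/\log^{2+\eps}n\bigr)$ distinct elements.

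I expect the main obstacle to be the lower bound, specifically proving that the lamp configurations really give distinct group elements, i.e.\ that the inserted conjugates act independently on disjoint parts of the tree. My first attempt would use the level-$k$ sections: I would show that each inserted lamp changes exactly one section $g_v$ by a nontrivial element of $\cG_{\si^k\om}$, detectable by its action on the first letter after $v$.
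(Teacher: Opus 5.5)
The paper does not prove this theorem. It quotes it from \cite{Erschler04}, and the only consequence it uses (in the proof of Theorem~\ref{sub-main}) is that $\cG_{(01)^\infty}$ has intermediate growth. That already follows from Theorem~\ref{Gri-growth}, since $(01)^\infty$ is not eventually constant. So your sketch must stand on its own, and its lower-bound half has a genuine gap, exactly at the step you flagged.

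\textbf{Lower bound: the distinctness mechanism fails.}

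The conjugates are not localized. A conjugate $hc_\om h^{-1}$ with $|h|=O(\log n)$ has large support: $c_\om$ moves every string $1^j0x\eta$ for which the $(j+1)$-th symbol of $\om$ is $0$. That is a set of positive measure spread over infinitely many cylinders, and on the orbit of $1^\infty$ it is a positive proportion of vertices.

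Consequently a lamp changes many sections, not one. Multiplying by such an element changes the level-$k$ permutation on a positive proportion of vertices. Since $(gu)_v=g_{u(v)}u_v$, a positive proportion of the level-$k$ sections change, not exactly one.

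Localized elements are too expensive. An element supported in a single cylinder $v\{0,1\}^{\bN}$ with $|v|=k$ has length at least of order $2^k$. The reason is that a word of length $\ell$ acts identically on vertices of the linear Schreier graph whose $\ell$-neighbourhoods carry the same labels. These label patterns recur with period $O(\ell)$, and for $\ell\ll2^k$ such a shift changes the length-$k$ prefix.

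The lamps also interfere along the Schreier graph. Suppose you use $c_\om$ instead of $b_\om$ while a vertex sits on a $d$-labelled edge. That vertex then stays put instead of crossing. Since $ab_\om$ moves even and odd vertices of the ray in opposite directions, its entire later trajectory is reversed.

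Your budget is a further warning sign. You have $L+O(\log n)\cdot n/\log^{2+\eps}n=o(n)$, so if the scheme worked you could afford $\asymp n/\log n$ lamps and improve on Erschler's bound. The parameters were fitted to the target rather than forced by any constraint.

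In \cite{Erschler04} the lower bound is not a direct count but comes from random walks.
\begin{enumerate}
\item One takes a symmetric measure whose long jumps along the ray have a tail just heavy enough, roughly $\log^{1+\eps}r/r$, for the induced walk on the orbit of $1^\infty$ to be transient.
\item Transience makes the germ configuration (your lamps) stabilize. So the Poisson boundary is nontrivial, and the entropy of $\mu^{*t}$ grows linearly.
\item Since $\mu^{*t}$ is carried by words of length about $t\log^{2+\eps}t$, comparing entropy with growth gives $\exp(n/\log^{2+\eps}n)$.
\end{enumerate}
Nothing in your sketch replaces this step.

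\textbf{Upper bound: right mechanism, wrong parameter.}

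The mechanism is sound: a saving proportional to the number of $c_\om,d_\om$ letters, plus a direct count of words with few such letters. But $k\approx\log n$ cannot work. The additive $O(2^k)$ is then of order $n$. Even $2^k=n^\theta$ shrinks $\log(n/2^k)$ by a constant factor, which a saving $\eta\delta$ with $\delta\to0$ cannot repay.

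Instead take $k=m$ fixed and threshold $\delta\approx c_0/(\log^{1-\eps}n\,\log\log n)$.
\begin{itemize}
\item Words with fewer than $\delta n$ such letters number $\exp\bigl(O(c_0)\,n/\log^{1-\eps}n\bigr)$.
\item For the remaining words, the induction loses a factor $1+O(1/\log n)$ against a gain of $1-\eta\delta$.
\item The gain wins because $\eta\delta\log n\to\infty$, that is, precisely because the exponent is $1-\eps<1$.
\end{itemize}
The induction has to run simultaneously over all the groups $\cG_{\si^j\om}$, since the sections live there.
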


Note that our notation for the groups $\cG_\om$ and their generators is 
different from Erschler's notation (cf.\@ Remark 1 in \cite{Erschler04}).  
We reformulated Theorem \ref{GE-growth} (Corollary 2$'$ in 
\cite{Erschler04}) accordingly.  The group $\cG_{(01)^\infty}$ was the 
primary example for the theorem.  It turns out that $\cG_{(01)^\infty}$ is 
isomorphic to the subgroup of the topological full group 
$[[\si|\Omega(\xiGE)]]$ constructed at the end of Section \ref{sect-TFG}.

\begin{proposition}\label{inj-homomorph}
There exists a one-to-one group homomorphism 
$h:\cG_{(01)^\infty}\to[[\si|\Omega(\xiGE)]]$ such that $h(a)=\de_a$, 
$h(c)=\de_c$ and $h(d)=\de_d$, where $c=c_{(01)^\infty}$, 
$d=d_{(01)^\infty}$.
\end{proposition}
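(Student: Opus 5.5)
The plan is to compare the two actions through their Schreier graphs, which in both cases are lines with edges labelled by $a,c,d$. The paper's outline says Section \ref{sect-Schr} develops exactly such a technique. So I will first prove a general criterion and then check that both actions satisfy it with matching labels.

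\textbf{The general criterion.} Let $G_1$ act on a set $X_1$, generated by $S_1=\{s_1,\dots,s_k\}$, and let $G_2$ act on $X_2$, generated by $S_2=\{t_1,\dots,t_k\}$. Suppose both actions are faithful. Suppose also that for every $x_1\in X_1$ there are $x_2\in X_2$ and a label-preserving isomorphism from the component of the Schreier graph of $G_1$ through $x_1$ onto the component of the Schreier graph of $G_2$ through $x_2$, sending $x_1$ to $x_2$ and each edge labelled $s_i$ to an edge labelled $t_i$. Suppose the same holds with the roles of $G_1$ and $G_2$ reversed. Then $s_i\mapsto t_i$ extends to an isomorphism $G_1\to G_2$.

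The proof of the criterion is short. A word $w$ in the generators acts trivially on $X_1$ if and only if $w$ fixes every vertex of every Schreier component. Fixing a vertex is preserved under label-preserving isomorphisms of components. So the words acting trivially on $X_1$ are exactly those acting trivially on $X_2$. Since both actions are faithful, the two groups have the same relators. Only the forward direction (graphs of $G_1$ appear among those of $G_2$) is needed to get a well-defined homomorphism, and the reverse direction gives injectivity. In our case we take $G_1=\cG_{(01)^\infty}$ acting faithfully on $\{0,1\}^{\bN}$ and $G_2=\langle\de_a,\de_c,\de_d\rangle$ acting on $\Om(\xiGE)$.

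\textbf{Schreier graphs on the $\Om(\xiGE)$ side.} Each $\de_z$ moves the dot across an adjacent letter $z$ and otherwise fixes the string. Hence the orbit of $\om\in\Om(\xiGE)$ is the set of its shifts, and the component is a bi-infinite line $\dots,\si^{-1}\om,\om,\si\om,\dots$. The edge between $\si^{n-1}\om$ and $\si^n\om$ is labelled by the letter $x_n$. For each of the other two generators there is a loop at each endpoint. So a component is encoded by a bi-infinite word. Its letters alternate between $a$ and $\{c,d\}$, and its finite subwords are subwords of $\xiGE$.

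\textbf{Schreier graphs on the $\cG_{(01)^\infty}$ side.} I will compute these from the recursive definitions with $\om=(01)^\infty$. Along an orbit, $a$ changes the first symbol. A string $1^k0x\eta$ is fixed by $d$ when $k$ is even and by $c$ when $k$ is odd, and the other two of $b,c,d$ flip $x$. As in \cite{Vor12}, I expect each orbit graph to be a line, or a half-line in the exceptional orbit of $1^\infty$. Edges alternately carry $a$ and a double edge $\{b,c\}$ or $\{b,d\}$, and the non-$a$ generator sits as a loop. Dropping $b$ as redundant, these are lines labelled by alternating $a$ and $c/d$. By induction on levels, the label sequence on the finite level-$n$ graph should be $\sub^{n-1}(a)$-type words, namely $aca$, $acadaca$, and so on. This follows the pattern of Lemma \ref{xi-GE}, where the label at distance $2^km$ is determined by the parity of $k$.

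\textbf{Matching the two families of lines.} I will show that every bi-infinite labelled line on the group side has all its finite subwords in $\xiGE$, so by definition its label word lies in $\Om(\xiGE)$. Conversely, every $\om\in\Om(\xiGE)$ must occur as the labelling of some orbit. For the converse I would use compactness: $\om$ is a limit of labellings, and labellings of orbits of points in $\{0,1\}^{\bN}$ vary continuously, so a limit point $\xi$ in the compact space $\{0,1\}^{\bN}$ realizes $\om$. Alternatively, minimality (Lemma \ref{same-subwords}) together with density of orbits (Lemma \ref{density-of-orbits}) should suffice.

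The special orbit of $1^\infty$ needs separate care. Its graph is a half-line, or it is a two-ended line containing the symmetric pattern of Lemma \ref{xiGE-examples}. I expect that lemma handles it: the strings $\overleftarrow{\xiGE}c.\xiGE$ and $\overleftarrow{\xiGE}d.\xiGE$ correspond to the orbits through $0^\infty$-type points. For the injectivity direction, it suffices to embed each $\Om(\xiGE)$-line into a group-side graph, since fixing every vertex of all group-side components already forces a word to act trivially on $\{0,1\}^{\bN}$.

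The main obstacle is the precise identification of the Schreier graph labellings with $\xiGE$, that is, the level-by-level induction. This requires tracking how the substitution $a\to aca$, $c\to d$, $d\to c$ reflects the recursion $b_{l\om}(1\xi)=1b_\om(\xi)$ with the shifted sequence $\om$.
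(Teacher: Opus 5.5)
There is a genuine gap. Your argument rests on a criterion: every Schreier component on each side is isomorphic, as a labelled graph, to a component on the other side. That criterion is false for these two actions, and the step you propose to repair it does not work.

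On the $\cG_{(01)^\infty}$ side, the orbit of $1^\infty$ is a half-line. The vertex $1^\infty$ has a single non-loop edge, labelled $a$, and the graph is $\Gamma_{\{a,c,d\}}(\xiGE)$. On the other side, every orbit of $\langle\de_a,\de_c,\de_d\rangle$ in $\Om(\xiGE)$ is the shift orbit of a nonperiodic string. By Lemma \ref{Schr-subshift} it is therefore a bi-infinite line, and no component there is isomorphic to the half-line.

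The converse direction also fails. $\Gamma_{\{a,c,d\}}(\overleftarrow{\xiGE}c.\xiGE)$ and $\Gamma_{\{a,c,d\}}(\overleftarrow{\xiGE}d.\xiGE)$ are not Schreier graphs of any orbit in $\{0,1\}^{\bN}$. They arise only as limits of rooted orbit graphs of points tending to $1^\infty$. That is exactly where your claim that ``labellings vary continuously'' breaks down. Your compactness argument, applied to these strings, lands on $1^\infty$, whose graph is the half-line.

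To see that no point realises them, suppose a point's balls matched $\overleftarrow{\xiGE}x.\xiGE$. Then its position in the level path $\Gamma_{\{a,c,d\}}(\sub^{n-1}(a))$, taken modulo the reflection symmetry, would need unboundedly large $2$-adic valuation. These positions are compatible modulo $2^n$ under the folding maps between levels. This forces the point to sit at an end of every level path, i.e., to be $1^\infty$.

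The point $0^\infty$ is not cofinal with $1^\infty$ and has an ordinary line as its graph. So the identification of these two strings with ``$0^\infty$-type points'' is wrong.

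You also have the two directions of your criterion swapped. If every component of $\cG_{(01)^\infty}$ occurs on the subshift side, you get injectivity of $a\mapsto\de_a$, $c\mapsto\de_c$, $d\mapsto\de_d$, not well-definedness.

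What you are missing is that no matching of all orbits is needed. Both groups act by homeomorphisms and have a dense orbit: on $\{0,1\}^{\bN}$ by Lemma \ref{density-of-orbits}, and in $\Om(\xiGE)$ by minimality. So each action is already faithful on a single orbit. By Lemma \ref{factor-by-kernel}, each group is then $\cI_{\{a,c,d\}}$ modulo the kernel of that one Schreier graph, and it suffices to exhibit one orbit on each side with equal kernels.

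The paper pairs the half-line $\Gamma_{\{a,c,d\}}(\xiGE)$ with the doubled line $\Gamma_{\{a,c,d\}}(\overleftarrow{\xiGE}d.\xiGE)$, which is not isomorphic to it.
\begin{itemize}
\item The folding covering shows that the kernel of the line is contained in the kernel of the half-line.
\item Conversely, an element $g$ of the half-line kernel can only swap $v$ and $-1-v$ when $2v+1\le|g|$, so it moves only finitely many points of the line.
\item Transported into $[[\si|\Om(\xiGE)]]$, such an element moves finitely many points of a dense orbit in a Cantor set, so it is the identity (Lemmas \ref{Sch-line-halfline} and \ref{GE-line-halfline}).
\end{itemize}
This finiteness-plus-density step is the idea that handles the half-line/line mismatch you noticed.

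Alternatively, your own first matching claim, which is correct, already suffices if you use it for one orbit only. Take $\eta$ outside the orbit of $1^\infty$; its graph is a line $\Gamma_{\{a,c,d\}}(\om)$. For large $n$, each finite segment projects injectively onto a segment of the palindromic path $\Gamma_{\{a,c,d\}}(\sub^{n-1}(a))$, so $\om\in\Om(\xiGE)$. The $\langle\de_a,\de_c,\de_d\rangle$-orbit of $\om$ then has literally the same Schreier graph, hence the same kernel.

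Either way, you still need the level-by-level identification of the Schreier graphs (Lemma \ref{Schr-level}), which you flag as the main obstacle but have not carried out.
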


\section{Schreier graphs}\label{sect-Schr}

In this section we prove Proposition \ref{inj-homomorph}, which will allow 
us to prove Theorem \ref{sub-main} as well as Theorem \ref{main}.  The 
proof of Proposition \ref{inj-homomorph} is going to rely on the use of the 
Schreier graphs.

A \emph{graph} is a combinatorial object that consists of \emph{vertices} 
and \emph{edges} related so that every edge joins two vertices or a vertex 
to itself (in the latter case the edge is called a \emph{loop} at that 
vertex).  Vertices joined by an edge are called its \emph{endpoints}.  We 
allow multiple edges joining the same pair of vertices as well as multiple 
loops at the same vertex.  We consider graphs with \emph{undirected} edges, 
which means that the two endpoints of a non-loop edge are not ordered.  
Also, our graphs have labeled edges, that is, every edge is assigned a 
\emph{label}.

The vertices of a graph are pictured as dots or small circles.  An edge is 
pictured as an arc joining its endpoints.  The label of an edge is written 
next to the edge.

\begin{definition}\label{def-covering}
Let $\Gamma_1$ be a graph with vertex set $V_1$ and edge set $E_1$.  Let 
$\Gamma_2$ be a graph with vertex set $V_2$ and edge set $E_2$.  An onto 
map $f:V_1\to V_2$ is called a \textbf{covering} of the graph $\Gamma_2$ by 
the graph $\Gamma_1$ (and then $\Gamma_1$ is said to \textbf{cover} 
$\Gamma_2$) if there exists an onto map $h:E_1\to E_2$ such that for any 
edge $e\in E_1$ the endpoints of $h(e)$ are images of the endpoints of $e$ 
under $f$ and the label of $h(e)$ matches the label of $e$.  The covering 
$f:V_1\to V_2$ is called an \textbf{isomorphism} of the graph $\Gamma_1$ 
onto the graph $\Gamma_2$ (and then $\Gamma_1$ is said to be 
\textbf{isomorphic} to $\Gamma_2$) if $f$ is bijective and the associated 
map $h:E_1\to E_2$ can be chosen to be bijective.
\end{definition}

A \emph{path} of length $n\ge0$ in a graph $\Gamma$ is a sequence of its 
vertices $v_0,v_1,\dots,v_n$ along with a sequence of edges $e_1,\dots,e_n$ 
such that for any $i$, $1\le i\le n$ the edge $e_i$ joins the vertex 
$v_{i-1}$ to $v_i$.  We say that the path connects the vertex $v_0$ to 
$v_n$.  The graph $\Gamma$ is called \emph{connected} if any two vertices 
of $\Gamma$ are connected by a path.  The length of the shortest path 
connecting a vertex $u$ to a vertex $v$ is called the \emph{distance} from 
$u$ to $v$ in the graph.  This distance function turns the vertex set of a 
connected graph into a metric space.

\begin{definition}\label{def-Schr}
Let $G$ be a group with a finite generating set $A$.  Let 
$\alpha:G\curvearrowright X$ be an action of $G$ on a set $X$ and $O$ be an 
orbit of that action.  The \textbf{Schreier graph} 
$\Gamma_{\Sch}(G,A;\al,O)$ of the orbit $O$ of the action $\alpha$ of the 
group $G$ relative to the generating set $A$ is a graph with vertex set $O$ 
and edges labeled by elements of $A$.  For any $u,v\in O$ and $l\in A$ the 
graph $\Gamma_{\Sch}(G,A;\al,O)$ has a (unique) edge with label $l$ 
connecting the vertex $u$ to $v$ if and only if $lu=v$ or $lv=u$ within the 
action $\al$.  In the case the generators in $A$ are indexed by letters of 
an alphabet $\cA$, we might regard edge labels in 
$\Gamma_{\Sch}(G,A;\al,O)$ as elements of $\cA$.
\end{definition}

If $l\in A$ is a generator of the group $G$ then any vertex $v\in O$ of the 
Schreier graph $\Gamma_{\Sch}(G,A;\al,O)$ is joined to $lv$ and $l^{-1}v$ 
by edges with label $l$.  It follows that the graph 
$\Gamma_{\Sch}(G,A;\al,O)$ is connected.  Moreover, for any $v\in O$ and 
$g\in G$ the distance between vertices $v$ and $gv$ in this graph does not 
exceed the length of the element $g$ relative to the generating set $A$.

We denote by $\Sch(G,A)$ the set of all Schreier graphs of the form 
$\Gamma_{\Sch}(G,A;\al,O)$.  If a generator $l\in A$ is an involution 
($l^{-1}=l$), then the conditions $lu=v$ and $lv=u$ are equivalent for all 
$u,v\in O$.  Hence the action of $l$ on the orbit $O$ is uniquely recovered 
from the graph $\Gamma_{\Sch}(G,A;\al,O)$.  In the case when all generators 
in $A$ are involutions, the graph $\Gamma_{\Sch}(G,A;\al,O)$ allows to 
reconstruct the action of the entire group $G$ on $O$.  Therefore in that 
case, elements of the set $\Sch(G,A)$ encode all transitive actions of the 
group $G$.

\begin{definition}\label{def-I_A}
Given an alphabet $\cA$, we define a group $\cI_{\cA}$ by 
$\cI_{\cA}=\langle\cA\mid z^2=1,\ z\in\cA\rangle$.  That is, elements of 
$\cA$ are considered nontrivial involutions, and the group $\cI_{\cA}$ is 
freely generated by them.
\end{definition}

\begin{lemma}\label{Sch-I_A}
A graph $\Gamma$ with labeled edges belongs to $\Sch(\cI_{\cA},\cA)$ if and 
only if the following conditions hold:
\begin{itemize}
\item
the graph $\Gamma$ is connected;
\item
all edge labels are in $\cA$;
\item
for any vertex $v$ of $\Gamma$ and any $z\in\cA$ there is a unique edge 
with $v$ as an endpoint and labeled by $z$.
\end{itemize}
\end{lemma}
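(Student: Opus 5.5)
We prove the two implications separately. The intended reading of the edge convention in Definition~\ref{def-Schr} is that, for each generator $l\in\cA$ and each unordered pair $\{u,v\}$ with $lu=v$ (equivalently $lv=u$, since $l$ is an involution), there is exactly one $l$-edge joining $u$ and $v$. When $u=v$ this edge is a loop.

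\emph{Necessity.} Let $\Gamma=\Gamma_{\Sch}(\cI_{\cA},\cA;\al,O)$. Connectedness was already observed after Definition~\ref{def-Schr}, and all labels lie in $\cA$ by construction. For the third condition, fix a vertex $v$ and a letter $z\in\cA$. Any $z$-edge at $v$ joins $v$ to some $u$ with $zv=u$ or $zu=v$. Because $z$ acts as an involution, both conditions say $u=zv$. Hence the only $z$-edge at $v$ is the unique edge joining $v$ and $zv$, and it exists. This covers both the loop case $zv=v$ and the non-loop case.

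\emph{Sufficiency.} Let $\Gamma$ satisfy the three conditions, with vertex set $V$. For each $z\in\cA$, define a map $\hat z:V\to V$ by letting $\hat z(v)$ be the other endpoint of the unique $z$-edge at $v$; if that edge is a loop, set $\hat z(v)=v$. We check that $\hat z$ is an involution. If $e$ is the $z$-edge at $v$ with other endpoint $u$, then $e$ is also a $z$-edge at $u$. By uniqueness at $u$, it is \emph{the} $z$-edge at $u$, so $\hat z(u)=v$. Thus $\hat z^2=\id_V$.

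By the universal property of the presentation in Definition~\ref{def-I_A}, the assignment $z\mapsto\hat z$ extends to an action $\al$ of $\cI_{\cA}$ on $V$. Since $\Gamma$ is connected, any two vertices are joined by a path, and reading the labels along that path gives a group element carrying one vertex to the other. So $V$ is a single orbit $O$.

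It remains to identify $\Gamma$ with $\Gamma_{\Sch}(\cI_{\cA},\cA;\al,V)$, taking the identity map on vertices. Each $z$-edge of $\Gamma$ joins some $v$ to $\hat z(v)$, so it corresponds to an edge of the Schreier graph. Conversely, each Schreier edge joining $v$ and $zv$ is realized by the $z$-edge of $\Gamma$ at $v$. This correspondence is injective: two distinct $z$-edges of $\Gamma$ joining the same pair of vertices would give $v$ two $z$-edges, contradicting uniqueness. Hence the correspondence is a bijection that preserves labels and endpoints, and so $\Gamma$ is isomorphic to the Schreier graph.

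The main obstacle is the bookkeeping for loops and multiple edges. We must make sure that the Schreier graph has exactly one $z$-loop at a fixed point of $z$, and not two (one from $zu=v$ and one from $zv=u$), so that the uniqueness condition matches exactly. This is secured by the phrase ``a (unique) edge'' in Definition~\ref{def-Schr}.
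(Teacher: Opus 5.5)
Your proposal is correct and follows essentially the same route as the paper. Necessity comes from connectedness and from the equivalence of $zu=v$ and $zv=u$ when $z$ is an involution. Sufficiency comes from reading off involutions from the unique $z$-edges, extending them to an action of $\cI_{\cA}$ via the presentation, and using connectedness for transitivity. Your explicit edge bijection and your remarks on loops simply spell out what the paper compresses into ``by construction.''
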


\begin{proof}
Any Schreier graph is connected.  For a graph of the form 
$\Gamma_{\Sch}(\cI_{\cA},\cA;\al,O)$, all edge labels are in $\cA$.  
Furthermore, for any $u,v\in O$ and $z\in\cA$, the relations $zu=v$ and 
$zv=u$ are equivalent since $z$ is an involution.  Hence for any $v\in O$ 
and $z\in\cA$, the vertex $v$ is an endpoint of a unique edge with label 
$z$, which joins $v$ to $zv$.

Conversely, assume that a graph $\Gamma$ satisfies all three conditions of 
the lemma.  Let $V$ be the vertex set of $\Gamma$.  For any $z\in\cA$ and 
$v\in V$, the vertex $v$ is an endpoint of a unique edge with label $z$.  
Let $T_z(v)$ denote the vertex of $\Gamma$ joined to $v$ by that edge.  
This defines a family of transformations $T_z$, $z\in\cA$ of the set $V$.  
Since edges of the graph $\Gamma$ are undirected, it follows that each 
$T_z$ is an involution.  Then there exists a homomorphism 
$\phi:\cI_{\cA}\to\mathrm{Sym}(V)$ of the group $\cI_{\cA}$ to the group 
$\mathrm{Sym}(V)$ of all invertible transformations of $V$ such that 
$\phi(z)=T_z$ for all $z\in\cA$.  The homomorphism $\phi$ induces an action 
$\al:\cI_{\cA}\curvearrowright V$ given by $gv=\bigl(\phi(g)\bigr)(v)$ for 
all $g\in\cI_{\cA}$ and $v\in V$.  Note that any two vertices of $\Gamma$ 
joined by an edge are in the same orbit of the action $\al$.  Since the 
graph $\Gamma$ is connected, it follows that the action is transitive.  By 
construction, $\Gamma=\Gamma_{\Sch}(\cI_{\cA},\cA;\al,V)$.
\end{proof}

Let $\xi$ be a word $x_1x_2\ldots x_n\in\cA^*$, an infinite string 
$x_1x_2x_3\ldots\in\cA^{\bN}$ or a bi-infinite string $\ldots 
x_{-1}x_0.x_1x_2\ldots\in\cA^{\bZ}$.  Suppose that there are no double 
letters in $\xi$.  Then we define a graph $\Gamma_{\cA}(\xi)$ with labeled 
edges as follows.  The vertex set $V$ of the graph is $\{0,1,\dots,n\}$ if 
$\xi$ is a word of length $n\ge0$, $\{0,1,2,\dots\}$ if $\xi$ is an 
infinite string, and $\bZ$ if $\xi$ is a bi-infinite string.  For any 
$i\in\bZ$ such that $\{i-1,i\}\subset V$, we connect the vertex $i$ to 
$i-1$ by a unique edge with label $x_i$.  These are the only non-loop edges 
of $\Gamma_{\cA}(\xi)$.  Further, for any $i\in V$ and $x\in\cA$ such that 
the vertex $i$ is not connected to $i-1$ or $i+1$ by an edge with label 
$x$, we add a unique loop at $i$ with label $x$.  These are the only loops 
of $\Gamma_{\cA}(\xi)$.

By construction, $\Gamma_{\cA}(\xi)$ is a connected graph with a linear 
structure.  All edge labels are letters in $\cA$.  Since $\xi$ contains no 
double letters, for any $i\in V$ and $x\in\cA$ the vertex $i$ is an 
endpoint of a unique edge with label $x$.  Hence the graph 
$\Gamma_{\cA}(\xi)$ belongs to $\Sch(\cI_{\cA},\cA)$ due to Lemma 
\ref{Sch-I_A}.  For examples of such graphs, see Figures \ref{fig-lvl123}, 
\ref{fig-halfline} and \ref{fig-line}.

Suppose $\si_{\cA}|Y$ is a subshift over an alphabet $\cA$ such that for 
any letter $z\in\cA$ the generalized $2$-cycle $\de_z$ is defined on $Y$.  
Let $G_Y$ be the subgroup of the topological full group $[[\si_{\cA}|Y]]$ 
generated by involutions $\de_z$, $z\in\cA$.  The group $G_Y$ is uniquely 
determined by $\cA$ and $Y$.

\begin{lemma}\label{Schr-subshift}
Under the above assumptions, for any bi-infinite string $\xi\in Y$ the 
orbit of $\xi$ under the action of the group $G_Y$ is 
$\{\si_{\cA}^i(\xi)\mid i\in\bZ\}$.  If the string $\xi$ is not periodic 
then the Schreier graph of its orbit relative to the generating set 
$\{\de_z\mid z\in\cA\}$ is isomorphic to $\Gamma_{\cA}(\xi)$.
\end{lemma}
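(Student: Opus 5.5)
Write $\xi=\ldots x_{-1}x_0.x_1x_2\ldots$ and $\xi_i=\si_{\cA}^i(\xi)$ for $i\in\bZ$. I would first compute how each generator acts on the shifted strings. From the description of $\de_z$ given after Definition \ref{def-2-cycle}, $\de_z$ moves the dot across an adjacent letter equal to $z$, and fixes the string otherwise. The string $\xi_i$ has its dot between the letters $x_i$ (on the left) and $x_{i+1}$ (on the right). Since $\si_{\cA}(\xi_i)=\xi_{i+1}$, we get
\[
\de_z(\xi_i)=\begin{cases}
\xi_{i+1} & \text{if } x_{i+1}=z,\\
\xi_{i-1} & \text{if } x_i=z,\\
\xi_i & \text{otherwise.}
\end{cases}
\]
The first two cases cannot occur simultaneously, because $Y$ contains no double letters $zz$; this is exactly why $\de_z$ is defined on $Y$.

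From this formula the set $\{\xi_i\mid i\in\bZ\}$ is invariant under every generator, so it contains the $G_Y$-orbit of $\xi$. For the reverse inclusion, I go from $\xi_i$ to $\xi_{i+1}$ by $\de_{x_{i+1}}$, and from $\xi_i$ to $\xi_{i-1}$ by $\de_{x_i}$. Induction in both directions then shows that every $\xi_i$ lies in the orbit. This proves the first claim.

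For the Schreier graph, assume $\xi$ is not periodic. Then the map $i\mapsto\xi_i$ is a bijection from $\bZ$ onto the orbit, since $\xi_i=\xi_j$ with $i\ne j$ would force $\si^{j-i}(\xi)=\xi$. I take this bijection as the vertex map from $\Gamma_{\cA}(\xi)$ to the Schreier graph, identifying each generator $\de_z$ with its label $z$.

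It remains to match edges, and this is the step that needs the most care. All generators are involutions, so for each $z$ the Schreier graph has at each vertex exactly one edge labeled $z$, joining $\xi_i$ to $\de_z(\xi_i)$; this follows from Lemma \ref{Sch-I_A}. In $\Gamma_{\cA}(\xi)$ the vertex $i$ likewise has exactly one $z$-edge. If $x_{i+1}=z$, that edge goes to $i+1$; if $x_i=z$, it goes to $i-1$; otherwise it is a loop at $i$. The displayed formula shows that these incidences coincide with those in the Schreier graph. Non-loop edges $\{i-1,i\}$ with label $x_i$ then correspond bijectively to Schreier edges between $\xi_{i-1}$ and $\xi_i$ with label $\de_{x_i}$, and loops correspond to loops.

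The one point to check is that no two distinct labels or edges get identified. This is guaranteed because the vertex map is bijective and each vertex carries exactly one edge of each label in both graphs. So the edge map is a label-preserving bijection, and the vertex map is a graph isomorphism in the sense of Definition \ref{def-covering}.
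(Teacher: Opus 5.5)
Your proof is correct and follows the paper's argument. You compute $\de_z(\si_{\cA}^i(\xi))$ exactly as the paper does, get the orbit from invariance plus the steps $\si_{\cA}^i(\xi)\mapsto\si_{\cA}^{i\pm1}(\xi)$, and use the map $i\mapsto\si_{\cA}^i(\xi)$, which is bijective for non-periodic $\xi$, as the isomorphism from $\Gamma_{\cA}(\xi)$; your explicit edge-by-edge matching simply spells out what the paper compresses into ``$f$ is a covering and is bijective, hence an isomorphism.''
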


\begin{proof}
Since $\xi\in Y$ and each of the generalized $2$-cycles $\de_z$, $z\in\cA$ 
is defined on $Y$, the string $\xi$ has no double letters.  Let $\xi=\ldots 
x_{-1}x_0.x_1x_2\ldots$.  Then $\si_{\cA}^i(\xi)=\ldots 
x_{i-1}x_i.x_{i+1}x_{i+2}\ldots$ for any $i\in\bZ$.  In the Schreier graph 
of the orbit of $\si_{\cA}^i(\xi)$ under the action of the group $G_Y$, for 
any $z\in\cA$ the vertex $\si_{\cA}^i(\xi)$ is an endpoint of a unique edge 
with label $z$.  This edge joins $\si_{\cA}^i(\xi)$ to 
$\si_{\cA}^{i-1}(\xi)$ if $z=x_i$, to $\si_{\cA}^{i+1}(\xi)$ if 
$z=x_{i+1}$, and to itself otherwise.

Let $O$ be the orbit of the string $\xi$ under the action of the group 
$G_Y$ and $\Gamma$ be the Schreier graph of that orbit relative to the 
generating set $\{\de_z\mid z\in\cA\}$ (with edge labels in $\cA$).  Since 
each generator is piecewise a power of the subshift $\si_{\cA}|Y$, the set 
$\{\si_{\cA}^i(\xi)\mid i\in\bZ\}$ is invariant under the action of $G_Y$.  
Therefore the orbit $O$ is contained in this set.  On the other hand, it 
follows from the above that for any $i\in\bZ$ the strings 
$\si_{\cA}^i(\xi)$ and $\si_{\cA}^{i-1}(\xi)$ are in the same orbit of the 
group $G_Y$.  We conclude that $O=\{\si_{\cA}^i(\xi)\mid i\in\bZ\}$.

Consider a map $f:\bZ\to O$ given by $f(i)=\si_{\cA}^i(\xi)$ for all 
$i\in\bZ$.  The map $f$ is onto.  It follows from the definition of the 
graph $\Gamma_{\cA}(\xi)$ that $f$ is a covering of the Schreier graph 
$\Gamma$ by $\Gamma_{\cA}(\xi)$.  In the case when the string $\xi$ is not 
periodic, the map $f$ is bijective.  Then $f$ is an isomorphism of the 
graph $\Gamma_{\cA}(\xi)$ onto $\Gamma$.
\end{proof}

Our next goal is to describe some Schreier graphs of the group 
$\cG_{(01)^\infty}$ relative to the generating set $\{a,c,d\}$ (see Lemmas 
\ref{Schr-level} and \ref{Schr-half-line} below).

\begin{figure}[t]
\centerline{\includegraphics[scale=1.25]{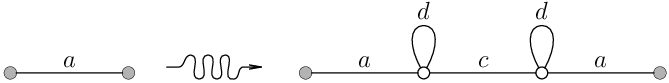}}
\caption{Edge substitution acting on a non-loop edge with label $a$.}
\label{fig-edgesub}
\end{figure}

\begin{lemma}\label{edge-sub}
Suppose $w$ is a nonempty word without double letters over the alphabet 
$\{a,c,d\}$.  Then the graph $\Gamma_{\{a,c,d\}}(w)$ can be transformed 
into a graph isomorphic to $\Gamma_{\{a,c,d\}}\bigl(\sub(w)\bigr)$ via the 
following edge substitution procedure:
\begin{itemize}
\item
every old edge with label $a$ joining two different vertices $u$ and $v$ is 
removed; instead we insert two new vertices $u'$ and $v'$, and add five new 
edges: an edge with label $a$ joining $u$ to $u'$, an edge with label $c$ 
joining $u'$ to $v'$, an edge with label $a$ joining $v'$ to $v$, and two 
loops with label $d$ at the vertices $u'$ and $v'$ (see Figure 
\ref{fig-edgesub});

\item
every old loop with label $a$ is preserved;

\item
for every old edge with label $c$, the label is changed to $d$;

\item
for every old edge with label $d$, the label is changed to $c$.
\end{itemize}
\end{lemma}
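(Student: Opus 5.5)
The plan is to compare both graphs position by position. Write $w=x_1x_2\ldots x_n$, so that $\Gamma_{\{a,c,d\}}(w)$ has vertices $0,1,\dots,n$, with vertex $i-1$ joined to $i$ by an edge labeled $x_i$, together with loops. Since $\sub(a)=aca$ has length $3$ while $\sub(c)=d$ and $\sub(d)=c$ have length $1$, the word $\sub(w)=\sub(x_1)\ldots\sub(x_n)$ has length $n+2m$, where $m$ is the number of occurrences of $a$ in $w$. I will define an explicit bijection $\psi$ from the vertex set of the transformed graph onto $\{0,1,\dots,n+2m\}$. An old vertex $i$ is sent to $p_i=|\sub(x_1\ldots x_i)|$. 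For each old edge $a$ between $i-1$ and $i$, the two inserted vertices $u',v'$ go to $p_{i-1}+1$ and $p_{i-1}+2$. Since $p_i-p_{i-1}=|\sub(x_i)|$, this map is bijective and preserves the linear order.

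Next I check the non-loop edges. If $x_i=a$, the segment of $\sub(w)$ in positions $p_{i-1}+1,\dots,p_{i-1}+3$ is $aca$. In $\Gamma(\sub(w))$ this gives the path with labels $a,c,a$, which matches the edges $u$--$u'$ labeled $a$, $u'$--$v'$ labeled $c$, and $v'$--$v$ labeled $a$. If $x_i=c$ or $x_i=d$, the single letter $\sub(x_i)$ is $d$ or $c$, which matches the relabeled edge. So the non-loop edges correspond bijectively with matching labels.

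It remains to check the loops. In both graphs, each vertex carries exactly one edge of each label, by Lemma \ref{Sch-I_A} and the construction of $\Gamma$. For the target graph this needs $\sub(w)$ to have no double letters. This holds because $\sub(w)$ alternates between blocks $aca$ and single letters $c,d$. Two $a$'s from different blocks are never adjacent, since $w$ has no $aa$. Two letters from $\{c,d\}$ are never adjacent, since $w$ would then contain two adjacent letters from $\{c,d\}$; each such pair in $w$ is distinct, but $\sub$ turns it into $dc$ or $cd$, so no doubling occurs. Also $c$ or $d$ next to a block $aca$ is fine.

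Given this, the loops in each graph are determined by the non-loop edges: a vertex gets a loop labeled $z$ exactly when no non-loop edge labeled $z$ meets it. Because $\psi$ matches the non-loop edges, the loops must match as well. Concretely, the inserted vertices $u',v'$ each meet only edges labeled $a$ and $c$, so they need a $d$-loop, as stipulated. An old vertex keeps its old $a$-loop if it had one, since its $a$-neighbors are unchanged. Its $c$/$d$ loops are swapped, consistent with the relabeling of the edges it meets. The only delicate point is this loop bookkeeping at the old vertices, and the uniqueness argument above handles it without case analysis.
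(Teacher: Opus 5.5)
Your proof is correct and follows essentially the same route as the paper: the edge substitution reproduces $\sub$ on the labels of the non-loop edges, and since every vertex in both graphs carries exactly one edge of each label, the loops are forced to match. You are more explicit than the paper in writing down the vertex bijection via the prefix lengths $|\sub(x_1\ldots x_i)|$ and in checking that $\sub(w)$ has no double letters, a fact the paper uses tacitly so that $\Gamma_{\{a,c,d\}}\bigl(\sub(w)\bigr)$ is defined.
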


\begin{proof}
The procedure transforms the graph $\Gamma_{\{a,c,d\}}(w)$ into a graph 
$\Gamma$.  First we show that the graph $\Gamma$ satisfies all three 
conditions in Lemma \ref{Sch-I_A} so that 
$\Gamma\in\Sch(\cI_{\{a,c,d\}},\{a,c,d\})$.  Any two vertices of the graph 
$\Gamma_{\{a,c,d\}}(w)$ joined by an edge are also connected in $\Gamma$ 
(by an edge or by a path of length $3$).  Any newly added vertex in 
$\Gamma$ is connected to an old vertex by an edge.  Since 
$\Gamma_{\{a,c,d\}}(w)$ is a connected graph, it follows that the graph 
$\Gamma$ is connected as well.  Any vertex $v$ of $\Gamma_{\{a,c,d\}}(w)$ 
is an endpoint of three edges, which are labeled $a$, $c$ and $d$.  In the 
graph $\Gamma$, the edges with labels $c$ and $d$ are retained while their 
labels are exchanged.  The edge with label $a$ is retained if it is a 
loop.  Otherwise it is removed but another edge with an endpoint $v$ is 
added, which has label $a$.  Any newly added vertex of $\Gamma$ is an 
endpoint of three edges, labeled $a$, $c$ and $d$.  Thus the three 
conditions in Lemma \ref{Sch-I_A} are satisfied.

Let $\Gamma'$ and $\Gamma'_{\{a,c,d\}}\bigl(\sub(w)\bigr)$ be graphs 
obtained from respectively $\Gamma$ and 
$\Gamma_{\{a,c,d\}}\bigl(\sub(w)\bigr)$ by removing all loops.  The action 
of the edge substitution on non-loop edges mimics the action of 
the substitution $\sub$ on labels of those edges.  It follows that the 
graphs $\Gamma'$ and $\Gamma'_{\{a,c,d\}}\bigl(\sub(w)\bigr)$ are 
isomorphic.  Since the graphs $\Gamma$ and 
$\Gamma_{\{a,c,d\}}\bigl(\sub(w)\bigr)$ are both in 
$\Sch(\cI_{\{a,c,d\}},\{a,c,d\})$, for any vertex $v$ of either of the 
graphs and any letter $z\in\{a,c,d\}$ the loop with label $z$ at $v$ exists 
if and only if there is no edge with label $z$ joining $v$ to another 
vertex.  As a consequence, any isomorphism of the graph $\Gamma'$ onto 
$\Gamma'_{\{a,c,d\}}\bigl(\sub(w)\bigr)$ is also an isomorphism of $\Gamma$ 
onto $\Gamma_{\{a,c,d\}}\bigl(\sub(w)\bigr)$.
\end{proof}

\begin{figure}[t]
\centerline{\includegraphics{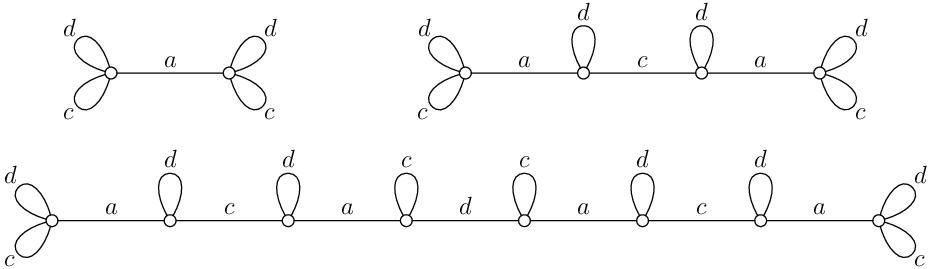}}
\caption{Graphs $\Gamma_{\{a,c,d\}}(a)$, 
$\Gamma_{\{a,c,d\}}\bigl(\sub(a)\bigr)$ and 
$\Gamma_{\{a,c,d\}}\bigl(\sub^2(a)\bigr)$.}
\label{fig-lvl123}
\end{figure}

\begin{lemma}\label{Schr-level}
The Schreier graph of the action of the group $\cG_{(01)^\infty}$ on binary 
words of length $k\ge1$ is isomorphic to the graph 
$\Gamma_{\{a,c,d\}}\bigl(\sub^{k-1}(a)\bigr)$.
\end{lemma}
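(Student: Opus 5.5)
Induction on $k$, done simultaneously for all strings $\om$ of the form $(01)^\infty$ and $(10)^\infty$, since the recursion shifts $\om$. Write $\om=(01)^\infty$ and $\om'=(10)^\infty$. Under the generating sets $\{a,c_\om,d_\om\}$ and $\{a,c_{\om'},d_{\om'}\}$, we expect the level-$k$ Schreier graph of $\cG_\om$ to be $\Gamma_{\{a,c,d\}}(\sub^{k-1}(a))$. The graph for $\cG_{\om'}$ should be the same graph with the labels $c$ and $d$ exchanged. Indeed, swapping the roles of $0$ and $1$ in $\om$ swaps $c$ and $d$: $c_{1\om}$ fixes $0\xi$ while $d_{0\om}$ fixes $0\xi$. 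More cleanly, we prove the statement for $\cG_{(01)^\infty}$ and use the symmetric statement for $\cG_{(10)^\infty}$, which has the same proof. The base case $k=1$ is immediate. On the two words $0$ and $1$, $a$ swaps them, and $c,d$ fix both (on the empty tail they act trivially). So the graph is one $a$-edge with $c,d$ loops at both ends, which is exactly $\Gamma_{\{a,c,d\}}(a)$.

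For the inductive step, we describe the level-$(k+1)$ Schreier graph of $\cG_\om$ in terms of the level-$k$ graph of $\cG_{\om'}$, following the edge substitution of Lemma \ref{edge-sub}. Words of length $k+1$ split as $1w$ and $0w$. Here $\om=0\om'$, so by the recursion $c_\om(1w)=1c_{\om'}(w)$, $d_\om(1w)=1d_{\om'}(w)$, $c_\om(0w)=0a(w)$ and $d_\om(0w)=0w$. Thus the words $1w$ span a copy of the level-$k$ graph of $\cG_{\om'}$ with only its $c,d$-edges kept. For each $w$, the $a$-edge of $\cG_{\om'}$ between $w$ and $a(w)$ (when $a(w)\ne w$, which always holds at level $k\ge1$) is replaced by the path $1w\stackrel{a}{-}0w\stackrel{c}{-}0a(w)\stackrel{a}{-}1a(w)$, with $d$-loops at $0w$ and $0a(w)$. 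This is precisely Figure \ref{fig-edgesub}. So the level-$(k+1)$ graph of $\cG_\om$ is obtained from the level-$k$ graph of $\cG_{\om'}$ by the edge-substitution procedure, except that the $c/d$ relabelling is not yet accounted for.

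This is where the interplay with $\om'$ enters. The level-$k$ graph of $\cG_{\om'}$ is $\Gamma_{\{a,c,d\}}(\sub^{k-1}(a))$ with $c$ and $d$ swapped, by the symmetric induction hypothesis. Applying the procedure above, which keeps $c,d$ labels on old edges, gives exactly the Lemma \ref{edge-sub} procedure applied to $\Gamma_{\{a,c,d\}}(\sub^{k-1}(a))$ itself. By Lemma \ref{edge-sub}, the result is isomorphic to $\Gamma_{\{a,c,d\}}(\sub^{k}(a))$. The symmetric claim for $\om'$ is proved the same way: for $\om'=1\om$, the roles of $c$ and $d$ on words $0w$ are exchanged, and the level-$k$ graph of $\cG_\om$ is unswapped. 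This gives the swapped version of $\Gamma_{\{a,c,d\}}(\sub^k(a))$. All loops are forced, since both graphs lie in $\Sch(\cI_{\{a,c,d\}},\{a,c,d\})$, so it suffices to match the non-loop edges, as in the proof of Lemma \ref{edge-sub}. We need no double letters in $\sub^{k-1}(a)$, which holds because it is a subword of $\xiGE$.

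The main obstacle is bookkeeping the $c\leftrightarrow d$ swap correctly through the alternation of $\om$ and $\om'$. One must check the base case for both parities and confirm which generator fixes $0w$ at each step. A second subtlety is that when $k=1$ the $a$-edges in the smaller graph are genuine non-loop edges, so that no $a$-loop case arises in the substitution. Everything else is a direct reading of the recursive definitions.
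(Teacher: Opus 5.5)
Your proof is correct and follows the same route as the paper. It argues by induction on $k$, identifying the level-$(k+1)$ graph with the edge substitution of Lemma~\ref{edge-sub} applied to the level-$k$ graph via $1w$ and $0w$ (the paper's $f(v)=1v$, $f(v')=0v$); the only difference is bookkeeping, since instead of your simultaneous induction over $(01)^\infty$ and $(10)^\infty$, the paper reads off from the definitions that $c_{(10)^\infty}=d_{(01)^\infty}$ and $d_{(10)^\infty}=c_{(01)^\infty}$, so that $c(1v)=1d(v)$ and $d(1v)=1c(v)$ and a single induction for $\cG_{(01)^\infty}$ suffices.
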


\begin{proof}
For any $k\ge1$ let $V_k$ denote the set of all binary words of length 
$k$.  By Lemma \ref{transitive-on-levels}, the group $\cG_{(01)^\infty}$ 
acts 
transitively on $V_k$.  Let $\Gamma_k$ be the Schreier graph of the only 
orbit of that action.  We prove that $\Gamma_k$ is isomorphic to the graph 
$\Gamma_{\{a,c,d\}}\bigl(\sub^{k-1}(a)\bigr)$ by induction on $k$.  The 
case $k=1$ is easy.  Indeed, both $\Gamma_1$ and $\Gamma_{\{a,c,d\}}(a)$ 
have two vertices, $0$ and $1$.  Since $a(0)=1$, $a(1)=0$, $c(0)=d(0)=0$ 
and $c(1)=d(1)=1$, it follows that the identity map on the set $\{0,1\}$ is 
an isomorphism of the graph $\Gamma_1$ onto $\Gamma_{\{a,c,d\}}(a)$.

Now assume that the graph $\Gamma_k$ is isomorphic to
$\Gamma_{\{a,c,d\}}\bigl(\sub^{k-1}(a)\bigr)$ for some $k\ge1$.  We need to 
derive from this that the graph $\Gamma_{k+1}$ is isomorphic to
$\Gamma_{\{a,c,d\}}\bigl(\sub^k(a)\bigr)$.  Let $\Gamma$ be the graph 
obtained from $\Gamma_k$ by applying the edge substitution procedure 
described in Lemma \ref{edge-sub}.  Since $\Gamma_k$ is isomorphic to
$\Gamma_{\{a,c,d\}}\bigl(\sub^{k-1}(a)\bigr)$, it follows from Lemma 
\ref{edge-sub} that the graph $\Gamma$ is isomorphic to 
$\Gamma_{\{a,c,d\}}\bigl(\sub^k(a)\bigr)$.  It remains to prove that 
$\Gamma$ is also isomorphic to $\Gamma_{k+1}$.  Let $V$ be the vertex set 
of the graph $\Gamma$.  Then $V_k\subset V$ and any vertex of $\Gamma$ not 
in $V_k$ is joined to a vertex in $V_k$ by a unique edge, which has label 
$a$.  Since the generator $a$ fixes no nonempty word, the graph $\Gamma_k$ 
has no loops with label $a$.  It follows that the graph $\Gamma$ has twice 
as many vertices as $\Gamma_k$, and any vertex $v\in V_k$ is joined in 
$\Gamma$ by an edge to a unique vertex $v'\in V\setminus V_k$.  We let 
$f(v)=1v$ and $f(v')=0v$.  This defines a bijective map $f:V\to V_{k+1}$.  
We are going to show that $f$ is an isomorphism of the graph $\Gamma$ onto 
$\Gamma_{k+1}$.  After analyzing the structure of $\Gamma$, this task can 
be reformulated as follows.  For any $u,v\in V_k$ we have to show that 
$a(v)=u$ implies $c(0v)=0u$, $c(v)=u$ implies $d(1v)=1u$, and $d(v)=u$ 
implies $c(1v)=1u$.  Equivalently, $c(0v)=0a(v)$, $c(1v)=1d(v)$ and  
$d(1v)=1c(v)$ for all $v\in V_k$.  Also, we have to show that $a(1v)=0v$ 
and $d(0v)=0v$ for all $v\in V_k$.

Recall that $c=c_{(01)^\infty}$ and $d=d_{(01)^\infty}$.  It follows from 
the definition of the groups $\cG_\om$ in Section \ref{sect-growth} that 
$c_{(10)^\infty}=d_{(01)^\infty}=d$ and $d_{(10)^\infty}=c_{(01)^\infty} 
=c$.  Now for any $v\in V_k$ we obtain
\begin{eqnarray*}
& c(1v)=c_{(01)^\infty}(1v)=c_{0(10)^\infty}(1v)
=1c_{(10)^\infty}(v)=1d(v),\\
& d(1v)=d_{(01)^\infty}(1v)=d_{0(10)^\infty}(1v)
=1d_{(10)^\infty}(v)=1c(v),\\
& c(0v)=c_{(01)^\infty}(0v)=0a(v),\qquad
d(0v)=d_{(01)^\infty}(0v)=0v
\end{eqnarray*}
and, obviously, $a(1v)=0v$, just as required.
\end{proof}

\begin{lemma}\label{half-line-covers}
Suppose that for some infinite string $\xi$ and nonempty word $w$ without 
double letters over an alphabet $\cA$, the graph $\Gamma_{\cA}(\xi)$ covers 
the graph $\Gamma_{\cA}(w)$.  Then the string $\xi$ is of the form 
$wa_1\overleftarrow{w}a_2wa_3\overleftarrow{w}a_4\ldots$ or 
$\overleftarrow{w}a_1wa_2\overleftarrow{w}a_3wa_4\ldots$, where 
$\overleftarrow{w}$ is the word $w$ written backwards and each $a_i$ is a 
letter in $\cA$.
\end{lemma}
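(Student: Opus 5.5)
Let $f$ be a covering of the graph $\Gamma_{\cA}(w)$ by $\Gamma_{\cA}(\xi)$, with vertex sets $\{0,1,2,\dots\}$ and $\{0,1,\dots,n\}$, where $n$ is the length of $w=y_1\ldots y_n$, and write $\xi=x_1x_2x_3\ldots$. The plan is to show that $f$ moves along the segment $\{0,\dots,n\}$ like a ``zigzag'': it runs monotonically from one endpoint to the other, stays put for exactly one step, then runs back, and so on.

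The first step is a local observation. In a graph of $\Sch(\cI_{\cA},\cA)$ each vertex has exactly one edge with each label. Hence a covering maps the unique $z$-edge at a vertex $i$ onto the unique $z$-edge at $f(i)$. In particular, the non-loop edge joining $i-1$ to $i$ in $\Gamma_{\cA}(\xi)$, which has label $x_i$, is sent to the $x_i$-edge at $f(i-1)$. Therefore $f(i)$ is the other endpoint of that edge. This is $f(i-1)\pm1$ if that edge is a non-loop edge, and $f(i-1)$ itself if it is a loop. So $f$ is determined by $f(0)$ and $\xi$, and consecutive values of $f$ differ by at most one.

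The second step, which I expect to be the crux, is to pin down $f(0)$ and the turning behaviour. Vertex $0$ of $\Gamma_{\cA}(\xi)$ has loops with every label except $x_1$. Since $f$ is onto the edges and labels match, I would argue that $f(0)$ must be an endpoint $0$ or $n$. Suppose instead $f(0)=j$ with $0<j<n$. Then $y_j\ne y_{j+1}$, since $w$ has no double letters. Vertex $j$ has non-loop edges with two distinct labels, but vertex $0$ has only one non-loop edge. A loop at $0$ with label $y_j$ or $y_{j+1}$ must map to the edge at $j$ with that label, and that edge is not a loop, which is a contradiction. (This uses $n\ge1$ and that the loop at $0$ exists for one of these two labels.) The same argument applied at an interior vertex $i\ge1$ of $\Gamma_{\cA}(\xi)$ is what controls turning. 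At $i$ the two non-loop labels $x_i\ne x_{i+1}$ must go to the edges at $f(i)$ with those labels. If $f(i)$ is interior, these are the two non-loop edges at $f(i)$, so the path continues in the same direction: from $f(i-1)=f(i)-1$ we get $f(i+1)=f(i)+1$, because the label $x_i=y_{f(i)}$ forces $x_{i+1}=y_{f(i)+1}$. If $f(i)$ is an endpoint, say $n$, with $f(i-1)=n-1$, then $x_{i+1}$ differs from $y_n$, so its edge at $n$ is a loop and $f(i+1)=n$. Then at $i+1$ the label $x_{i+2}\ne x_{i+1}$ must be $y_n$, since otherwise $f$ would be stuck forever and could not be onto the edges. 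This gives $f(i+2)=n-1$. Stated briefly: every arrival at an endpoint is followed by exactly one loop step (reading an arbitrary letter $a_j$) and then a reversal.

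The last step is assembly. Starting at $f(0)\in\{0,n\}$, the path traverses the segment, reading $w$ if it starts at $0$ and $\overleftarrow{w}$ if it starts at $n$. It then reads one letter $a_1$ on a loop at the far endpoint and traverses back, reading the reversed word, and so on. By induction on the number of traversals, $\xi=wa_1\overleftarrow{w}a_2wa_3\ldots$ or $\xi=\overleftarrow{w}a_1wa_2\overleftarrow{w}\ldots$. The main obstacle is to make the endpoint and loop arguments rigorous, and in particular to treat the degenerate case $n=1$ uniformly.
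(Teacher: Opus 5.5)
Your strategy is essentially the paper's. Both arguments rest on the same fact: the edge map of a covering is a label-preserving bijection from the edges at $v$ onto the edges at $f(v)$, and it sends loops to loops. Both deduce from it that $f(0)\in\{0,n\}$, that $f$ keeps its direction at interior vertices of $\Gamma_{\cA}(w)$, and that at an endpoint it makes exactly one loop step and then turns back. The paper records this as a list of admissible triples $f(v-1),f(v),f(v+1)$, while you describe it as a walk driven by the letters of $\xi$.

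One step, however, is justified incorrectly. After $f(i-1)=n-1$ and $f(i)=f(i+1)=n$, you claim that $x_{i+2}\ne y_n$ would leave $f$ ``stuck forever'' and contradict surjectivity. Neither part holds.
\begin{itemize}
\item By your own walk rule, $x_{i+2}\ne y_n$ only gives $f(i+2)=n$. When $|\cA|\ge3$, a later letter could still equal $y_n$ and move $f$ back to $n-1$, so nothing forces $f$ to stay at $n$.
\item Even if $f$ did stay at $n$ from some point on, surjectivity would not be violated. The local bijections at the vertices of the first traversal of the segment already hit every vertex and every edge of $\Gamma_{\cA}(w)$.
\end{itemize}

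The obstruction is local, and it is the same loop argument you already used to locate $f(0)$. Suppose $f(i+1)=n$ and both $x_{i+1}$ and $x_{i+2}$ differ from $y_n$. Then vertex $i+1$ of $\Gamma_{\cA}(\xi)$ carries a loop labeled $y_n$. That loop would have to map to the $y_n$-edge at $n$, which is the non-loop edge joining $n-1$ and $n$; this is impossible. Hence $x_{i+2}=y_n$ and $f(i+2)=n-1$, which is exactly the paper's case ``$v>0$, $f(v)=n$''.

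The same argument at vertex $0$ also gives $x_1=y_1$ if $f(0)=0$ (resp.\ $x_1=y_n$ if $f(0)=n$). You use this tacitly when you say the walk starts by traversing the segment, so it should be stated. With these local facts the case $n=1$ needs no special treatment. There are no interior vertices, and the endpoint analysis with $n-1=0$ directly gives the pattern $f=0,1,1,0,0,1,1,\dots$ or its mirror.
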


\begin{proof}
Let $\xi=x_1x_2x_3\ldots$ and $w=y_1y_2\ldots y_n$, where each $x_i$ and 
$y_j$ is in $\cA$.  The vertex set $V$ of the graph $\Gamma_{\cA}(\xi)$ is 
$\{0,1,2,\dots\}$.  The vertex set $V_0$ of the graph $\Gamma_{\cA}(w)$ is 
$\{0,1,\dots,n\}$.  For any vertex $v$ of either of the graphs and any 
$z\in\cA$, the vertex $v$ is an endpoint of a unique edge with label $z$ in 
this graph.  We denote that edge by $e_{v,z}$ for the graph 
$\Gamma_{\cA}(\xi)$ and by $\eps_{v,z}$ for the graph $\Gamma_{\cA}(w)$.  
Further, for any integer $i\ge1$ let $\tilde e_i$ denote the only edge of 
$\Gamma_{\cA}(\xi)$ joining vertices $i$ and $i-1$.  In the case $i\le n$, 
let $\tilde\eps_i$ denote the only edge joining the same vertices in the 
graph $\Gamma_{\cA}(w)$.  Note that the edge $\tilde e_i$ has label $x_i$.  
Hence $\tilde e_i=e_{i,x_i}=e_{i-1,x_i}$.  Likewise, the edge 
$\tilde\eps_i$ has label $y_i$ so that 
$\tilde\eps_i=\eps_{i,y_i}=\eps_{i-1,y_i}$.

Suppose $f:V\to V_0$ is a covering of the graph $\Gamma_{\cA}(w)$ by the 
graph $\Gamma_{\cA}(\xi)$ and $h:E\to E_0$ is the associated map of the 
edge set $E$ of $\Gamma_{\cA}(\xi)$ onto the edge set $E_0$ of 
$\Gamma_{\cA}(w)$.  Then $h(e_{v,z})=\eps_{f(v),z}$ for all $v\in V$ and 
$z\in\cA$.  Hence for any $v\in V$ the map $h$ establishes a one-to-one 
correspondence between edges of $\Gamma_{\cA}(\xi)$ that have $v$ as an 
endpoint and edges of $\Gamma_{\cA}(w)$ that have $f(v)$ as an endpoint.

For any integer $i\ge1$, the edge $\tilde e_i$ joins vertices $i$ and $i-1$ 
in the graph $\Gamma_{\cA}(\xi)$.  Therefore the edge $h(\tilde e_i)$ joins 
vertices $f(i)$ and $f(i-1)$ in the graph $\Gamma_{\cA}(w)$.  It follows 
that $f(i-1)$ is either $f(i)-1$ or $f(i)$ or $f(i)+1$.  If $f(i-1)=f(i)-1$ 
then $h(\tilde e_i)=\tilde\eps_{f(i)}$.  Hence $x_i=y_{f(i)}$ since the 
edge $h(\tilde e_i)$ has the same label as $\tilde e_i$.  If 
$f(i-1)=f(i)+1$ then $h(\tilde e_i)=\tilde\eps_{f(i)+1}$ so that 
$x_i=y_{f(i)+1}$.

Any vertex $v\in V$ is an endpoint of one or two non-loop edges in $E$.  
Likewise, $f(v)$ is an endpoint of one or two non-loop edges in $E_0$.  We 
are going to explore all possibilities here.  First assume that 
$0<f(v)<n$.  Then $f(v)$ is an endpoint of two non-loop edges 
$\tilde\eps_{f(v)}$ and $\tilde\eps_{f(v)+1}$.  We have 
$h(e)=\tilde\eps_{f(v)}$ and $h(e')=\tilde\eps_{f(v)+1}$ for some edges 
$e,e'\in E$ that have $v$ as an endpoint.  Since $h$ clearly maps loops to 
loops, neither $e$ nor $e'$ is a loop.  It follows that $v>0$, one of the 
edges $e$ and $e'$ is $\tilde e_v$, and the other is $\tilde e_{v+1}$.  If 
$h(\tilde e_v)=\tilde\eps_{f(v)}$ and $h(\tilde 
e_{v+1})=\tilde\eps_{f(v)+1}$, then $f(v-1)=f(v)-1$ and $f(v+1)=f(v)+1$.  
If $h(\tilde e_v)=\tilde\eps_{f(v)+1}$ and $h(\tilde 
e_{v+1})=\tilde\eps_{f(v)}$, then $f(v-1)=f(v)+1$ and $f(v+1)=f(v)-1$.

Next consider the case $v=0$.  It follows from the above that $f(0)=0$ or 
$n$ so that $f(0)$ is an endpoint of a single non-loop edge $\eps\in E_0$.  
We have $h(e)=\eps$ for some edge $e\in E$ that has $0$ as an endpoint.  
The edge $e$ is not a loop.  Therefore $e=\tilde e_1$.  Then $f(1)$ is the 
other endpoint of the edge $\eps$.  That is, $f(1)=1$ if $f(0)=0$, and 
$f(1)=n-1$ if $f(0)=n$.

Next assume that $v>0$ and $f(v)=0$.  The edge $\tilde\eps_1$ is the only 
non-loop edge in $E_0$ with an endpoint $0$.  We have $h(e)=\tilde\eps_1$ 
for some edge $e\in E$ with an endpoint $v$.  The edge $e$ is not a loop.  
Therefore $e=\tilde e_v$ or $\tilde e_{v+1}$.  If $f(\tilde 
e_v)=\tilde\eps_1$ then $f(\tilde e_{v+1})\ne\tilde\eps_1$, which implies 
that $f(\tilde e_{v+1})$ is a loop at $0$.  Hence $f(v-1)=1$ and 
$f(v+1)=0$.  If $f(\tilde e_{v+1})=\tilde\eps_1$ then $f(\tilde e_v)$ is a 
loop so that $f(v-1)=0$ and $f(v+1)=1$.

The only remaining possibility is $v>0$ and $f(v)=n$.  Similarly to the 
previous case, here we obtain that one of the vertices $f(v-1)$ and 
$f(v+1)$ is $n-1$ while the other is $n$.

By the above any triple of consecutive elements of the sequence 
$f(0),f(1),f(2),\dots$ is of the form $i-1,i,i+1$ or $i+1,i,i-1$ or $0,0,1$ 
or $1,0,0$ or $n-1,n,n$ or $n,n,n-1$.  Besides, the pair $f(0),f(1)$ is 
either $0,1$ or $n,n-1$.  As a consequence, the sequence naturally splits 
into blocks of $n+1$ consecutive elements where each block is either 
$0,1,2,\dots,n-1,n$ or $n,n-1,\dots,2,1,0$ and adjacent blocks are never 
the same.  Any of these blocks is of the form
$f\bigl(m(n+1)\bigr),f\bigl(m(n+1)+1\bigr),\dots,f\bigl(m(n+1)+n\bigr)$ for 
some integer $m\ge0$.  If the block coincides with $0,1,2,\dots,n-1,n$ then 
it follows from the above that the subword $x_{m(n+1)}x_{m(n+1)+1}\ldots 
x_{m(n+1)+n-1}$ of the string $\xi$ coincides with the word $w$.  If the 
block coincides with $n,n-1,\dots,2,1,0$, it follows that the same subword 
coincides with the word $\overleftarrow{w}$.  We conclude that 
$\xi=wa_1\overleftarrow{w}a_2wa_3\overleftarrow{w}a_4\ldots$ or 
$\xi=\overleftarrow{w}a_1wa_2\overleftarrow{w}a_3wa_4\ldots$, where 
$a_i=x_{(n+1)i-1}$ for all $i\ge1$.
\end{proof}

\begin{figure}[t]
\centerline{\includegraphics[scale=0.95]{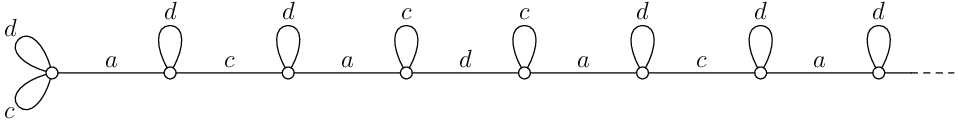}}
\caption{Graph $\Gamma_{\{a,c,d\}}(\xiGE)$.}
\label{fig-halfline}
\end{figure}

\begin{lemma}\label{Schr-half-line}
The Schreier graph of the action of the group $\cG_{(01)^\infty}$ on the 
orbit of the string $1^\infty$ is isomorphic to the graph 
$\Gamma_{\{a,c,d\}}(\xiGE)$.
\end{lemma}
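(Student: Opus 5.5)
The plan is to build the graph directly from the orbit of $1^\infty$, rather than trying to pass to a limit of the finite level graphs. Let $\Gamma_\infty$ denote the Schreier graph of the orbit $O$ of $1^\infty$ under $\cG_{(01)^\infty}$ relative to $\{a,c,d\}$. First observe that $O$ consists exactly of the strings cofinal with $1^\infty$, i.e. $O=\{w1^\infty\mid w\in\{0,1\}^*\}$. Every generator changes at most one symbol of a string, so $O$ is contained in this set. Conversely, by Lemma \ref{transitive-on-levels} some $g\in\cG_{(01)^\infty}$ sends $1^k$ to any word $w$ of length $k$. One has to check that such a $g$ can be chosen to act on $1^k1^\infty$ without touching the tail. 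I expect this to follow from the recursion, since $b_\om,c_\om,d_\om$ act on $1\xi$ by acting on $\xi$, and the argument of Lemma \ref{transitive-on-levels} can be rerun directly on cofinal strings.

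Next, the map $\pi_k:w1^\infty\mapsto w$ for $|w|\ge k$ (prefix of length $k$) is a covering of the level graph $\Gamma_k$ by $\Gamma_\infty$, because the action on prefixes is induced by the action on $\{0,1\}^{\bN}$. By Lemma \ref{Schr-level}, $\Gamma_k\cong\Gamma_{\{a,c,d\}}(\sub^{k-1}(a))$. The vertex $1^\infty$ has loops labelled $c$ and $d$, since $1^\infty$ is fixed by $c$ and $d$, and a non-loop edge labelled $a$. I will show that $\Gamma_\infty$ is a half-line, i.e. isomorphic to $\Gamma_{\{a,c,d\}}(\eta)$ for some infinite string $\eta$ with no double letters. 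Every vertex has degree at most two in non-loop edges. Such a connected graph is a finite path, a cycle, a half-line or a line. The first two are excluded because $O$ is infinite and, for each $k$, the projection to $\Gamma_k$ is a covering. The line is excluded because $1^\infty$ has only one non-loop edge. Reading labels along the half-line from $1^\infty$ defines $\eta=y_1y_2\ldots$.

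The key step is to identify $\eta=\xiGE$. For each $k$, Lemma \ref{half-line-covers} applied to the covering $\Gamma_{\{a,c,d\}}(\eta)\to\Gamma_{\{a,c,d\}}(\sub^{k-1}(a))$ says that $\eta=w\,a_1\overleftarrow{w}\,a_2\ldots$ or $\overleftarrow{w}\,a_1w\ldots$, where $w=\sub^{k-1}(a)$. Since $\sub^{k-1}(a)$ is a palindrome (as noted in the proof of Proposition \ref{3-subshifts-Cantor}), $\overleftarrow{w}=w$ in either case. So for every $k$ the word $\sub^{k-1}(a)$ is a prefix of $\eta$. As these prefixes have unbounded length and are prefixes of $\xiGE$, we get $\eta=\xiGE$. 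The remaining loops then match automatically, because both graphs lie in $\Sch(\cI_{\{a,c,d\}},\{a,c,d\})$; this is the same argument as at the end of Lemma \ref{edge-sub}.

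I expect the main obstacle to be the half-line structure claim, in particular excluding cycles and confirming that $O$ is infinite and exactly the cofinal class. If the cofinality argument becomes messy, the fallback is to show only that $O$ is infinite, which follows from density of orbits (Lemma \ref{density-of-orbits}) combined with the coverings, since a finite orbit cannot cover $\Gamma_k$ for large $k$. The half-line shape then follows as above.
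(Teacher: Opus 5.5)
Your proposal is correct and is essentially the paper's proof: you show that the orbit graph is a half-line rooted at $1^\infty$, then use the prefix coverings of the level graphs $\Gamma_k$, Lemma \ref{half-line-covers} and the palindromicity of $\sub^{k-1}(a)$ to force $\eta=\xiGE$. The one step you assert without justification, that every vertex has at most two non-loop edges, holds because each $\zeta=1^k0x\eta'\ne1^\infty$ is fixed by $d$ ($k$ even) or by $c$ ($k$ odd), since $(01)^\infty$ contains no $2$; the paper instead computes the neighbours $a(\zeta)$ and $cd(\zeta)$ explicitly as distinct strings, which gives the half-line shape and the infiniteness of the orbit directly, so your cofinality and density detour is not needed.
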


\begin{proof}
Let $O$ be the orbit of $1^\infty$ under the action of the group 
$\cG_{(01)^\infty}$ and $\Gamma$ be the Schreier graph of that orbit 
relative to the generating set $\{a,c,d\}$.  We have 
$a(1^\infty)=01^\infty$ and $c(1^\infty)=d(1^\infty)=1^\infty$.  Hence 
$1^\infty$ is joined to $01^\infty$ in the graph $\Gamma$ by an edge with 
label $a$, and there are two loops at $1^\infty$ with labels $c$ and $d$.  
Any string $\zeta\in O$ different from $1^\infty$ can be written as 
$1^k0\eta$ for some $k\ge0$ and $\eta\in\{0,1\}^{\bN}$.  Recall that 
$c=c_{(01)^\infty}$ and $d=d_{(01)^\infty}$.  If $k$ even then 
$c(\zeta)=1^k0a(\eta)$ and $d(\zeta)=\zeta$.  If $k$ odd then 
$c(\zeta)=\zeta$ and $d(\zeta)=1^k0a(\eta)$.  Since $cd=dc$, in either case 
we have $cd(\zeta)=1^k0a(\eta)$.  Note that $\zeta$, $a(\zeta)$ and 
$cd(\zeta)$ are three different strings.  Hence the vertex $\zeta$ of the 
graph $\Gamma$ is joined to $a(\zeta)$ by an edge with label $a$, to 
$cd(\zeta)$ by an edge with label $c$ or $d$, and to itself by one loop.

We obtained that every vertex $\zeta\ne1^\infty$ of the Schreier graph 
$\Gamma$ is joined by edges to two other vertices while $1^\infty$ is 
joined by an edge to only one other vertex.  Besides, $\Gamma$ has no 
multiple non-loop edges.  Since the graph $\Gamma$ is connected, it follows 
that it is isomorphic to the graph $\Gamma_{\{a,c,d\}}(\xi)$ for some 
infinite string $\xi\in\{a,c,d\}^{\bN}$.  Moreover, the isomorphism 
$f:\{0,1,2,\dots\}\to O$ of $\Gamma_{\{a,c,d\}}(\xi)$ onto $\Gamma$ is 
unique and satisfies $f(0)=1^\infty$.  To be precise, it follows from the 
above that $f(2m)=(cda)^m(1^\infty)$ and $f(2m+1)=a(cda)^m(1^\infty)$ for 
all integers $m\ge0$.  Now it remains to show that $\xi=\xiGE$.

For any $k\ge1$ let $V_k$ denote the set of all binary words of length 
$k$.  The action of the group $\cG_{(01)^\infty}$ on $V_k$ is transitive 
due to Lemma \ref{transitive-on-levels}.  Let $\Gamma_k$ be the Schreier 
graph of the only orbit of that action.  Consider a map $f_k:O\to V_k$ that 
assigns to any infinite string in $O$ its prefix of length $k$.  As the 
orbit $O$ is dense in $\{0,1\}^{\bN}$ (due to Lemma 
\ref{density-of-orbits}), the map $f_k$ is onto.  Since the action of the 
group $\cG_{(01)^\infty}$ on $V_k$ is induced by its action on 
$\{0,1\}^{\bN}$, it follows that $f_k$ is a covering of the graph 
$\Gamma_k$ by the Schreier graph $\Gamma$ of the orbit $O$.  By Lemma 
\ref{Schr-level}, the graph $\Gamma_k$ is isomorphic to
$\Gamma_{\{a,c,d\}}\bigl(\sub^{k-1}(a)\bigr)$.  Since $\Gamma$ is 
isomorphic to the graph $\Gamma_{\{a,c,d\}}(\xi)$, we obtain that the 
latter covers the graph $\Gamma_{\{a,c,d\}}\bigl(\sub^{k-1}(a)\bigr)$.  
Then Lemma \ref{half-line-covers} implies that the string $\xi$ is of the 
form $wa_1\overleftarrow{w}a_2wa_3\overleftarrow{w}a_4\ldots$ or 
$\overleftarrow{w}a_1wa_2\overleftarrow{w}a_3wa_4\ldots$, where 
$w=\sub^{k-1}(a)$, $\overleftarrow{w}$ is $w$ written backwards, and each 
$a_i$ is a letter in $\{a,c,d\}$.  Note that $\overleftarrow{w}=w$ (as 
shown in the proof of Proposition \ref{3-subshifts-Cantor}).  We conclude 
that $w=\sub^{k-1}(a)$ is a prefix of the string $\xi$.  Thus $\xi=\xiGE$ 
since this is the only string that has each of the words $\sub^{k-1}(a)$, 
$k\ge1$ as a prefix.
\end{proof}

\begin{definition}\label{def-kernel}
Let $\Gamma\in\Sch(\cI_{\cA},\cA)$ and $V$ be the vertex set of the graph 
$\Gamma$.  Then $\Gamma=\Gamma_{\Sch}(\cI_{\cA},\cA;\al_\Gamma,V)$ for a 
unique transitive action $\al_\Gamma:\cI_{\cA}\curvearrowright V$.  The 
action $\al_\Gamma$ gives rise to a group homomorphism 
$\psi_\Gamma:\cI_{\cA}\to\mathrm{Sym}(V)$, where $\mathrm{Sym}(V)$ is the 
group of all invertible transformations of the set $V$.  Namely, 
$\psi_\Gamma(g)$ is how the element $g\in\cI_{\cA}$ acts on $V$.  We denote 
by $\cK(\Gamma)$ the \emph{kernel} of $\psi_\Gamma$.  The kernel consists 
of all elements of $\cI_{\cA}$ that act trivially on $V$ within the action 
$\al_\Gamma$.
\end{definition}

Let $G$ be a group generated by a finite set $A$ of involutions.  Suppose 
that the generators of $G$ are labeled by elements of an alphabet $\cA$: 
$A=\{g_z\mid z\in\cA\}$.  Then for any Schreier graph in $\Sch(G,A)$ we 
assume that edge labels are in $\cA$.

\begin{lemma}\label{factor-by-kernel}
Under the above assumptions, let $\Gamma=\Gamma_{\Sch}(G,A;\al,O)$ for some 
orbit $O$ of an action $\al$ of the group $G$.  Then 
$\Gamma\in\Sch(\cI_{\cA},\cA)$ and there exists a group homomorphism 
$\phi:G\to\cI_{\cA}/\cK(\Gamma)$ such that $\phi(g_z)=z\cK(\Gamma)$ for all 
$z\in\cA$.  In the case when the restriction of the action $\al$ to the 
orbit $O$ is faithful, $\phi$ is an isomorphism.
\end{lemma}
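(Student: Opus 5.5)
Since every generator $g_z$ is an involution, the remark after Definition \ref{def-Schr} tells us that for $u,v\in O$ the relations $g_zu=v$ and $g_zv=u$ are equivalent. So in $\Gamma$ every vertex is an endpoint of exactly one edge labeled $z$, namely the edge joining $v$ to $g_zv$, which is a loop if $g_zv=v$. The graph $\Gamma$ is connected and all its labels lie in $\cA$. By Lemma \ref{Sch-I_A}, therefore, $\Gamma\in\Sch(\cI_{\cA},\cA)$. From the proof of that lemma, the action $\al_\Gamma$ of $\cI_{\cA}$ on $O$ is the one in which $z$ acts as $v\mapsto g_zv$. Hence $\psi_\Gamma(z)$ equals the restriction of $g_z$ to $O$.

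Next I will compare the two actions through the free group on the labels. The universal property of $\cI_{\cA}$, together with $g_z^2=1$, gives an epimorphism $\pi:\cI_{\cA}\to G$ with $\pi(z)=g_z$. Let $\rho:G\to\mathrm{Sym}(O)$ be the restriction of $\al$ to $O$. Then $\rho\pi$ and $\psi_\Gamma$ agree on every generator $z$, so $\rho\pi=\psi_\Gamma$. It follows that $\cK(\Gamma)=\ker(\rho\pi)=\pi^{-1}(\ker\rho)$, and this contains $\ker\pi$.

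Now I define $\phi:G\to\cI_{\cA}/\cK(\Gamma)$ by $\phi(\pi(w))=w\cK(\Gamma)$. This is well defined because $\ker\pi\subset\cK(\Gamma)$. It is a homomorphism because $\pi$ is onto and the formula is multiplicative. By construction $\phi(g_z)=z\cK(\Gamma)$, and $\phi$ is surjective.

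It remains to handle the faithful case. Suppose $\phi(g)$ is trivial and write $g=\pi(w)$. Then $w\in\cK(\Gamma)$, so $\rho(g)=\psi_\Gamma(w)=\id$. If $\rho$ is faithful, this forces $g=1$, so $\phi$ is injective and hence an isomorphism. The only delicate point in the whole argument is the identity $\rho\pi=\psi_\Gamma$, and that is exactly where the involution hypothesis is used, to recover the action of each $g_z$ from the undirected edges of $\Gamma$.
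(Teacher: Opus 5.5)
Your proof is correct and follows essentially the paper's route. Both arguments pull the orbit representation (your $\rho$, the paper's $\psi_O$) back along the epimorphism $\pi:\cI_{\cA}\to G$, identify $\cK(\Gamma)$ with the kernel of the composition, and factor; your $\phi(\pi(w))=w\cK(\Gamma)$ is the same map the paper obtains from the first isomorphism theorem. The only cosmetic difference is that you verify $\Gamma\in\Sch(\cI_{\cA},\cA)$ through the three conditions of Lemma~\ref{Sch-I_A}, whereas the paper directly observes that $\Gamma$ is the Schreier graph of the pulled-back action of $\cI_{\cA}$ on $O$.
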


\begin{proof}
Since the group $G$ is generated by involutions $g_z$, $z\in\cA$, there 
exists a group homomorphism $\pi:\cI_{\cA}\to G$ such that $\pi(z)=g_z$ for 
all $z\in\cA$.  This homomorphism is onto.  The restriction of the action 
$\al$ to the orbit $O$ gives rise to a group homomorphism 
$\psi_O:G\to\mathrm{Sym}(O)$.  Then the composition $\psi=\psi_O\pi$ is a 
homomorphism of the group $\cI_{\cA}$ to $\mathrm{Sym}(O)$.  The 
homomorphism $\psi$ induces an action $\beta:\cI_{\cA}\curvearrowright O$ 
given by $gv=\bigl(\psi(g)\bigr)(v)$ for all $g\in\cI_{\cA}$ and $v\in O$.  
Since $O$ is an orbit of the action $\al$, the natural action of the group 
$\psi_O(G)$ on $O$ is transitive.  As the map $\pi$ is onto, we have 
$\psi(\cI_{\cA})=\psi_O(G)$.  Hence the action $\beta$ is transitive as 
well.

Since $\psi(z)=\psi_O(g_z)$ for all $z\in\cA$, it follows that the graph 
$\Gamma=\Gamma_{\Sch}(G,A;\al,O)$ is also the Schreier graph 
$\Gamma_{\Sch}(\cI_{\cA},\cA;\beta,O)$.  In particular, 
$\Gamma\in\Sch(\cI_{\cA},\cA)$.  Besides, $\cK(\Gamma)$ is the kernel of 
the homomorphism $\psi$.  Therefore $\psi$ factors to a group isomorphism 
$\rho:\cI_{\cA}/\cK(\Gamma)\to\psi(\cI_{\cA})$ such that 
$\rho\bigl(h\cK(\Gamma)\bigr)=\psi(h)$ for all $h\in\cI_{\cA}$.  As 
$\psi(\cI_{\cA})=\psi_O(G)$, the composition $\phi=\rho^{-1}\psi_O$ is a 
well-defined map of $G$ to $\cI_{\cA}/\cK(\Gamma)$.  Since both $\psi_O$ 
and $\rho$ are group homomorphisms, so is $\phi$.  By construction,
$\phi(g_z)=\rho^{-1}(\psi(z))=z\cK(\Gamma)$ for all $z\in\cA$.

Note that the homomorphism $\phi$ is always onto.  In the case when the 
restriction of the action $\al$ to the orbit $O$ is faithful, the 
homomorphism $\psi_O$ is one-to-one.  Then $\phi$ is also one-to-one, which 
implies that it is an isomorphism.
\end{proof}

\begin{lemma}\label{kernel-cover}
Suppose $\Gamma_1,\Gamma_2\in\Sch(\cI_{\cA},\cA)$ are graphs such that 
$\Gamma_1$ covers $\Gamma_2$.  Then $\cK(\Gamma_1)\subset\cK(\Gamma_2)$.
\end{lemma}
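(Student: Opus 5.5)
The plan is to show that the covering map intertwines the two actions of $\cI_{\cA}$, so that anything acting trivially upstairs acts trivially downstairs. Let $V_1,V_2$ be the vertex sets and $f:V_1\to V_2$ the covering, with associated edge map $h$. By Definition \ref{def-kernel}, each $\Gamma_i$ is the Schreier graph of a unique transitive action $\al_{\Gamma_i}$ of $\cI_{\cA}$ on $V_i$. In that action, a generator $z\in\cA$ sends a vertex $v$ to the other endpoint of the unique edge at $v$ with label $z$ (Lemma \ref{Sch-I_A}).

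The first step is to prove $f(zv)=zf(v)$ for all $v\in V_1$ and $z\in\cA$. Take the unique edge $e$ with label $z$ at $v$; its endpoints are $v$ and $zv$. Then $h(e)$ has label $z$ and endpoints $f(v)$ and $f(zv)$. By the uniqueness condition of Lemma \ref{Sch-I_A} in $\Gamma_2$, $h(e)$ is the only $z$-edge at $f(v)$, so its other endpoint is $zf(v)$. Hence $f(zv)=zf(v)$. A loop in $\Gamma_1$ maps to an edge whose endpoints coincide, so it is a loop in $\Gamma_2$, and the formula still holds in that case.

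Next, induction on word length extends the identity to all of the group: $f(gv)=gf(v)$ for every $g\in\cI_{\cA}$ and $v\in V_1$. Every element of $\cI_{\cA}$ is a product of generators, since each generator is its own inverse, so the induction only needs the generator case.

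Finally, let $g\in\cK(\Gamma_1)$ and $u\in V_2$. Since $f$ is onto, $u=f(v)$ for some $v\in V_1$, and
\[ gu=gf(v)=f(gv)=f(v)=u. \]
So $g$ acts trivially on $V_2$, which means $g\in\cK(\Gamma_2)$. The only step needing care is the first one: the labeled-edge compatibility of the covering must actually pin down where the $z$-edge goes, and that is exactly what the uniqueness condition of Lemma \ref{Sch-I_A} provides.
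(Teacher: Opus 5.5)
Your proposal is correct and follows essentially the same route as the paper: intertwine the two actions on generators via the covering, extend to all of $\cI_{\cA}$ (the paper uses a subgroup-closure argument where you induct on word length, which is equivalent), and then use surjectivity of $f$. Your proof is also more explicit than the paper's about why the labeled-edge condition forces $f(zv)=zf(v)$, namely the uniqueness of the $z$-edge at each vertex from Lemma \ref{Sch-I_A}.
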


\begin{proof}
Let $V_1$ be the vertex set of $\Gamma_1$ and $V_2$ be the vertex set of 
$\Gamma_2$.  There exist transitive actions 
$\al_1:\cI_{\cA}\curvearrowright V_1$ and $\al_2:\cI_{\cA}\curvearrowright 
V_2$ such that $\Gamma_1=\Gamma_{\Sch}(\cI_{\cA},\cA;\al_1,V_1)$ and 
$\Gamma_2=\Gamma_{\Sch}(\cI_{\cA},\cA;\al_2,V_2)$.  The actions give rise 
to group homomorphisms $\psi_1:\cI_{\cA}\to\mathrm{Sym}(V_1)$ and 
$\psi_2:\cI_{\cA}\to\mathrm{Sym}(V_2)$.  Then $\cK(\Gamma_1)$ is the kernel 
of $\psi_1$ and $\cK(\Gamma_2)$ is the kernel of $\psi_2$.  Suppose 
$f:V_1\to V_2$ is a covering of the graph $\Gamma_2$ by the graph 
$\Gamma_1$.  Then the equality $\bigl(\psi_1(z)\bigr)(v)=u$ implies 
$\bigl(\psi_2(z)\bigr)(f(v))=f(u)$ for all $u,v\in V_1$ and $z\in\cA$.  
Equivalently, $\psi_2(z)f=f\psi_1(z)$ for all $z\in\cA$.  It is easy to 
verify that the set of all elements $g\in\cI_{\cA}$ satisfying the relation 
$\psi_2(g)f=f\psi_1(g)$ is a subgroup.  Since every generator $z\in\cA$ of 
the group $\cI_{\cA}$ belongs to this set, we conclude that 
$\psi_2(g)f=f\psi_1(g)$ for all $g\in\cI_{\cA}$.

If $g\in\cK(\Gamma_1)$ then $\psi_1(g)$ is the identity map on $V_1$.  
Hence $\psi_2(g)f=f\psi_1(g)=f$.  Since the map $f$ is onto, it follows 
that $\psi_2(g)$ is the identity map on $V_2$.  Equivalently, 
$g\in\cK(\Gamma_2)$.  Thus $\cK(\Gamma_1)\subset\cK(\Gamma_2)$.
\end{proof}

\begin{figure}[t]
\centerline{\includegraphics[scale=1]{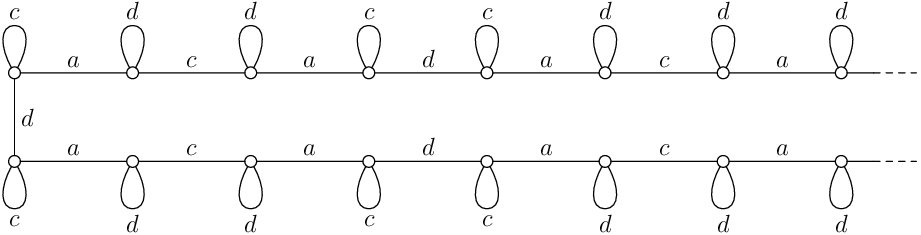}}
\caption{Graph $\Gamma_{\{a,c,d\}} 
\bigl(\protect\overleftarrow{\xiGE}d.\xiGE\bigr)$.}
\label{fig-line}
\end{figure}

\begin{lemma}\label{Sch-line-halfline}
Let $\xi$ be an infinite string without double letters over an alphabet 
$\cA$, $x$ be a letter in $\cA$ different from the first letter of $\xi$, 
and $\overleftarrow{\xi}$ be the left-infinite string obtained by writing 
$\xi$ backwards.  Then 
$\cK\bigl(\Gamma_{\cA}\bigl(\overleftarrow{\xi}\!x.\xi\bigr)\bigr) 
\subset\cK\bigl(\Gamma_{\cA}(\xi)\bigr)$.  Moreover, if 
$\al:\cI_{\cA}\curvearrowright\bZ$ is the transitive action such that 
$\Gamma_{\Sch}(\cI_{\cA},\cA;\al,\bZ)= 
\Gamma_{\cA}\bigl(\overleftarrow{\xi}\!x.\xi\bigr)$, then every element of 
$\cK\bigl(\Gamma_{\cA}(\xi)\bigr)$ moves at most finitely many points 
within that action.
\end{lemma}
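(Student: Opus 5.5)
Write $\Gamma=\Gamma_{\cA}(\xi)$ and $\Lambda=\Gamma_{\cA}\bigl(\overleftarrow{\xi}\!x.\xi\bigr)$. The plan is to use the explicit linear structure of $\Lambda$. Its vertex set is $\bZ$. The non-loop edges to the right of $0$ reproduce $\Gamma$ exactly: the edge from $i-1$ to $i$ for $i\ge1$ carries the $i$-th letter of $\xi$. The edge joining $-1$ and $0$ carries $x$. Reflecting, the edge joining $-i$ and $-i-1$ for $i\ge1$ carries the $i$-th letter of $\xi$.

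\emph{First claim.} The map $f:\bZ\to\{0,1,2,\dots\}$ with $f(i)=i$ for $i\ge0$ and $f(i)=-1-i$ for $i<0$ should be a covering of $\Gamma$ by $\Lambda$. The edge $\{-1,0\}$ with label $x$ folds onto a loop with label $x$ at $0$ in $\Gamma$. Such a loop exists because $x$ differs from the first letter of $\xi$, so the only non-loop edge at $0$ has a different label. All other edges map labelwise to edges of $\Gamma$ by construction. Loops map to loops: in both graphs, a vertex has a $z$-loop exactly when it has no non-loop $z$-edge, and $f$ matches the non-loop edge sets at $i$ and $f(i)$ bijectively by label. Lemma \ref{kernel-cover} then gives $\cK(\Lambda)\subset\cK(\Gamma)$.

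\emph{Second claim.} Let $g\in\cK(\Gamma)$, written as a word of length $m$ in $\cA$. Under $\al$, a point $i$ moves by at most one step per letter. So if $|i|>m$ (with some margin, say $i\ge m$ or $i<-m$), the trajectory of $i$ under the successive letters of $g$ stays entirely in $\{j\ge0\}$ or entirely in $\{j<0\}$. On each of these half-lines, $f$ is a label-preserving isomorphism onto $\Gamma$: on $\{j\ge0\}$ it is the identity, and on the negative side it is the reflection. Hence along such a trajectory, $f$ conjugates the action of each letter in $\Lambda$ to its action in $\Gamma$, that is, $f(g\cdot i)=g\cdot f(i)$ with $f$ injective on the half-line containing the trajectory. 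Since $g$ fixes $f(i)$ in $\Gamma$, it follows that $g\cdot i=i$. Therefore $g$ can move only points in the finite window $\{-m-1,\dots,m\}$.

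\emph{Main obstacle.} The delicate point is the second claim. One has to check carefully that a trajectory starting far from the fold never crosses the edge $\{-1,0\}$. One also has to check that the local action agrees step by step, including loop steps, which is the same loop/non-loop correspondence used in the first claim. This is a routine induction on the length of the word, so I expect no real difficulty beyond bookkeeping.
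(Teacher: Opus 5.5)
Your proposal is correct and is essentially the paper's proof: the same folding map $f$ is shown to be a covering, Lemma \ref{kernel-cover} gives the inclusion, and the second part rests on two facts, namely that $g$ displaces points by at most $|g|$ and that $f$ is injective on each side of the fold. The paper phrases this via fibres ($g$ must fix or swap $v$ and $-1-v$, which lie at distance $2v+1$), and the intertwining $\psi_\beta(g)f=f\psi_\alpha(g)$ already obtained from the covering makes your step-by-step induction along trajectories unnecessary.
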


\begin{proof}
Since the string $\xi$ has no double letters and $x$ is different from the 
first letter of $\xi$, the bi-infinite string $\overleftarrow{\xi}\!x.\xi$ 
also has no double letters.  Hence the graph 
$\Gamma_{\cA}\bigl(\overleftarrow{\xi}\!x.\xi\bigr)$ is defined.  The 
graphs $\Gamma_{\cA}\bigl(\overleftarrow{\xi}\!x.\xi\bigr)$ and 
$\Gamma_{\cA}(\xi)$ both belong to $\Sch(\cI_{\cA},\cA)$.  Therefore there 
exist unique transitive actions $\al:\cI_{\cA}\curvearrowright\bZ$ and 
$\beta:\cI_{\cA}\curvearrowright V$, where $V=\{0,1,2\dots\}$, such that 
$\Gamma_{\cA}\bigl(\overleftarrow{\xi}\!x.\xi\bigr)= 
\Gamma_{\Sch}(\cI_{\cA},\cA;\al,\bZ)$ and 
$\Gamma_{\cA}(\xi)=\Gamma_{\Sch}(\cI_{\cA},\cA;\beta,V)$.  The actions give 
rise to group homomorphisms $\psi_\al:\cI_{\cA}\to\mathrm{Sym}(\bZ)$ and 
$\psi_\beta:\cI_{\cA}\to\mathrm{Sym}(V)$.  Then 
$\cK\bigl(\Gamma_{\cA}\bigl(\overleftarrow{\xi}\!x.\xi\bigr)\bigr)$ is the 
kernel of $\psi_\al$ and $\cK\bigl(\Gamma_{\cA}(\xi)\bigr)$ is the kernel 
of $\psi_\beta$.

Consider a map $f:\bZ\to V$ given by $f(i)=i$ if $i\ge0$ and $f(i)=-1-i$ if 
$i<0$.  It is easy to observe that $f$ is a covering of the graph 
$\Gamma_{\cA}(\xi)$ by the graph 
$\Gamma_{\cA}\bigl(\overleftarrow{\xi}\!x.\xi\bigr)$.  Then Lemma 
\ref{kernel-cover} implies that 
$\cK\bigl(\Gamma_{\cA}\bigl(\overleftarrow{\xi}\!x.\xi\bigr)\bigr) 
\subset\cK\bigl(\Gamma_{\cA}(\xi)\bigr)$.  Moreover, just as in the proof 
of Lemma \ref{kernel-cover}, it follows that $\psi_\beta(g)f=f\psi_\al(g)$ 
for all $g\in\cI_{\cA}$.

Suppose $g\in\cK\bigl(\Gamma_{\cA}(\xi)\bigr)$.  Then $f(gu)=f(u)$ for all 
$u\in\bZ$, where $gu=\bigl(\psi_\al(g)\bigr)(u)$.  Any $v\in V$ has exactly 
two pre-images under the map $f$: $v$ and $-1-v$.  Therefore within the 
action $\al$, the element $g$ either exchanges $v$ and $-1-v$, or else 
fixes them both.  Recall that the distance between vertices $v$ and $gv$ in 
the Schreier graph $\Gamma_{\Sch}(\cI_{\cA},\cA;\al,\bZ)$ cannot exceed the 
length $|g|$ of $g$ relative to the generating set $\cA$.  The distance 
from $v$ to $-1-v$ is clearly $2v+1$.  Hence $g$ fixes $v$ and $-1-v$ 
whenever $2v+1>|g|$.  This leaves only finitely many vertices in 
$\Gamma_{\Sch}(\cI_{\cA},\cA;\al,\bZ)$ that the element $g$ can possibly 
move within the action $\al$.
\end{proof}

\begin{lemma}\label{GE-line-halfline}
$\cK\bigl(\Gamma_{\{a,c,d\}}\bigl(\overleftarrow{\xiGE}c.\xiGE\bigr)\bigr)
=\cK\bigl(\Gamma_{\{a,c,d\}}\bigl(\overleftarrow{\xiGE}d.\xiGE\bigr)\bigr)
=\cK\bigl(\Gamma_{\{a,c,d\}}(\xiGE)\bigr)$.
\end{lemma}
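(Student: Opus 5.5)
One inclusion is already available: by Lemma \ref{Sch-line-halfline}, applied with $\xi=\xiGE$ and $x=c$ or $x=d$ (both differ from the first letter $a$ of $\xiGE$), we have $\cK\bigl(\Gamma_{\{a,c,d\}}\bigl(\overleftarrow{\xiGE}x.\xiGE\bigr)\bigr)\subset\cK\bigl(\Gamma_{\{a,c,d\}}(\xiGE)\bigr)$. So the plan is to prove the reverse inclusion. Take $g\in\cK\bigl(\Gamma_{\{a,c,d\}}(\xiGE)\bigr)$ and let $\al$ be the action of $\cI_{\{a,c,d\}}$ on $\bZ$ with Schreier graph $\Gamma_{\{a,c,d\}}\bigl(\overleftarrow{\xiGE}x.\xiGE\bigr)$. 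By the second part of Lemma \ref{Sch-line-halfline}, $g$ moves at most finitely many points under $\al$. We must show it moves none.

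The idea is to use the self-similarity coming from Lemma \ref{xiGE-examples}: both bi-infinite strings lie in $\Om(\xiGE)$, and by minimality (Proposition \ref{3-subshifts-minimal}, Lemma \ref{same-subwords}) every finite window of them also occurs in $\xiGE$ far from the origin. Concretely, suppose $g$ has length $|g|$ and moves some vertex $u\in\bZ$. The action of $g$ on $u$ depends only on the labels of edges within distance $|g|$ of $u$, that is, on the subword $y_{u-|g|+1}\ldots y_{u+|g|}$ of $\overleftarrow{\xiGE}x.\xiGE$. Call this word $w$ (of length $2|g|$).

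Next, find an occurrence of $w$ inside $\xiGE$ starting at some position $m>|g|$. In $\Gamma_{\{a,c,d\}}(\xiGE)$, the ball of radius $|g|$ around the corresponding vertex $u'$ is then a segment isomorphic, with labels, to the ball around $u$, and it does not reach the endpoint $0$. Hence $g$ moves $u'$ in the action on $\{0,1,2,\dots\}$, contradicting $g\in\cK\bigl(\Gamma_{\{a,c,d\}}(\xiGE)\bigr)$.

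Such an occurrence exists because $w$ is a subword of a string in $\Om(\xiGE)$, so by Lemma \ref{omega-xi} it occurs infinitely often in $\xiGE$. The occurrence can also be read in either direction: $\xiGE$ has palindromic prefixes $\sub^k(a)$, and the identification is by a labeled isomorphism of segments anyway. This gives $g\in\cK$ of both bi-infinite graphs, and all three kernels coincide.

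The main obstacle is the locality claim, namely that the action of a word of length $|g|$ at a vertex is determined by the labeled ball of radius $|g|$ and commutes with label-preserving isomorphisms of such balls. The statement about loops must be included here: a vertex has a loop labeled $z$ exactly when no non-loop edge at it has label $z$, so the loops are determined by the non-loop edges of the segment, provided the ball does not touch the boundary vertex $0$. Arranging $m>|g|$ secures this, and the rest is routine. In fact this argument makes the finite-support part of Lemma \ref{Sch-line-halfline} unnecessary.
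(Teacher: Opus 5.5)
Your proof is correct, but for the reverse inclusion it takes a different route from the paper's.

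\textbf{The paper's route.} It combines the finite-support part of Lemma \ref{Sch-line-halfline} with dynamics. It identifies $\Gamma_{\{a,c,d\}}\bigl(\overleftarrow{\xiGE}x.\xiGE\bigr)$ with the Schreier graph of the orbit of $\overleftarrow{\xiGE}x.\xiGE$ under $\langle\de_a,\de_c,\de_d\rangle$ (Lemma \ref{Schr-subshift}). By minimality this orbit is dense in the Cantor set $\Om(\xiGE)$. A homeomorphism that moves only finitely many points of a dense orbit in a space without isolated points must be the identity.

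\textbf{Your route.} You argue purely locally. For a word of length $n=|g|$, the displacement of $u$ depends only on the edges at the vertices $u-n+1,\dots,u+n-1$, that is, on the labels $y_{u-n+1}\ldots y_{u+n}$. The loops at the two extreme vertices $u\pm n$ are not determined by this window, but they are never used, since the path reaches those vertices only at its last step. So your phrase ``the balls are isomorphic with labels'' should be read in this slightly restricted sense. Moreover, any occurrence $x_m\ldots x_{m+2n-1}$ of the window in $\xiGE$ with $m\ge1$ already produces the vertex $u'=m+n-1$ of $\Gamma_{\{a,c,d\}}(\xiGE)$, which $g$ moves. Hence you need neither $m>|g|$ nor the ``infinitely often'' part of Lemma \ref{omega-xi}. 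You only need $\overleftarrow{\xiGE}x.\xiGE\in\Om(\xiGE)$, which is Lemma \ref{xiGE-examples}.

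\textbf{What each approach buys.} Your argument is more elementary and more general. It shows $\cK\bigl(\Gamma_{\cA}(\xi)\bigr)\subset\cK\bigl(\Gamma_{\cA}(\eta)\bigr)$ for every $\eta\in\Om(\xi)$ without double letters. It needs no minimality, no Cantor property, no topological full group, and no finite-support statement. The paper's route, however, establishes along the way that the orbit $O_2$ is dense and that its Schreier graph is isomorphic to $\Gamma_{\{a,c,d\}}\bigl(\overleftarrow{\xiGE}d.\xiGE\bigr)$. The proof of Proposition \ref{inj-homomorph} quotes both facts from this proof. With your version they would have to be proved there directly, which is easy from Lemma \ref{Schr-subshift} and Proposition \ref{3-subshifts-Cantor}.
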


\begin{proof}
Let $x$ be either of the letters $c$ and $d$.  The graph 
$\Gamma_{\{a,c,d\}}\bigl(\overleftarrow{\xiGE}x.\xiGE\bigr)$ is also the 
Schreier graph $\Gamma_{\Sch}(\cI_{\{a,c,d\}},\{a,c,d\};\al,\bZ)$ for some 
transitive action $\al:\cI_{\{a,c,d\}}\curvearrowright\bZ$.  Then 
$\cK\bigl(\Gamma_{\{a,c,d\}}\bigl(\overleftarrow{\xiGE}x.\xiGE\bigr)\bigr)$ 
is the kernel of the group homomorphism $\psi_\al:\cI_{\{a,c,d\}}\to 
\mathrm{Sym}(\bZ)$ associated to the action $\al$.  By Lemma 
\ref{Sch-line-halfline}, 
$\cK\bigl(\Gamma_{\{a,c,d\}}\bigl(\overleftarrow{\xiGE}x.\xiGE\bigr)\bigr) 
\subset\cK\bigl(\Gamma_{\{a,c,d\}}(\xiGE)\bigr)$ and, moreover, 
every element of $\cK\bigl(\Gamma_{\{a,c,d\}}(\xiGE)\bigr)$ moves at most 
finitely many points within the action $\al$.  To prove that 
$\cK\bigl(\Gamma_{\{a,c,d\}}\bigl(\overleftarrow{\xiGE}x.\xiGE\bigr)\bigr)= 
\cK\bigl(\Gamma_{\{a,c,d\}}(\xiGE)\bigr)$, it is enough to show that every 
element of the group $\cI_{\{a,c,d\}}$ that moves at most finitely many 
points within the action $\al$ is, in fact, acting trivially.

By Lemma \ref{xiGE-examples}, the bi-infinite string 
$\overleftarrow{\xiGE}x.\xiGE$ belongs to the set $\Om(\xiGE)$.  Let $G$ be 
the subgroup of the topological full group of the subshift 
$\si|\Omega(\xiGE)$ generated by the generalized $2$-cycles $\de_a$, 
$\de_c$ and $\de_d$.  Let $O$ be the orbit of 
$\overleftarrow{\xiGE}x.\xiGE$ under the action of $G$ and $\Gamma$ be the 
Schreier graph of that orbit relative to the generating set 
$\{\de_a,\de_c,\de_d\}$ (with edge labels in $\{a,c,d\}$).  Lemma 
\ref{Schr-subshift} implies that $O$ coincides with the orbit of 
$\overleftarrow{\xiGE}x.\xiGE$ under the action of the cyclic group 
generated by the subshift $\si|\Omega(\xiGE)$.  Since the subshift is a 
Cantor minimal system (due to Proposition \ref{3-subshifts-Cantor}), it 
follows that the orbit $O$ is dense in $\Om(\xiGE)$.  As $\Om(\xiGE)$ is a 
Cantor set, the orbit $O$ is infinite, which implies that the string 
$\overleftarrow{\xiGE}x.\xiGE$ is not periodic.  Then it follows from Lemma 
\ref{Schr-subshift} that the Schreier graph $\Gamma$ is isomorphic to 
$\Gamma_{\{a,c,d\}}\bigl(\overleftarrow{\xiGE}x.\xiGE\bigr)$.  Let 
$f:O\to\bZ$ be an isomorphism of $\Gamma$ onto 
$\Gamma_{\{a,c,d\}}\bigl(\overleftarrow{\xiGE}x.\xiGE\bigr)$.  Then the 
equality $\de_z(v)=u$ is equivalent to $\bigl(\psi_\al(z)\bigr)(f(v))=f(u)$ 
for all $u,v\in O$ and $z\in\{a,c,d\}$.  Consequently, everywhere on $O$ we 
have $\psi_\al(z)f=f\de_z$ for all $z\in\{a,c,d\}$.  Since the generators 
of the group $G$ are involutions, there exists a group homomorphism 
$\pi:\cI_{\{a,c,d\}}\to G$ such that $\pi(z)=\de_z$ for all 
$z\in\{a,c,d\}$.  It is easy to verify that the set of all elements 
$g\in\cI_{\cA}$ satisfying the relation $\psi_\al(g)f=f\pi(g)$ everywhere 
on $O$ is a subgroup.  Since every generator $z\in\cA$ of the group 
$\cI_{\cA}$ belongs to this set, we conclude that $\psi_\al(g)f=f\pi(g)$ on 
$O$ for all $g\in\cI_{\cA}$.

Suppose $g\in\cI_{\{a,c,d\}}$ is an element that moves at most finitely 
many points within the action $\al$.  Since $\psi_\al(g)f=f\pi(g)$ on $O$ 
and the map $f$ is bijective, it follows that $\pi(g)$ moves a point $v\in 
O$ if and only if $g$ moves $f(v)$ within the action $\al$.  Hence $\pi(g)$ 
moves at most finitely many points of the orbit $O$.  As $\Om(\xiGE)$ is a 
Cantor set, any dense subset of $\Om(\xiGE)$ remains dense after removing 
finitely many points from it.  Therefore the set of all points of the orbit 
$O$ fixed by $\pi(g)$ is dense in $\Om(\xiGE)$.  Since $\pi(g)$ is a 
homeomorphism of $\Om(\xiGE)$, it has to coincide with the identity map.  
Hence $g$ acts trivially on $\bZ$ within the action $\al$.
\end{proof}

Now we are ready to prove Proposition \ref{inj-homomorph}.

\begin{proof}[Proof of Proposition \ref{inj-homomorph}]
To distinguish the generators of the group $\cG_{(01)^\infty}$ from the 
generators of the group $\cI_{\{a,c,d\}}$, in this proof we use notation 
$\tilde a,\tilde c,\tilde d$ for the former ones.

Let $O_1$ denote the orbit of the infinite string $1^\infty$ under the 
natural action of the group $\cG_{(01)^\infty}$ on $\{0,1\}^{\bN}$.  Let 
$\Gamma_1$ be the Schreier graph of that orbit.  By Lemma 
\ref{factor-by-kernel}, there exists a group homomorphism 
$\phi_1:\cG_{(01)^\infty}\to\cI_{\{a,c,d\}}/\cK(\Gamma_1)$ such that 
$\phi_1(\tilde a)=a\cK(\Gamma_1)$, $\phi_1(\tilde c)=c\cK(\Gamma_1)$ and 
$\phi_1(\tilde d)=d\cK(\Gamma_1)$.  The orbit $O_1$ is dense in 
$\{0,1\}^{\bN}$ due to Lemma \ref{density-of-orbits}.  Since the group 
$\cG_{(01)^\infty}$ acts on $\{0,1\}^{\bN}$ by homeomorphisms, it follows 
that each element of $\cG_{(01)^\infty}$ is uniquely determined by its 
restriction to $O_1$.  In other words, the restriction of the natural 
action of $\cG_{(01)^\infty}$ to the orbit $O_1$ is faithful.  Then Lemma 
\ref{factor-by-kernel} implies that $\phi_1$ is an isomorphism.

Let $G$ be the subgroup of the topological full group 
$[[\si|\Omega(\xiGE)]]$ generated by the generalized $2$-cycles $\de_a$, 
$\de_c$ and $\de_d$.  By Lemma \ref{xiGE-examples}, the bi-infinite string 
$\overleftarrow{\xiGE}d.\xiGE$ belongs to $\Om(\xiGE)$.  Let $O_2$ denote 
the orbit of this string under the natural action of $G$ on $\Om(\xiGE)$.  
Let $\Gamma_2$ be the Schreier graph of the orbit $O_2$.  By Lemma 
\ref{factor-by-kernel}, there exists a group homomorphism 
$\phi_2:G\to\cI_{\{a,c,d\}}/\cK(\Gamma_2)$ such that 
$\phi_2(\de_a)=a\cK(\Gamma_2)$, $\phi_2(\de_c)=c\cK(\Gamma_2)$ and 
$\phi_2(\de_d)=d\cK(\Gamma_2)$.  It was shown in the proof of Lemma 
\ref{GE-line-halfline} that the orbit $O_2$ is dense in $\Om(\xiGE)$.  
Therefore each element of the group $G$ is uniquely determined by its 
restriction to $O_2$ so that the restriction of the natural action of $G$ 
to the orbit $O_2$ is faithful.  Then Lemma \ref{factor-by-kernel} implies 
that $\phi_2$ is an isomorphism.

By Lemma \ref{Schr-half-line}, the Schreier graph $\Gamma_1$ is isomorphic 
to $\Gamma_{\{a,c,d\}}(\xiGE)$.  It was shown in the proof of Lemma 
\ref{GE-line-halfline} that the Schreier graph $\Gamma_2$ is isomorphic to 
$\Gamma_{\{a,c,d\}}\bigl(\overleftarrow{\xiGE}d.\xiGE\bigr)$.  Since 
isomorphic graphs cover each other, it follows from Lemma 
\ref{kernel-cover} that $\cK(\Gamma_1)= 
\cK\bigl(\Gamma_{\{a,c,d\}}(\xiGE)\bigr)$ and $\cK(\Gamma_2)=
\cK\bigl(\Gamma_{\{a,c,d\}}\bigl(\overleftarrow{\xiGE}d.\xiGE\bigr)\bigr)$. 
Besides, $\cK\bigl(\Gamma_{\{a,c,d\}}(\xiGE)\bigr)= 
\cK\bigl(\Gamma_{\{a,c,d\}}\bigl(\overleftarrow{\xiGE}d.\xiGE\bigr)\bigr)$ 
due to Lemma \ref{GE-line-halfline}.  Therefore 
$\cK(\Gamma_1)=\cK(\Gamma_2)$ so that the factor groups 
$\cI_{\{a,c,d\}}/\cK(\Gamma_1)$ and $\cI_{\{a,c,d\}}/\cK(\Gamma_2)$ are the 
same.  Then the composition $h=\phi_2^{-1}\phi_1$ is a well-defined map of 
the group $\cG_{(01)^\infty}$ to $G$.  Since both $\phi_1$ and $\phi_2$ are 
group isomorphisms, so is $h$.  As $G$ is a subgroup of 
$[[\si|\Omega(\xiGE)]]$, we may regard $h$ as a one-to-one homomorphism of 
$\cG_{(01)^\infty}$ to $[[\si|\Omega(\xiGE)]]$.  By construction, $h(\tilde 
a)=\de_a$, $h(\tilde c)=\de_c$ and $h(\tilde d)=\de_d$.
\end{proof}

Now we are going to derive Theorems \ref{sub-main} and \ref{main} from 
Proposition \ref{inj-homomorph}.

\begin{proof}[Proof of Theorem \ref{sub-main}]
It follows from Theorem \ref{GE-growth} that the group $\cG_{(01)^\infty}$ 
has intermediate growth.  The group $\cG_{(01)^\infty}$ is generated by 
three involutions $a$, $c$ and $d$.  Proposition \ref{inj-homomorph} 
implies that $\cG_{(01)^\infty}$ is isomorphic to the subgroup of the 
topological full group $[[\si|\Omega(\xiGE)]]$ generated by involutions 
$\de_a$, $\de_c$ and $\de_d$.  Since isomorphic groups have the same 
growth, the theorem follows.
\end{proof}

\begin{proof}[Proof of Theorem \ref{main}]
By Proposition \ref{3-subshifts-factors}, the subshift 
$\si_{\{x,y\}}|\Om(\xipd)$ is a continuous factor of the subshift 
$\si_{\{0,1\}}|\Om(\xiTM)$ while the subshift $\si_{\{a,c,d\}}|\Om(\xiGE)$ 
is topologically conjugate to $\si_{\{x,y\}}|\Om(\xipd)$.  By Proposition 
\ref{3-subshifts-Cantor}, all three subshifts are Cantor minimal systems.  
In particular, all three are aperiodic.  Then it follows from Theorem 
\ref{TFG-embeds} that the topological full group 
$[[\si_{\{x,y\}}|\Om(\xipd)]]$ embeds into the topological full group 
$[[\si_{\{0,1\}}|\Om(\xiTM)]]$ while the
topological full group $[[\si_{\{a,c,d\}}|\Om(\xiGE)]]$ embeds into 
$[[\si_{\{x,y\}}|\Om(\xipd)]]$.  As a consequence, any subgroup of 
$[[\si_{\{a,c,d\}}|\Om(\xiGE)]]$ is isomorphic to a subgroup of 
$[[\si_{\{0,1\}}|\Om(\xiTM)]]$ and a subgroup of 
$[[\si_{\{x,y\}}|\Om(\xipd)]]$.  Proposition \ref{inj-homomorph} implies 
that the group $\cG_{(01)^\infty}$ embeds into the topological full group 
$[[\si|\Omega(\xiGE)]]$.  It follows that $\cG_{(01)^\infty}$ also embeds 
into $[[\si_{\{0,1\}}|\Om(\xiTM)]]$ and $[[\si_{\{x,y\}}|\Om(\xipd)]]$, the 
topological full groups of the substitution subshifts generated by the 
Thue-Morse substitution and the period doubling substitution.
\end{proof}

The group $\cG_{(01)^\infty}$ can be embedded into the topological full 
group $[[\si_{\{0,1\}}|\Om(\xiTM)]]$ of the Thue-Morse subshift in three 
steps.  First $\cG_{(01)^\infty}$ is embedded into the group 
$[[\si_{\{a,c,d\}}|\Om(\xiGE)]]$, then $[[\si_{\{a,c,d\}}|\Om(\xiGE)]]$ is 
embedded into (in fact, mapped isomorphically onto) the group 
$[[\si_{\{x,y\}}|\Om(\xipd)]]$, and finally $[[\si_{\{x,y\}}|\Om(\xipd)]]$ 
is embedded into $[[\si_{\{0,1\}}|\Om(\xiTM)]]$.  The first step is 
described explicitly by Proposition \ref{inj-homomorph}.  The other two 
steps can be described explicitly as well.  Each time the generators $a$, 
$c$ and $d$ of the group $\cG_{(01)^\infty}$ are mapped to generalized 
$2$-cycles of the form $\de_{U;S}$, where $S$ is an appropriate subshift.  
To describe the clopen sets $U$, we need another notation.  Given a 
nonempty word $w$ over an alphabet $\cA$, let $[.w]_{\cA}$ denote the set 
of all bi-infinite strings $\ldots x_{-1}x_0.x_1x_2\ldots$ in $\cA^{\bZ}$ 
such that $x_1x_2\ldots x_n=w$, where $n$ is the length of $w$.

\begin{proposition}\label{two-more-inj}
(i) There exists a one-to-one group homomorphism $h_1:\cG_{(01)^\infty}\to 
[[\si_{\{x,y\}}|\Om(\xipd)]]$ such that 
$h_1(z)=\de_{U_z\cap\Om(\xipd);S_1}$ for all $z\in\{a,c,d\}$, where 
$S_1=\si_{\{x,y\}}|\Om(\xipd)$, $U_a=[.xy]_{\{x,y\}}\cup [.xxxy]_{\{x,y\}}$,
$U_c=[.y]_{\{x,y\}}$ and $U_d=[.xxy]_{\{x,y\}}$.

(ii) There exists a one-to-one group homomorphism $h_2:\cG_{(01)^\infty}\to 
[[\si_{\{0,1\}}|\Om(\xiTM)]]$ such that 
$h_2(z)=\de_{W_z\cap\Om(\xiTM);S_2}$ for all $z\in\{a,c,d\}$, where 
$S_2=\si_{\{0,1\}}|\Om(\xiTM)$,
\begin{eqnarray*}
W_a &=& [.011]_{\{0,1\}}\cup [.100]_{\{0,1\}}\cup [.01011]_{\{0,1\}}\cup 
[.10100]_{\{0,1\}},\\
W_c &=& [.00]_{\{0,1\}}\cup [.11]_{\{0,1\}},\\
W_d &=& [.0100]_{\{0,1\}}\cup [.1011]_{\{0,1\}}.
\end{eqnarray*}
\end{proposition}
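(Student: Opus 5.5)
The plan is to transport the embedding $h$ of Proposition \ref{inj-homomorph} along the conjugacy $f_{\phi_2}$ and then along the factor map $f_{\phi_1}$, using the explicit form of the embedding $\mathcal{E}_f$ from Theorem \ref{main0}, i.e.\ $\Phi_f$ of Lemma \ref{group-embed}. The one general observation needed is how generalized $2$-cycles behave under these maps. Let $f:X_1\to X_2$ be onto with $fT_1=T_2f$, and let $\de_{U;T_2}$ be a generalized $2$-cycle. Its cocycle $\nu$ equals $1$ on $U$, $-1$ on $T_2(U)$ and $0$ elsewhere. Then $\nu f$ equals $1$ on $f^{-1}(U)$, $-1$ on $f^{-1}(T_2U)=T_1(f^{-1}(U))$ and $0$ elsewhere. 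Hence $\mathcal{E}_f(\de_{U;T_2})=\de_{f^{-1}(U);T_1}$, and this map is defined because $f^{-1}(U)$ is disjoint from $T_1(f^{-1}(U))$. Composing these embeddings with $h$, which is injective, gives injective homomorphisms $h_1$ and $h_2$ that send $z$ to $\de_{f^{-1}(U);S}$ for the corresponding $U$. It therefore suffices to compute the clopen sets and to check that they agree with $U_z\cap\Om(\xipd)$ and $W_z\cap\Om(\xiTM)$.

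For (i), I will use $g=(f_{\phi_2}|_{\Om(\xiGE)})^{-1}$. Since $f_{\phi_2}$ is a conjugacy, $g$ is also a conjugacy, and $g^{-1}([.z]\cap\Om(\xiGE))=f_{\phi_2}([.z]\cap\Om(\xiGE))$. Following the reconstruction in the proof of Lemma \ref{block-factor-2}, I need to determine from an image string $\om=\ldots z_0.z_1z_2\ldots\in\Om(\xipd)$ when the preimage has $z'_1=c$, $d$ or $a$.

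We have $z'_1=c$ exactly when $z_1=y$, which gives $U_c=[.y]$.

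If $z'_1=d$, then $z_1=x$ and the preimage has $a$ at positions $0$ and $2$, so $z'_3\in\{c,d\}$. From the structure of $\xiGE$ (Lemma \ref{xi-GE}), among $z'_1,z'_3$ exactly one is $c$, since the pattern along even positions is $c\,?\,c\,?$ with alternating $c,d$. Hence $z_3=y$, and in particular $z_2=x$ since $a\mapsto x$. This gives the condition $z_1z_2z_3=xxy$, i.e.\ $U_d=[.xxy]$.

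If $z'_1=a$, then $z_2\in\{x,y\}$ comes from $c$ or $d$: either $z_1z_2=xy$, or $z_1z_2z_3z_4=xxxy$ (the latter when $z'_2=d$ and $z'_4=c$). This gives $U_a$.

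One must check in each case that, restricted to $\Om(\xipd)$, the listed patterns are equivalent to the stated letter conditions. This is a finite check of admissible words of length at most $4$ in $\xipd$, for instance that $xxxx$ is forbidden. I will also use that the letters at odd-offset positions are $a$.

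For (ii), I will compose with $f_{\phi_1}$. We have $f_{\phi_1}^{-1}([.w])$ equal to the set of strings whose consecutive pairs match $w$ under $\phi_1$, where $x$ means ``different'' and $y$ means ``equal''. The pattern $[.y]$ becomes $[.00]\cup[.11]$. The pattern $[.xxy]$ becomes a word $u_1u_2u_3u_4$ with $u_1\ne u_2\ne u_3=u_4$, i.e.\ $0100$ or $1011$. Similarly $[.xy]$ gives $011,100$, and $[.xxxy]$ gives $01011,10100$. These match $W_a$, $W_c$, $W_d$ exactly.

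The main obstacle is step (i): correctly pinning down $f_{\phi_2}([.a])$ and $f_{\phi_2}([.d])$ as cylinder conditions in $\Om(\xipd)$, which requires care about which neighbouring $c/d$ letters are determined. I would verify this directly using Lemma \ref{xi-GE} on the positions $2^km$.
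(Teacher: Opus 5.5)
Your proposal is correct and is essentially the paper's proof. You push $\de_{U;T_2}$ forward to $\de_{f^{-1}(U);T_1}$, identify $f_{\phi_2}([.z]\cap\Om(\xiGE))$ using that every $d$ is followed by $ac$ and every $a$ by $c$ or $dac$, and pull the cylinders back through $\phi_1$. The paper gets the reverse inclusions in (i) for free: $[.c],[.ac],[.dac],[.adac]$ partition $\Om(\xiGE)$ and $[.y],[.xy],[.xxy],[.xxxy]$ partition $\Om(\xipd)$, so bijectivity of $f_{\phi_2}$ forces equality. Your direct two-way check via the $a$/non-$a$ alternation also works.

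One correction to your justification: the non-$a$ letters of $\xiGE$ do not alternate between $c$ and $d$ (e.g.\ $x_6=x_8=c$), so ``exactly one of $z'_1,z'_3$ is $c$'' is false in general. What you need is only that $d$ occurs at positions divisible by $4$, so the next non-$a$ letter sits at a position $\equiv 2\pmod 4$ and is therefore $c$.
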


\begin{proof}
Suppose $T_1:X_1\to X_1$ and $T_2:X_2\to X_2$ are aperiodic homeomorphisms 
of Cantor sets, and $f:X_1\to X_2$ is a continuous map that is onto and 
satisfies $fT_1=T_2f$.  By Theorem \ref{TFG-embeds}, there exists a 
one-to-one homomorphism $\Psi:[[T_2]]\to[[T_1]]$.  An explicit description 
of $\Psi$ can be extracted from Lemmas \ref{model-isomorphism} and 
\ref{group-embed}.  Namely, if $F\in[[T_2]]$ is a homeomorphism given by 
$F(x)=T_2^{\nu(x)}(x)$, $x\in X_2$, where $\nu:X_2\to\bZ$ is a continuous 
function, then its image $\Psi(F)$ is given by 
$\bigl(\Psi(F)\bigr)(x)=T_1^{\nu(f(x))}(x)$ for all $x\in X_1$.  In the 
case when $F=\de_{U;T_2}$ for some clopen set $U\subset X_2$, we obtain 
that $\Psi(F)=\de_{f^{-1}(U);T_1}$.

By Proposition \ref{inj-homomorph}, there is a one-to-one homomorphism 
$h:\cG_{(01)^\infty}\to[[\si_{\{a,c,d\}}|\Om(\xiGE)]]$ such that 
$h(z)=\de_z$ for all $z\in\{a,c,d\}$.  Recall that $\de_z$ is the short 
notation for the generalized $2$-cycle 
$\de_{[.z]_{\{a,c,d\}}\cap\Om(\xiGE);S}$, where 
$S=\si_{\{a,c,d\}}|\Om(\xiGE)$.  Let 
$f_{\phi_2}:\{a,c,d\}^{\bZ}\to\{x,y\}^{\bZ}$ be the block factor map 
considered in Lemma \ref{block-factor-2}.  In view of the lemma, the 
restriction of $f_{\phi_2}$ to the set $\Om(\xiGE)$ can be regarded a 
homeomorphism onto $\Om(\xipd)$.  Let $g_1$ be the inverse of that 
homeomorphism.  Then $g_1S_1=Sg_1$, where $S_1=\si_{\{x,y\}}|\Om(\xipd)$.
Hence $g_1$ induces a one-to-one homomorphism $\Psi_1:[[S]]\to[[S_1]]$ as 
described above.  Then $h_1=\Psi_1h$ is a one-to-one homomorphism of the 
group $\cG_{(01)^\infty}$ to $[[S_1]]$.  For any $z\in\{a,c,d\}$ we obtain 
that $h_1(z)=\Psi_1(\de_z)=\de_{U'_z;S_1}$, where $U'_z= 
g_1^{-1}\bigl([.z]_{\{a,c,d\}}\cap\Om(\xiGE)\bigr)
=f_{\phi_2}\bigl([.z]_{\{a,c,d\}}\cap\Om(\xiGE)\bigr)$.

It follows from the description of the string $\xiGE$ (Lemma \ref{xi-GE}) 
that any occurrence of the letter $d$ in $\xiGE$ is followed by $ac$.  
Hence $[.d]_{\{a,c,d\}}\cap\Om(\xiGE)=[.dac]_{\{a,c,d\}}\cap\Om(\xiGE)$.  
Any occurrence of the letter $a$ is followed by either $c$ or $dac$.  Hence 
the set $[.a]_{\{a,c,d\}}\cap\Om(\xiGE)$ is the union of 
$[.ac]_{\{a,c,d\}}\cap\Om(\xiGE)$ and $[.adac]_{\{a,c,d\}}\cap\Om(\xiGE)$.  
Recall that the $1$-block factor map $f_{\phi_2}$ is induced by the 
function $\phi_2:\{a,c,d\}\to\{x,y\}$ given by $\phi_2(a)=\phi_2(d)=x$ and 
$\phi_2(c)=y$.  Therefore $f_{\phi_2}\bigl([.c]_{\{a,c,d\}}\bigr)\subset 
[.y]_{\{x,y\}}$, $f_{\phi_2}\bigl([.ac]_{\{a,c,d\}}\bigr)\subset 
[.xy]_{\{x,y\}}$, $f_{\phi_2}\bigl([.dac]_{\{a,c,d\}}\bigr)\subset 
[.xxy]_{\{x,y\}}$ and $f_{\phi_2}\bigl([.adac]_{\{a,c,d\}}\bigr)\subset 
[.xxxy]_{\{x,y\}}$.  Note that $[.c]_{\{a,c,d\}}$, $[.ac]_{\{a,c,d\}}$, 
$[.dac]_{\{a,c,d\}}$ and $[.adac]_{\{a,c,d\}}$ are disjoint sets that cover 
$\Om(\xiGE)$.  As observed in the proof of Lemma \ref{block-factor-2}, the 
word $xxxx$ is not a subword of the string $\xipd$.  Hence the set 
$[.xxxx]_{\{x,y\}}$ is disjoint from $\Om(\xipd)$.  Consequently, 
$\Om(\xipd)$ is covered by disjoint sets $[.y]_{\{x,y\}}$, 
$[.xy]_{\{x,y\}}$, $[.xxy]_{\{x,y\}}$ and $[.xxxy]_{\{x,y\}}$.  Since the 
restriction of $f_{\phi_2}$ to $\Om(\xiGE)$ is a one-to-one map onto 
$\Om(\xipd)$, it follows that 
$f_{\phi_2}\bigl([.c]_{\{a,c,d\}}\cap\Om(\xiGE)\bigr)=
[.y]_{\{x,y\}}\cap\Om(\xipd)$, 
$f_{\phi_2}\bigl([.ac]_{\{a,c,d\}}\cap\Om(\xiGE)\bigr)= 
[.xy]_{\{x,y\}}\cap\Om(\xipd)$, 
$f_{\phi_2}\bigl([.dac]_{\{a,c,d\}}\cap\Om(\xiGE)\bigr)=
[.xxy]_{\{x,y\}}\cap\Om(\xipd)$ and 
$f_{\phi_2}\bigl([.adac]_{\{a,c,d\}}\cap\Om(\xiGE)\bigr)=
[.xxxy]_{\{x,y\}}\cap\Om(\xipd)$.  As a consequence, for any 
$z\in\{a,c,d\}$ we have 
$f_{\phi_2}\bigl([.z]_{\{a,c,d\}}\cap\Om(\xiGE)\bigr)=U_z\cap\Om(\xipd)$, 
where $U_z$ is as defined in the proposition.  This completes the proof of 
the statement (i).

We proceed to the statement (ii).  Let 
$f_{\phi_1}:\{0,1\}^{\bZ}\to\{x,y\}^{\bZ}$ be the block factor map 
considered in Lemma \ref{block-factor-1}.  The map $f_{\phi_1}$ is 
two-to-one when restricted to $\Om(\xiTM)$ and $f_{\phi_1}(\Om(\xiTM))= 
\Om(\xipd)$.  It follows from the proof of Lemma \ref{block-factor-1} that 
$f_{\phi_1}$ is two-to-one on $\{0,1\}^{\bZ}$.  As a consequence, 
$f_{\phi_1}^{-1}(\Om(\xipd))=\Om(\xiTM)$.  Let $g_2$ be the restriction of 
$f_{\phi_1}$ to $\Om(\xiTM)$.  Then $g_2S_2=S_1g_2$, where 
$S_2=\si_{\{0,1\}}|\Om(\xiTM)$.  Hence $g_2$ induces a one-to-one 
homomorphism $\Psi_2:[[S_1]]\to[[S_2]]$ as described above.  Then 
$h_2=\Psi_2h_1$ is a one-to-one homomorphism of the group 
$\cG_{(01)^\infty}$ to $[[S_2]]$.  For any $z\in\{a,c,d\}$ we have
$h_2(z)=\Psi_2\bigl(\de_{U_z\cap\Om(\xipd);S_1}\bigr)
=\de_{g_2^{-1}(U_z\cap\Om(\xipd));S_2}$.  Note that 
$g_2^{-1}(U_z\cap\Om(\xipd))=f_{\phi_1}^{-1}(U_z)\cap\Om(\xiTM)$ since 
$f_{\phi_1}^{-1}(\Om(\xipd))=\Om(\xiTM)$.  Recall that the $2$-block factor
map $f_{\phi_1}$ is induced by the function $\phi_1:\{0,1\}^2\to\{x,y\}$
given by $\phi_1(0,1)=\phi_1(1,0)=x$ and 
$\phi_1(0,0)=\phi_1(1,1)=y$.  We obtain that
\begin{align*}
&f_{\phi_1}^{-1}([.y]_{\{x,y\}}) = [.00]_{\{0,1\}}\cup [.11]_{\{0,1\}},\\
&f_{\phi_1}^{-1}([.xy]_{\{x,y\}}) = [.011]_{\{0,1\}}\cup 
[.100]_{\{0,1\}},\\
&f_{\phi_1}^{-1}([.xxy]_{\{x,y\}}) = [.0100]_{\{0,1\}}\cup
[.1011]_{\{0,1\}},\\
&f_{\phi_1}^{-1}([.xxxy]_{\{x,y\}}) = [.01011]_{\{0,1\}}\cup 
[.10100]_{\{0,1\}}.
\end{align*}
It follows that for any $z\in\{a,c,d\}$ we have $f_{\phi_1}^{-1}(U_z)=W_z$, 
where $W_z$ is as defined in the proposition.  This completes the proof of 
the statement (ii).
\end{proof}

\bigskip

{\sc
\begin{raggedright}
Department of Mathematics\\
Texas A\&M University\\
College Station, TX 77843--3368
\end{raggedright}
}

\end{document}